
\documentclass[conference,a4paper]{IEEEtran}

\addtolength{\topmargin}{9mm}

%
%
\usepackage[usenames,dvipsnames]{pstricks}
\usepackage{epsfig}
\usepackage{pst-grad} 
\usepackage{pst-plot} 
\usepackage{color}
\usepackage{graphicx}
\usepackage{subcaption}

\usepackage{pgf}
\usepackage{tikz}
\usetikzlibrary{shapes,external}
\tikzexternalize





%
%
\usepackage{graphicx}
\usepackage[usenames,dvipsnames]{pstricks}
\usepackage{epsfig}
\usepackage{pst-grad} 
\usepackage{pst-plot} 
\usepackage[space]{grffile} 
\usepackage{etoolbox} 
\makeatletter 
\patchcmd\Gread@eps{\@inputcheck#1 }{\@inputcheck"#1"\relax}{}{}
\makeatother

\usepackage{pst-all} 
\usepackage{mathbbol}
\usepackage[utf8]{inputenc} 
\usepackage[T1]{fontenc}
\usepackage{url}
\usepackage{ifthen}
\usepackage[cmex10]{amsmath} 
                          
\newcommand{\X}{\mathcal{X }}

\newcommand{\ML}{\textsf{ML} }
\newcommand{\CI}{\textsf{CI }}
\newcommand{\N}{\mathrm{N}}
\newcommand{\Ll}{\mathrm{L}}
\newcommand{\f}{\mathrm{f}}
\newcommand{\D}{\mathcal{D}}
\usepackage{amsthm}
\newtheorem{thm}{Theorem}
\newtheorem{lem}[thm]{Lemma}

\newtheorem{cor}[thm]{Corollary}
\newtheorem{dif}[thm]{Definition}


\interdisplaylinepenalty=2500 

\hyphenation{op-tical net-works semi-conduc-tor}

\begin{document}
\title{Information Theoretic Bounds on Optimal Worst-case Error in Binary Mixture Identification} 


\author{%
  \IEEEauthorblockN{Khashayar Gatmiry and Seyed Abolfazl Motahari}
  \IEEEauthorblockA{Computer Engineering Department\\
                    Sharif University of Technology, Tehran, Iran\\
                    Email: kgatmiry@ce.sharif.edu, motahari@sharif.edu}
}


\maketitle

\begin{abstract}
Identification of latent binary sequences from a pool of noisy observations has a wide range of applications in both statistical learning and population genetics. Each observed sequence is the result of passing one of the latent mother-sequences through a binary symmetric channel, which makes this configuration analogous to a special case of Bernoulli Mixture Models. This paper aims to attain an asymptotically tight upper-bound on the error of Maximum Likelihood mixture identification in such problems. The obtained results demonstrate fundamental guarantees on the inference accuracy of the optimal estimator. To this end, we set out to find the closest pair of discrete distributions with respect to the Chernoff Information measure.  We provide a novel technique to lower bound the Chernoff Information in an efficient way. We also show that a drastic phase transition occurs at noise level 0.25. Our findings reveal that the identification problem becomes much harder as the noise probability exceeds this threshold. 
\end{abstract}


\section{Introduction}

Identification of latent parameters of Bernoulli Mixture Models (\textsf{BMM}) has many applications in Statistical Learning and Bioinformatics. In Bioinformatics, next-generation sequencing technologies provide noisy observations of a vast number of sequences and the target is to find the unobserved and latent source sequences \cite{motahari2013information,motahari2013optimal}. In this paper, we aim at obtaining information theoretic bounds on reliable identification of such sources.

Learning parameters of a \textsf{BMM} is not always feasible as there exist district source parameters providing the same output model. The problem is known as identification of \textsf{BMM}s that has been addressed in several papers \cite{allman2009identifiability,gyllenberg1994non,najafi2017reliable,carreira2000practical}.

In this paper, bounding the  Maximum Likelihood (\textsf{ML}) estimator which yields the optimum decision making, we obtain several interesting results regarding identifiability of \textsf{BMM}s in our worse-case analysis.  First, we obtain asymptotically tight upper-bounds on the error of \ML estimator.  Second, we provide a systematic procedure which can be used to efficiently bound the Chernoff Information (\textsf{CI}) measure. Even though \CI is not analytically computable, the interesting structure of the distribution space leads to analytical closed forms for the minimum \CI distance in special regimes of the parameters, and near-optimal bounds for the other cases.  Finally, we demonstrate an astonishing phase transition in our worst-case analysis:  the closest pairs of sources that attain our bounds asymptotically have different characteristics depending on the noise rate. The threshold for the noise level is derived analytically which is  \%25. In Fig. \ref{fig:image1}, the upper bounds on \ML are drawn for different values of noise levels. As it can be seen, the two bounds cross at 0.25 revealing different worst-case scenarios for the two regimes.

 \begin{figure}[t]
   \centering
   \includegraphics[width=0.47\textwidth]{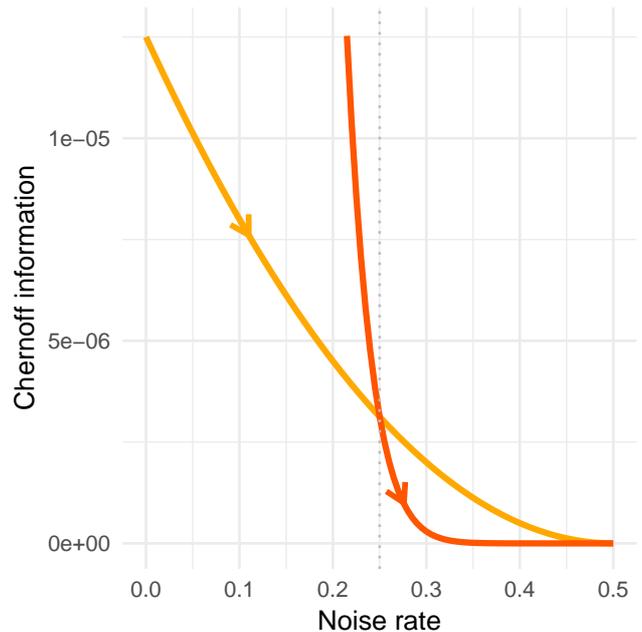}
   \caption{Phase transition: the two upper bounds cross at 0.25.}
   \label{fig:image1}
 \end{figure}

Our findings can also be useful for Information Geometry research, as there have been various attempts to analyze the \CI in parametric distribution spaces with wide applications ranging from signal processing to machine learning \cite{nielsen2013information}.

The paper is organized as follows. In Section \ref{seq:pf}, the problem formulation is presented. In Section \ref{seq:main}, our main results are presented. We provide the proof ideas of our main results in Section \ref{seq:proof}. Finally, in Section \ref{seq:con}, we conclude the paper. 

\section{Problem formulation}\label{seq:pf}

We consider a source having $k$ symbols where the frequency of the $i$th symbol is denoted by $\mathrm{p}_{i}$. Furthermore, we assume symbols are distinct binary vectors of length $\mathrm{L}$. The $i$th symbol is denoted by $\mathcal{Z}_{i}$. The source is observed through a symmetric memory less noisy channel, where we have access to $m$ i.i.d observations of the source from the channel. In particular, at time $1 \leq j \leq m$, the source outputs the symbol $\mathcal{Y}_{j}$ based on the frequencies of the symbols and we observe $\mathcal{M}_{j}$ where $\mathcal{P}(\mathcal{M}_{j} \mid \mathcal{Y}_{j}) = \prod_{l=1}^L p(\mathcal{M}_{j}(l) \mid \mathcal{Y}_{j}(l))$, and $p$ is defined as 
\begin{align}
p(x \mid y) = \begin{cases}1- \mathrm{f} & x=y\\\mathrm{f} & x \neq y\end{cases}. \nonumber
\end{align}
The flip probability $\mathrm{f}$ is known. We are interested in learning the source symbols and their frequencies. 

Given a fixed number N, we assume all the frequencies are integer multiples of $\frac{1}{\mathrm{N}}$. In this way, the frequency of $\mathcal{Z}_{i}$ can be expressed as $\frac{\alpha_{i}}{\mathrm{N}}$ where $\alpha_{i} \in \mathbb{N}$. Regarding this assumption, the source can be equivalently represented as an $\mathrm{N} \times \mathrm{L}$ matrix $\mathcal{X}^*$, where each row corresponds to one source symbol, and symbol $\mathcal{Z}_{i}$ is replicated $\alpha_{i}$ times. The distribution of the source can be expressed as $P_{\mathcal{X}^{*}}$. 

Let $\mathcal{M} = \{ \mathcal{M}_{i} \}_{i=1}^m$ be the set of observed sequences. To infer $\mathcal{X^*}$ from noisy data, Maximum Likelihood (\textsf{ML}) estimator picks matrix $\hat{\mathcal{X}}$, which gives the highest probability $\mathrm{P}(\mathcal{M} \mid \hat{\mathcal{X}})$. We illustrate the space of $N \times \mathrm{L}$ binary matrices by $\{ 0 , 1 \} ^ {\mathrm{N} \mathrm{L}}$. The region of observations where \ML estimator makes the right decision can be represented by  
\begin{align}
\mathcal{A}_{m} = \{ \mathcal{M}  \ |  \mathrm{P}(\mathcal{M} \mid \mathcal{X}^*) > \mathrm{P}(\mathcal{M} \mid \mathcal{X}); \ \forall \mathcal{X} \neq \mathcal{X}^* \in \{ 0,1 \}^{\mathrm{N}\mathrm{L}} \}. \nonumber
\end{align}
Given matrices $ \mathcal{X}_{1} , \mathcal{X}_{2} $, We say $\mathcal{X}_{1}$ is equal to $\mathcal{X}_{2}$ and write $\mathcal{X}_{1} = \mathcal{X}_{2}$, if rows of $\mathcal{X}_{1}$ are a permutation of rows of $
\mathcal{X}_{2}$. 
Our analysis is independent of the order of rows, since re-permuting the rows in a matrix $\mathcal{X}$ does not change the distribution $P_{\X}$. Throughout the paper, we don't distinguish between matrices with same multiset of rows and different orders.

Note that $\Pr(A^c_{m})$ is \textsf{ML}'s probability of error. Our goal is to find the best exponent of error probability, defined as 
$$\mathcal{D}_{\mathcal{X}^*} = -\lim_{m \rightarrow \infty} \frac{1}{m}\log(\mathrm{P}(\mathcal{A}^c_{m})).$$  
We are interested in answering the following fundamental question: For a matrix $\tilde{\mathcal{X}} \neq \mathcal{X}^*$, what is the probability that $\tilde{\mathcal{X}}$ obtains a higher likelihood than $\mathcal{X}^*$?

In the hypothesis testing problem, we want to decide between two candidate distributions $P_{1}$ and $P_{2}$, based on observed sample vector $\{ x_{i} \}_{i=1}^m$. Let $P^{m}_{1} , P^{m}_{2}$ be the joint distributions of $m$ samples independently driven from $P_{1}$ and $P_{2}$ respectively. From Neyman-Pearson lemma \cite{thomas1991cover}, the optimal test has the rejection region $\mathrm{B}_{m} = \{ \frac{P^{(m)}_{2}(x)}{P^{(m)}_{1}(x)} \geq \mathrm{T}\}$, for any constant $\mathrm{T}$. Furthermore, for a fixed $\mathrm{T}$, we have 
\begin{align}
-\frac{1}{m}\lim_{m \rightarrow \infty} \log{P^{(m)}_{1}(\mathrm{B}_{m}}) & = -\frac{1}{m}\lim_{m \rightarrow \infty} \log{P^{(m)}_{2}(\mathrm{B}^c_{m}})   \nonumber \\ 
& = \mathcal{C}(P_{1},P_{2}), \nonumber
\end{align}
where $\mathcal{C}$ is the Chernoff information between $P_{1}$ and $P_{2}$, defined by 
\begin{align}
\mathcal{C}(P_{1} , P_{2}) = -\min_{0 \leq \lambda \leq 1} \log(\sum_{x} P_{1}^{\lambda}(x)P_{2}^{1-\lambda}(x) ). \nonumber
\end{align}

For desired matrix $\tilde{\mathcal{X}}$, let us define 
$$\mathcal{G}_{m}(\tilde{\mathcal{X}}) = \{ \mathcal{M} \ |  \ \Pr(\mathcal{M} | \tilde{\mathcal{X}}) \geq \Pr(\mathcal{M} | \mathcal{X}^*) \}.$$
Hence,
\begin{align}
& \max_{\tilde{\mathcal{X}} \neq \mathcal{X}^*}{\mathrm{P}(\mathcal{G}_{m}(\tilde{\mathcal{X}}))} \leq \mathrm{P}(A^c_{m}) = \mathrm{P}(\bigcup_{\tilde{\mathcal{X}} \neq \mathcal{X}^*} \mathcal{G}_{m}(\tilde{\mathcal{X}})) \nonumber \\ 
& \leq \sum_{\tilde{\mathcal{X}} \neq \mathcal{X}^*} \mathrm{P}(\mathcal{G}_{m}(\tilde{\mathcal{X}})) \leq 2^{\mathrm{N}\mathrm{L}} \max_{\tilde{\mathcal{X}} \neq \mathcal{X}^*}{\mathrm{P}(\mathcal{G}_{m}(\tilde{\mathcal{X}}))}, \nonumber
\end{align}
which yields
\begin{align}
\frac{1}{m} \log(\max_{\tilde{\mathcal{X}} \neq \mathcal{X}^*}{\mathrm{P}(\mathcal{G}_{m}(\tilde{\mathcal{X}}))}) \leq \frac{1}{m} \log( \mathrm{P}(\mathcal{A}^c_{m}))  & \leq \nonumber \\ 
\frac{\mathrm{N}\mathrm{L}}{m} + \frac{1}{m}\log(\max_{\tilde{\mathcal{X}} \neq \mathcal{X}^*}{\mathrm{P}(\mathcal{G}_{m}(\tilde{\mathcal{X}}))}). \nonumber
\end{align}
Thus,
\begin{align}
\mathcal{D}_{\mathcal{X}^*} & = -\lim_{m \rightarrow \infty}[\frac{1}{m} \log( \mathrm{P}(\mathcal{A}^c_{m}))]  \nonumber \\
& = -\lim_{m \rightarrow \infty}[ \frac{1}{m} \log( \max_{\tilde{\mathcal{X}} \neq \mathcal{X}^*}{\mathrm{P}(\mathcal{G}_{m}(\tilde{\mathcal{X}}))})] \nonumber \\
& =  \min_{\tilde{\mathcal{X}} \neq \mathcal{X}^*}  -\lim_{m \rightarrow \infty}[ \frac{1}{m} \log({\mathrm{P}(\mathcal{G}_{m}(\tilde{\X}))})] \nonumber \\
& =  \min_{\tilde{\X} \neq \X^*}  C(P_{\tilde{\X}} , P_{\X^*}). \nonumber
\end{align}

We are interested in finding the worst $\mathcal{X}^*$, where \ML  obtains its highest error asymptotically. Hence, if we define
\begin{align}
& \mathcal{D}_{\mathrm{worst}} = \min_{\mathcal{X}^*} \mathcal{D}_{\mathcal{X}^*} =  \min_{\mathcal{X}^* \in \{0 , 1\}^{\mathrm{N}\mathrm{L}}} \min_{ \tilde{\mathcal{X}} \neq \mathcal{X}^*} \mathcal{C}(P_{\tilde{\mathcal{X}}} , P_{\mathcal{X}^*}), 	\nonumber \\
& \mathcal{C}^*(\mathrm{N},\mathrm{L}) = \min_{\substack{\mathcal{X}_{1} , \mathcal{X}_{2} \in \{0 , 1 \}^{\mathrm{N}\mathrm{L}}, \\ \mathcal{X}_{1} \neq \mathcal{X}_{2}}}{\mathcal{C}(P_{\mathcal{X}_{1}} , P_{\mathcal{X}_{2}})}, \label{eq:minimization}
\end{align}
we have \
$\mathcal{D}_{\mathrm{worst}} = \mathcal{C}^*(\mathrm{N},\mathrm{L}). \nonumber$
This implies that in order to find the worst possible exponent of error with respect to \textsf{ML}, we need to find the closest pair of distributions in the set $\{ P_{\mathcal{X}} \ | \mathcal{X} \in \{ 0,1 \}^{\mathrm{N}\mathrm{L}} \}$ with regards to the measure of \CI. Hence, we aim to solve the minimization problem of \eqref{eq:minimization}.
\section{Main Results}\label{seq:main}
Our main result is stated in the followingTheorem. 
\begin{thm} \label{thm:main}
For $\mathcal{C}^*(\mathrm{N},\mathrm{L})$ defined in equation \eqref{eq:minimization}, we have
 \begin{enumerate}
\item [1)] For $\mathrm{f} \leq \frac{1}{4}$ and odd $\mathrm{N}$,
\begin{align}
\mathcal{C}^*(\mathrm{N},\mathrm{L}) = -\log(\sqrt{1 - \eta_{\mathrm{N}}^2}) \ , \ \eta_{\mathrm{N}} = \frac{1-2\mathrm{f}}{\mathrm{N}}. \nonumber
\end{align}
\item [2)] For $\mathrm{f} \leq \frac{1}{4}$ and even $\mathrm{N}$,
\begin{align}
 \!\!\!\!\!\!\!\!\!\!\! -\log(\sqrt{1 - \eta_{\mathrm{N}}^2}) \leq  \mathcal{C}^*(\mathrm{N},\mathrm{L}) \nonumber 
  \leq -\log(\frac{1}{\mathrm{N}} + \frac{\mathrm{N}-1}{\mathrm{N}}\sqrt{1-\eta_{\mathrm{N}-1}^2}).
\end{align}
\item [3)] Define $\mathcal{L} = \min(\mathrm{L} , \lfloor \log \mathrm{N} \rfloor + 1).$
Furthermore, define non-negative integers $\mathrm{k}$ and $\mathrm{R}$, where $\mathrm{k}=2n+1$, and
$\mathrm{N} = 2^{\mathcal{L}-1} \mathrm{k} + \mathrm{R}, \ \ \mathrm{R} < 2^{\mathcal{L}}$. Let  
\begin{align}
\ \ \ \ \epsilon_{\mathcal{L},\mathrm{N}} = \frac{[2(1-2\mathrm{f})]^{\mathcal{L}}}{2\mathrm{N}}. \nonumber
\end{align}
Then, for $\mathrm{f} > \frac{1}{4}$, we have
\begin{align}
&-\log(\sqrt{1 - \epsilon_{\mathcal{L},\mathrm{N}}^2}) \leq \mathcal{C}^*(\mathrm{N},\mathrm{L}) \nonumber \\    & \leq  -\log(\sqrt{(\frac{\mathrm{N}-\mathrm{R}}{\mathrm{N}})^2 - \epsilon_{\mathcal{L},\mathrm{N}}^2} + \frac{\mathrm{R}}{\mathrm{N}}).  \nonumber
\end{align}
\end{enumerate}
\end{thm}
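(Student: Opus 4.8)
The plan is to recast everything in terms of the Bhattacharyya affinity $\mathrm{A}(P,Q):=\sum_{x}\sqrt{P(x)Q(x)}$. Taking $\lambda=\tfrac12$ in the definition of $\mathcal{C}$ gives $\mathcal{C}(P,Q)\ge-\log\mathrm{A}(P,Q)$, hence $\mathcal{C}^{*}(\mathrm{N},\mathrm{L})\ge-\log\bigl(\max_{\mathcal{X}_{1}\neq\mathcal{X}_{2}}\mathrm{A}(P_{\mathcal{X}_{1}},P_{\mathcal{X}_{2}})\bigr)$, which reduces all three lower bounds to one uniform estimate on the affinity; conversely any single pair yields $\mathcal{C}^{*}\le\mathcal{C}(P_{\mathcal{X}_{1}},P_{\mathcal{X}_{2}})\le-\log\mathrm{A}(P_{\mathcal{X}_{1}},P_{\mathcal{X}_{2}})$, so each upper bound will come from one explicit construction plus a lower bound on its affinity. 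I will also use that both $\mathrm{A}$ and $\mathcal{C}$ are unchanged under tensoring with a common factor, which lets me freeze all but a few coordinates.

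For the affinity estimate I would pass to the Walsh basis $\chi_{z}(y)=(-1)^{\langle z,y\rangle}$ on $\{0,1\}^{\mathrm{L}}$. Since the BSC kernel has transform $(1-2\mathrm{f})^{|z|}(-1)^{\langle z,r\rangle}$, one gets $\widehat{P_{\mathcal{X}}}(z)=(1-2\mathrm{f})^{|z|}\widehat{\mu_{\mathcal{X}}}(z)$, with $\mu_{\mathcal{X}}$ the uniform measure on the rows of $\mathcal{X}$, hence $\widehat{P_{\mathcal{X}_{1}}}(z)-\widehat{P_{\mathcal{X}_{2}}}(z)=\tfrac{(1-2\mathrm{f})^{|z|}}{\mathrm{N}}\widehat{\Delta}(z)$ where $\Delta$ is the integer difference of the row-multiplicity vectors and $\|\Delta\|_{1}\le 2\mathrm{N}$. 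Let $\mathrm{w}$ be the least Hamming weight of a $z$ with $\widehat{\Delta}(z)\neq 0$ and put $S=\mathrm{supp}(z_{0})$ for one such $z_{0}$. Marginalizing both laws onto the coordinates in $S$ can only increase $\mathrm{A}$ (data processing for Hellinger distance), and on $\{0,1\}^{S}$ the difference has no Walsh component of weight $<\mathrm{w}$, so the integer vector $\Delta|_{S}$ equals $c\,\chi_{\mathbf{1}_{S}}$ with $c\in\mathbb{Z}\setminus\{0\}$; this forces $\bigl|\widehat{(P_{\mathcal{X}_{1}}-P_{\mathcal{X}_{2}})}(z_{0})\bigr|=|c|\,\tfrac{[2(1-2\mathrm{f})]^{\mathrm{w}}}{\mathrm{N}}\ge\tfrac{[2(1-2\mathrm{f})]^{\mathrm{w}}}{\mathrm{N}}$ and, via $2^{\mathrm{w}}\le\|\Delta|_{S}\|_{1}\le\|\Delta\|_{1}\le 2\mathrm{N}$ (and trivially $\mathrm{w}\le\mathrm{L}$), it forces $\mathrm{w}\le\mathcal{L}$. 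On the $2^{\mathrm{w}}$-point cube the two marginals differ by a pure multiple of $\chi_{\mathbf{1}_{S}}$, so their affinity is $\sum_{y}\sqrt{\bar P(y)^{2}-d^{2}}$ with $\bar P$ the average and $2^{\mathrm{w}+1}d=\bigl|\widehat{(P_{\mathcal{X}_{1}}-P_{\mathcal{X}_{2}})}(z_{0})\bigr|$; concavity of $t\mapsto\sqrt{t^{2}-d^{2}}$ and Jensen (using $\sum_{y}\bar P(y)=1$) bound this by $\sqrt{1-4^{\mathrm{w}}d^{2}}\le\sqrt{1-\tfrac{[2(1-2\mathrm{f})]^{2\mathrm{w}}}{4\mathrm{N}^{2}}}$. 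Maximizing over $1\le\mathrm{w}\le\mathcal{L}$: for $\mathrm{f}\le\tfrac14$ the exponent $[2(1-2\mathrm{f})]^{2\mathrm{w}}$ is nondecreasing in $\mathrm{w}$, so the worst case is $\mathrm{w}=1$ and the bound is $\sqrt{1-\eta_{\mathrm{N}}^{2}}$; for $\mathrm{f}>\tfrac14$ it is decreasing, so the worst case is $\mathrm{w}=\mathcal{L}$ and the bound is $\sqrt{1-\epsilon_{\mathcal{L},\mathrm{N}}^{2}}$. This gives all three lower bounds on $\mathcal{C}^{*}$.

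For the matching upper bounds I would take, on $\mathcal{L}$ active coordinates (all others frozen to $0$, contributing only a common product factor), a ``parity'' configuration: $\mathrm{R}$ zero rows, $\tfrac{\mathrm{k}-1}{2}$ copies of every pattern in $\{0,1\}^{\mathcal{L}}$, and an active block listing the $2^{\mathcal{L}-1}$ even-weight patterns for $\mathcal{X}_{1}$ and the odd-weight patterns for $\mathcal{X}_{2}$ (total $\mathrm{N}=2^{\mathcal{L}-1}\mathrm{k}+\mathrm{R}$). A Walsh computation shows $P_{\mathcal{X}_{1}}$ and $P_{\mathcal{X}_{2}}$ differ only in the $\mathbf{1}$-mode, share a mixture component of weight $\mathrm{R}/\mathrm{N}$, and that the remaining components have affinity $\sqrt{1-(1-2\mathrm{f})^{2\mathcal{L}}/\mathrm{k}^{2}}$; applying $\mathrm{A}(\alpha Q+(1-\alpha)\nu_{1},\alpha Q+(1-\alpha)\nu_{2})\ge\alpha+(1-\alpha)\mathrm{A}(\nu_{1},\nu_{2})$ gives $\mathrm{A}(P_{\mathcal{X}_{1}},P_{\mathcal{X}_{2}})\ge\tfrac{\mathrm{R}}{\mathrm{N}}+\sqrt{(\tfrac{\mathrm{N}-\mathrm{R}}{\mathrm{N}})^{2}-\epsilon_{\mathcal{L},\mathrm{N}}^{2}}$, which is exactly the quantity in part 3. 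The specialization $\mathcal{L}=1$, $\mathrm{R}=0$, $\mathrm{k}=\mathrm{N}$ (odd) is the pair $\mathrm{Ber}(\tfrac12\mp\tfrac{\eta_{\mathrm{N}}}{2})$, whose affinity is \emph{exactly} $\sqrt{1-\eta_{\mathrm{N}}^{2}}$, so it matches the lower bound and yields the equality in part 1; the specialization $\mathcal{L}=1$, $\mathrm{k}=\mathrm{N}-1$, $\mathrm{R}=1$ gives part 2 after simplifying $\sqrt{(\tfrac{\mathrm{N}-1}{\mathrm{N}})^{2}-\eta_{\mathrm{N}}^{2}}=\tfrac{\mathrm{N}-1}{\mathrm{N}}\sqrt{1-\eta_{\mathrm{N}-1}^{2}}$.

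The hard part will be the tight affinity bound. Crude estimates — for instance Cauchy--Schwarz against a single character — give $\mathrm{A}\le 1-\Theta(\mathrm{N}^{-2})$ but with a constant off by a lower-order term, so it is genuinely necessary both to pass to the $2^{\mathrm{w}}$-point sub-cube, where the affinity is evaluated exactly rather than bounded, and to use the integrality $\Delta|_{S}=c\,\chi_{\mathbf{1}_{S}}$ with $|c|\ge 1$, which is what upgrades the usable spectral gap from $(1-2\mathrm{f})^{\mathrm{w}}/\mathrm{N}$ to $[2(1-2\mathrm{f})]^{\mathrm{w}}/\mathrm{N}$. The phase transition then falls out as the sign change of $\mathrm{w}\mapsto[2(1-2\mathrm{f})]^{\mathrm{w}}$ at $\mathrm{f}=\tfrac14$: below the threshold the extremal pairs differ in a weight-one Walsh mode, above it they differ in a mode of the maximal available weight $\mathrm{w}=\mathcal{L}$.
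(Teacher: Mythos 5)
Your lower-bound argument is sound, but there is one genuine error in the logic of the upper bounds: the inequality ``$\mathcal{C}(P_{\mathcal{X}_1},P_{\mathcal{X}_2})\le-\log\mathrm{A}(P_{\mathcal{X}_1},P_{\mathcal{X}_2})$'' is backwards. Since $\mathcal{C}=-\min_{\lambda}\log f_{\lambda}$ and $\lambda=\tfrac12$ is one admissible choice, the general relation is $\mathcal{C}\ge-\log\mathrm{A}$, the same inequality you correctly use for the lower bounds. Consequently, lower-bounding the affinity of your explicit pairs does not, by itself, upper-bound their Chernoff information: you must additionally show that for these pairs the Chernoff optimum sits at $\lambda^{*}=\tfrac12$, i.e.\ that $\min_{\lambda}f_{\lambda}=f_{1/2}$. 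This is exactly what the paper's Lemma~\ref{lem:symmetry} supplies via the swap symmetry $P_1(s_1)=P_2(s_2)$, $P_1(s_2)=P_2(s_1)$ (equivalently: an involution of $\{0,1\}^{\mathrm{L}}$ exchanging $P_1$ and $P_2$ forces $f_{\lambda}=f_{1-\lambda}$, which together with convexity of $\lambda\mapsto f_{\lambda}$ pins the minimum at $\tfrac12$). Your constructions do possess this symmetry when $\mathrm{R}=0$ (flip one active coordinate), so parts 1 and the tight case of part 3 are repairable in one line. For $\mathrm{R}>0$ the involution does not exchange $P_1$ and $P_2$ globally (the common component $Q_0$ is not flip-invariant), so you must first run your mixture inequality for \emph{every} $\lambda$, not just $\lambda=\tfrac12$ --- i.e.\ $f_{\lambda}(P_1,P_2)\ge\tfrac{\mathrm{R}}{\mathrm{N}}+\tfrac{\mathrm{N}-\mathrm{R}}{\mathrm{N}}f_{\lambda}(\nu_1,\nu_2)$, which is the paper's Lemma~\ref{lem:basic} --- and only then invoke the symmetry of $\nu_1,\nu_2$. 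The ingredients are all present in your sketch, but the step as written is false and the fix must be made explicit.

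The lower bound, by contrast, is correct and takes a genuinely different route from the paper. The paper proceeds by iterated column reductions (Lemmas~\ref{lem:chernoffloss} and~\ref{lem:chernoffloss2}), tracking ``degrees of regularity'' to force the cluster sizes $2^{\mathrm{L}-\alpha}$, collapsing to a one-dimensional Bernoulli pair, and then needs the rather delicate monotonicity Lemma~\ref{lem:bernoullikomaki} to minimize $\mathcal{C}_{\mathrm{br}}$ subject to an $\mathbb{L}_1$ gap. Your Walsh-basis argument replaces all of this: the minimal-weight nonvanishing Fourier mode of the integer multiplicity difference plays the role of $\mathrm{L}-\alpha$, the integrality $|c|\ge1$ of $\Delta|_S$ recovers the crucial factor $2^{\mathrm{w}}$ that the paper gets from regularity, $\|\Delta\|_1\le2\mathrm{N}$ recovers the cap $\mathrm{w}\le\lfloor\log\mathrm{N}\rfloor+1$, and the exact evaluation $\sum_y\sqrt{\bar P(y)^2-d^2}$ on the sub-cube followed by Jensen (concavity of $t\mapsto\sqrt{t^2-d^2}$) replaces the Bernoulli monotonicity lemma entirely. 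Both routes land on the same quantity $\wp=[2(1-2\mathrm{f})]^{\mathrm{w}}/(2\mathrm{N})$ and the same $\mathrm{w}=1$ versus $\mathrm{w}=\mathcal{L}$ dichotomy at $\mathrm{f}=\tfrac14$; your version is shorter and makes the phase transition transparent as a statement about which Walsh mode is cheapest to hide, at the price of losing the explicit reduction dynamics the paper reuses for its equality conditions and for Theorem~\ref{thm:generalized}.
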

The bounds represented in section 3 of Theorem \ref{thm:main} are tight in two regimes, which are summarized in Corollary \ref{cor:equalityconditions}.
\begin{cor} \label{cor:equalityconditions}
In the last section of Theorem \ref{thm:main}, equality holds ($R$ gets zero), iff one of the following conditions is satisfied.
\begin{enumerate}
\item [1)] $\mathrm{N}$ is a power of $2$, and $\mathrm{N} \leq 2^{\mathrm{L}-1}$.
\item [2)] $\mathrm{N} = 2^{\mathrm{L}-1}(2n + 1)$  for positive integer $n$. 
\end{enumerate} 
\end{cor}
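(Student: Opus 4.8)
\emph{Proof proposal.} I would split the statement into two claims and prove them separately: \textbf{(A)} the upper and lower bounds in part~3) of Theorem~\ref{thm:main} coincide iff $\mathrm{R}=0$, and \textbf{(B)} the equation $\mathrm{R}=0$ holds iff condition~1) or condition~2) holds. Claim~(A) is a short monotonicity fact about square roots, and claim~(B) is elementary number theory.

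For (A), since $t\mapsto-\log t$ is strictly decreasing the two bounds agree exactly when $\sqrt{\big(\tfrac{\mathrm{N}-\mathrm{R}}{\mathrm{N}}\big)^{2}-\epsilon_{\mathcal{L},\mathrm{N}}^{2}}+\tfrac{\mathrm{R}}{\mathrm{N}}=\sqrt{1-\epsilon_{\mathcal{L},\mathrm{N}}^{2}}$. I would isolate the inequality $\sqrt{a^{2}-\epsilon^{2}}+b\le\sqrt{(a+b)^{2}-\epsilon^{2}}$, valid for $0\le\epsilon\le a$ and $b\ge0$, with equality iff $b\big(a-\sqrt{a^{2}-\epsilon^{2}}\big)=0$: both sides are nonnegative, so squaring reduces it to $2ab\ge2b\sqrt{a^{2}-\epsilon^{2}}$, which is immediate from $a\ge\sqrt{a^{2}-\epsilon^{2}}$, and the equality case is read off directly. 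Applying this with $a=\tfrac{\mathrm{N}-\mathrm{R}}{\mathrm{N}}$, $b=\tfrac{\mathrm{R}}{\mathrm{N}}$, $\epsilon=\epsilon_{\mathcal{L},\mathrm{N}}$ gives ``upper bound $\ge$ lower bound'' (the hypothesis $\epsilon\le a$ is precisely the positivity of the radicand already needed for the upper bound to be real, and it holds since $2(1-2\mathrm{f})<1$ for $\mathrm{f}>\tfrac14$ forces $\epsilon_{\mathcal{L},\mathrm{N}}<\tfrac1{2\mathrm{N}}\le\tfrac{\mathrm{N}-\mathrm{R}}{\mathrm{N}}$). Since $a>0$ and $\epsilon_{\mathcal{L},\mathrm{N}}>0$ in the regime $\tfrac14<\mathrm{f}<\tfrac12$, the equality case collapses to exactly $b=0$, i.e.\ $\mathrm{R}=0$.

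For (B), I would first note that the representation $\mathrm{N}=2^{\mathcal{L}-1}\mathrm{k}+\mathrm{R}$ with $\mathrm{k}$ odd and $0\le\mathrm{R}<2^{\mathcal{L}}$ is unique: subtracting two such representations makes $2^{\mathcal{L}-1}$ times the difference of the odd parts lie in $(-2^{\mathcal{L}},2^{\mathcal{L}})$, so that difference, being even, is $0$. Hence $\mathrm{R}=0$ iff $\mathrm{N}=2^{\mathcal{L}-1}\mathrm{k}$ with $\mathrm{k}$ odd, and I would case-split on the $\min$ in $\mathcal{L}=\min(\mathrm{L},\lfloor\log\mathrm{N}\rfloor+1)$. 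If $\mathrm{N}<2^{\mathrm{L}}$ then $\mathcal{L}=\lfloor\log\mathrm{N}\rfloor+1$, so $2^{\mathcal{L}-1}\le\mathrm{N}<2\cdot2^{\mathcal{L}-1}$ and $\mathrm{N}=2^{\mathcal{L}-1}\mathrm{k}$ forces $\mathrm{k}=1$; then $\mathrm{N}=2^{\mathcal{L}-1}$ is a power of $2$ with $\log\mathrm{N}=\mathcal{L}-1\le\mathrm{L}-1$ --- this is condition~1). If $\mathrm{N}\ge2^{\mathrm{L}}$ then $\mathcal{L}=\mathrm{L}$ and $\mathrm{N}=2^{\mathrm{L}-1}\mathrm{k}\ge2^{\mathrm{L}}$ forces $\mathrm{k}\ge2$, hence (being odd) $\mathrm{k}=2n+1$ with $n\ge1$ --- this is condition~2). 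For the converses, condition~1) gives $\mathrm{N}\le2^{\mathrm{L}-1}<2^{\mathrm{L}}$, landing in the first case with $\mathcal{L}=\log\mathrm{N}+1$ and $\mathrm{R}=0$; condition~2) gives $\mathrm{N}\ge3\cdot2^{\mathrm{L}-1}>2^{\mathrm{L}}$, landing in the second case with $\mathcal{L}=\mathrm{L}$ and $\mathrm{R}=0$; and the two conditions are disjoint, since one yields a power of $2$ at most $2^{\mathrm{L}-1}$ and the other a number with an odd factor $\ge3$ exceeding $2^{\mathrm{L}-1}$.

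The computations are light; the one delicate point is tracking the interaction between the $\min$ defining $\mathcal{L}$ and the remainder bound $\mathrm{R}<2^{\mathcal{L}}$ (rather than $\mathrm{R}<2^{\mathcal{L}-1}$) --- this is exactly why condition~2) carries the odd factor $2n+1$ while condition~1) carries the cap $\mathrm{N}\le2^{\mathrm{L}-1}$ --- together with keeping the regime $\tfrac14<\mathrm{f}<\tfrac12$ and the inherited bound $\epsilon_{\mathcal{L},\mathrm{N}}\le\tfrac{\mathrm{N}-\mathrm{R}}{\mathrm{N}}$ in view, so that the equality case in (A) genuinely pins down $\mathrm{R}=0$ and nothing else.
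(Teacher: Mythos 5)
Your proposal is correct and follows the same route the paper takes: the paper's remark after Lemma \ref{lem:nearoptimal} observes that the upper bound collapses to the lower bound $\tau_2$ exactly when $\mathrm{R}=0$, and your claim (A) is just a careful justification of that collapse (including the needed caveats $\epsilon_{\mathcal{L},\mathrm{N}}>0$ and $\epsilon_{\mathcal{L},\mathrm{N}}\le\frac{\mathrm{N}-\mathrm{R}}{\mathrm{N}}$), while your claim (B) supplies the elementary arithmetic translating $\mathrm{R}=0$ into the two stated conditions via the case split on $\mathcal{L}=\min(\mathrm{L},\lfloor\log\mathrm{N}\rfloor+1)$. The paper leaves these details to its full version, and your write-up fills them in correctly.
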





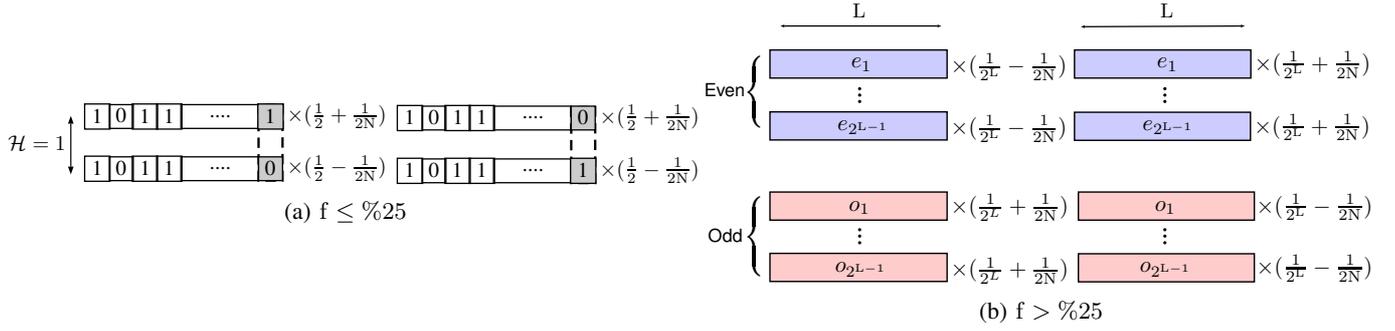
\begin{figure*}[ht]

\begin{subfigure}{0.5\textwidth}
%
%
\psscalebox{0.8 0.8} 
{
\begin{pspicture}(0,-0.67024124)(11.27479,0.67024124)
\definecolor{colour0}{rgb}{0.8,0.8,0.8}
\rput[bl](4.5847898,-0.61124134){$ \times (\frac{1}{2} - \frac{1}{2\mathrm{N}}) $}
\psframe[linecolor=black, linewidth=0.038, linestyle=dashed, dash=0.17638889cm 0.10583334cm, dimen=middle](4.509169,0.6482411)(4.114927,-0.61124134)
\psframe[linecolor=black, linewidth=0.022, dimen=inner](1.6609871,0.6482411)(1.2667447,0.25399867)
\psframe[linecolor=black, linewidth=0.022, dimen=middle](2.0552297,0.6482411)(1.6609871,0.25399867)
\psframe[linecolor=black, linewidth=0.022, dimen=inner](2.8437145,0.6482411)(2.449472,0.25399867)
\rput[bl](1.3747672,0.34743413){1}
\rput[bl](1.7690096,0.34743413){0}
\rput[bl](2.5574944,0.34743413){1}
\psframe[linecolor=black, linewidth=0.022, dimen=middle](4.114927,0.6482411)(2.837957,0.25399867)
\rput[bl](3.3096085,0.4379128){....}
\psframe[linecolor=black, linewidth=0.022, fillstyle=solid,fillcolor=colour0, dimen=inner](4.509169,0.6482411)(4.114927,0.25399867)
\psframe[linecolor=black, linewidth=0.022, dimen=inner](1.6609871,-0.21699892)(1.2667447,-0.61124134)
\psframe[linecolor=black, linewidth=0.022, dimen=middle](2.0552297,-0.21699892)(1.6609871,-0.61124134)
\psframe[linecolor=black, linewidth=0.022, dimen=inner](2.8437145,-0.21699892)(2.449472,-0.61124134)
\rput[bl](1.3747672,-0.5178059){1}
\rput[bl](1.7690096,-0.5178059){0}
\rput[bl](2.163252,-0.5178059){1}
\rput[bl](2.5574944,-0.5178059){1}
\psframe[linecolor=black, linewidth=0.022, dimen=middle](4.114927,-0.21699892)(2.837957,-0.61124134)
\rput[bl](3.3096085,-0.42732728){....}
\psframe[linecolor=black, linewidth=0.022, fillstyle=solid,fillcolor=colour0, dimen=middle](4.509169,-0.21699892)(4.114927,-0.61124134)
\rput[bl](4.2223735,0.34743413){1}
\rput[bl](4.2223735,-0.5178059){0}
\rput[bl](4.5847898,0.25399867){$ \times (\frac{1}{2} + \frac{1}{2\mathrm{N}}) $}
\psframe[linecolor=black, linewidth=0.022, dimen=inner](2.4437144,0.6482411)(2.049472,0.25399867)
\rput[bl](2.1574945,0.34743413){1}
\psframe[linecolor=black, linewidth=0.022, dimen=inner](2.4437144,-0.21699892)(2.049472,-0.61124134)
\rput[bl](9.72479,0.22875863){$ \times (\frac{1}{2} + \frac{1}{2\mathrm{N}}) $}
\psframe[linecolor=black, linewidth=0.038, linestyle=dashed, dash=0.17638889cm 0.10583334cm, dimen=middle](9.649169,0.60824114)(9.254927,-0.65124136)
\psframe[linecolor=black, linewidth=0.022, dimen=inner](6.8009872,-0.25175887)(6.4067445,-0.64600134)
\psframe[linecolor=black, linewidth=0.022, dimen=middle](7.1952295,-0.25175887)(6.8009872,-0.64600134)
\psframe[linecolor=black, linewidth=0.022, dimen=inner](7.9837146,-0.25175887)(7.589472,-0.64600134)
\rput[bl](6.514767,-0.5525659){1}
\rput[bl](6.9090095,-0.5525659){0}
\rput[bl](7.6974945,-0.5525659){1}
\psframe[linecolor=black, linewidth=0.022, dimen=middle](9.254927,-0.25175887)(7.977957,-0.64600134)
\rput[bl](8.449609,-0.4620872){....}
\psframe[linecolor=black, linewidth=0.022, fillstyle=solid,fillcolor=colour0, dimen=inner](9.649169,-0.25175887)(9.254927,-0.64600134)
\psframe[linecolor=black, linewidth=0.022, dimen=inner](6.8009872,0.6230011)(6.4067445,0.22875863)
\psframe[linecolor=black, linewidth=0.022, dimen=middle](7.1952295,0.6230011)(6.8009872,0.22875863)
\psframe[linecolor=black, linewidth=0.022, dimen=inner](7.9837146,0.6230011)(7.589472,0.22875863)
\rput[bl](6.514767,0.3221941){1}
\rput[bl](6.9090095,0.3221941){0}
\rput[bl](7.303252,0.3221941){1}
\rput[bl](7.6974945,0.3221941){1}
\psframe[linecolor=black, linewidth=0.022, dimen=middle](9.254927,0.6230011)(7.977957,0.22875863)
\rput[bl](8.449609,0.41267273){....}
\psframe[linecolor=black, linewidth=0.022, fillstyle=solid,fillcolor=colour0, dimen=middle](9.649169,0.6230011)(9.254927,0.22875863)
\rput[bl](9.362373,-0.5525659){1}
\rput[bl](9.362373,0.3221941){0}
\rput[bl](9.72479,-0.64600134){$ \times (\frac{1}{2} - \frac{1}{2\mathrm{N}}) $}
\psframe[linecolor=black, linewidth=0.022, dimen=inner](7.5837145,-0.25175887)(7.189472,-0.64600134)
\rput[bl](7.2974944,-0.5525659){1}
\psframe[linecolor=black, linewidth=0.022, dimen=inner](7.5837145,0.6230011)(7.189472,0.22875863)
\psline[linecolor=black, linewidth=0.02, arrowsize=0.05291667cm 2.0,arrowlength=1.4,arrowinset=0.0]{<->}(1.0285715,0.46252683)(1.0285715,-0.5089017)
\rput[bl](0.0,-0.10890174){$\mathcal{H} = 1$}
\end{pspicture}
}

\caption{$\mathrm{f} \leq \% 25$}
\label{fig:subim1}
\end{subfigure}
\hfill
\begin{subfigure}{0.5\textwidth}
\psscalebox{0.85 0.85} 
{
\begin{pspicture}(0,-2.1879456)(10.256145,2.1879456)
\definecolor{colour1}{rgb}{0.8,0.8,1.0}
\definecolor{colour2}{rgb}{1.0,0.8,0.8}
\psframe[linecolor=black, linewidth=0.022, fillstyle=solid,fillcolor=colour1, dimen=outer](3.7675939,1.4769223)(1.0045596,1.016673)
\psframe[linecolor=black, linewidth=0.022, fillstyle=solid,fillcolor=colour1, dimen=outer](3.7675939,0.5112368)(1.0045596,0.050987557)
\psframe[linecolor=black, linewidth=0.022, fillstyle=solid,fillcolor=colour2, dimen=outer](3.7675939,-0.7393863)(1.0045596,-1.1996355)
\psframe[linecolor=black, linewidth=0.022, fillstyle=solid,fillcolor=colour2, dimen=outer](3.7675939,-1.6963176)(1.0045596,-2.1565669)
\rput{90.0}(1.1555473,-0.6393478){\rput[bl](0.8974475,0.2580997){\small $\overbrace{ \ \ \ \ \ \ \ \  \ \ }$ \par}}
\rput[bl](2.2714198,1.1411108){$e_{1}$}
\rput[bl](2.0617507,0.14985591){$e_{2^{\mathrm{L}-1}}$}
\rput[bl](2.237327,-1.0990626){$o_{1}$}
\rput[bl](2.022265,-2.064748){$o_{2^{\mathrm{L}-1}}$}
\rput[bl](3.8124874,0.9911036){$\times (\frac{1}{2^\mathrm{L}} - \frac{1}{2\mathrm{N}})$}
\rput[bl](3.8124874,0.025418155){$\times (\frac{1}{2^\mathrm{L}} - \frac{1}{2\mathrm{N}})$}
\rput[bl](2.2956922,1.9721363){\sffamily \small{$\mathrm{L}$}}
\rput[bl](7.056327,1.9779456){\small \sffamily $\mathrm{L}$ \par}
\psframe[linecolor=black, linewidth=0.022, fillstyle=solid,fillcolor=colour1, dimen=outer](8.46706,1.4769223)(5.7166424,1.016673)
\psframe[linecolor=black, linewidth=0.022, fillstyle=solid,fillcolor=colour1, dimen=outer](8.461251,0.50542754)(5.7166424,0.045178294)
\rput[bl](7.0047846,1.1411108){$e_{1}$}
\rput[bl](6.780053,0.14985591){$e_{2^{\mathrm{L}-1}}$}
\rput[bl](8.511953,1.0252943){$\times (\frac{1}{2^\mathrm{L}} + \frac{1}{2\mathrm{N}})$}
\rput[bl](8.506145,0.05379963){$\times (\frac{1}{2^\mathrm{L}} + \frac{1}{2\mathrm{N}})$}
\rput{90.0}(-1.1607614,-2.9556563){\rput[bl](0.8974475,-2.0582087){$\overbrace{ \ \ \ \ \ \ \ \  \ \ }$}}
\psdots[linecolor=black, dotsize=0.05](7.1248274,0.86036533)
\psdots[linecolor=black, dotsize=0.05](7.1248274,0.7603653)
\psdots[linecolor=black, dotsize=0.05](7.1248274,0.6603653)
\rput[bl](3.8124874,-1.205205){$\times (\frac{1}{2^L} + \frac{1}{2\mathrm{N}})$}
\rput[bl](3.8124874,-2.1879456){$\times (\frac{1}{2^L} + \frac{1}{2\mathrm{N}})$}
\psframe[linecolor=black, linewidth=0.022, fillstyle=solid,fillcolor=colour2, dimen=outer](8.521251,-0.7393863)(5.7766423,-1.1996355)
\psframe[linecolor=black, linewidth=0.022, fillstyle=solid,fillcolor=colour2, dimen=outer](8.521251,-1.7021269)(5.7766423,-2.1623762)
\rput[bl](6.970692,-1.0990626){$o_{1}$}
\rput[bl](6.740567,-2.064748){$o_{2^{\mathrm{L}-1}}$}
\rput[bl](8.5461445,-1.1910142){$\times (\frac{1}{2^\mathrm{L}} - \frac{1}{2\mathrm{N}})$}
\psline[linecolor=black, linewidth=0.02, arrowsize=0.013cm 1.67,arrowlength=1.34,arrowinset=0.0]{<->}(1.1767576,1.8399218)(3.6683562,1.8399218)
\psline[linecolor=black, linewidth=0.02, arrowsize=0.013cm 1.67,arrowlength=1.34,arrowinset=0.0]{<->}(5.8248277,1.828658)(8.363257,1.8399218)
\psdots[linecolor=black, dotsize=0.05](7.1248274,-1.3396347)
\psdots[linecolor=black, dotsize=0.05](7.1248274,-1.4396347)
\psdots[linecolor=black, dotsize=0.05](7.1248274,-1.5396347)
\psdots[linecolor=black, dotsize=0.05](2.4048276,0.86036533)
\psdots[linecolor=black, dotsize=0.05](2.4048276,0.7603653)
\psdots[linecolor=black, dotsize=0.05](2.4048276,0.6603653)
\rput[bl](0.06,-1.5320317){\sffamily \footnotesize{Odd}}
\psdots[linecolor=black, dotsize=0.05](2.4048276,-1.3396347)
\psdots[linecolor=black, dotsize=0.05](2.4048276,-1.4396347)
\psdots[linecolor=black, dotsize=0.05](2.4048276,-1.5396347)
\rput[bl](8.5461445,-2.1510143){$\times (\frac{1}{2^\mathrm{L}} - \frac{1}{2\mathrm{N}})$}
\rput[bl](0.0,0.7479683){\sffamily \footnotesize{Even}}
\end{pspicture}
}
\caption{$\mathrm{f} > \% 25$}
\label{fig:subim2}
\end{subfigure}
 
\caption{Phase transition in the closest pair of source sequences with respect to their frequencies. (a) The noise level is below \% 25. For odd $N$, the closest pairs consist of two sequences with Hamming distance 1, and frequencies $\frac{1}{2} - \frac{1}{2N}$ and $\frac{1}{2} + \frac{1}{2N}$. (b) The  noise level exceeds \% 25, for $N = 2^L (2n+1)$. The closest pairs changes to the case where sources have all sequences with length $L$, having a $\frac{1}{N}$ difference between frequencies of the sequences in $U_{odd}$ in return to $U_{even}$.}
\label{fig:image2}
\end{figure*}


In Fig. \ref{fig:image2},  the closest pair of sources is illustrated with respect to their frequencies in two tight cases regarding $N = 2n+1$ for $\mathrm{f} \leq \frac{1}{4}$, and $\mathrm{N} = 2^\mathrm{L}(2n+1)$ for $\mathrm{f} > \frac{1}{4}$. Surprisingly, there exists a phase transition in the source structure when the noise level exceeds \%25. 

Let $\mathcal{U}_{\mathrm{even}}$ and $\mathcal{U}_{\mathrm{odd}}$ be the sets consisting of the sequences with length $\mathrm{L}$, which have even and odd number of ones, respectively. For noise probability less than \%25, the closest pair is expressed by sources which have two types of sequences with Hamming distance one, and frequencies $\frac{1}{2} - \frac{1}{2\mathrm{N}}$ and $\frac{1}{2} + \frac{1}{2\mathrm{N}}$. However, when the noise probability exceeds \%25, a deformation happens in the space of distributions corresponding to the sources, such that the closest pair incredibly alters to a totally different case; The two closest sources have all sequences of length $\mathrm{L}$, with $\frac{1}{\mathrm{N}}$ discrepancy between frequencies of sequences in $\mathcal{U}_{\mathrm{even}}$ versus $\mathcal{U}_{\mathrm{odd}}$.  It's worth to mention the astounding phase transition in reduction speed of \CI regarding the closest pair, when noise probability exceeds the threshold. The order of reduction changes from linear decrease, to polynomial decrease with degree $\mathrm{L}$.

\section{Proof Ideas}\label{seq:proof}
In this paper, we sketch the proof of Theorem \ref{thm:main}, by presenting the main ideas and results. We only consider the cases where our bounds are tight. In particular, we only focus on the first part of Theorem \ref{thm:main}, and the second part of Corollary \ref{cor:equalityconditions}. The reader can find the complete proofs of the expressed lemmas and theorems among with the proof of other cases of Theorem \ref{thm:main} in the full version of the paper \cite{mypaper}. 

The main idea behind the proof is to provide a method of lower bounding on the \CI, such that \textsf{CI} between every unequal pair of matrices $\mathcal{X}_{1} , \mathcal{X}_{2}$ can be lower bounded by a simple value $\tau$ as $$\tau \leq \mathcal{C}(P_{\mathcal{X}_{1}} , P_{\mathcal{X}_{2}}).$$ 

Our lower bounding technique arrives at $\tau$ by performing $\mathrm{L}$ iterations of column reduction on $\mathcal{X}_{1}$ and $\mathcal{X}_{2}$. At the first step, we transform $\mathcal{X}_{1} , \mathcal{X}_{2}$ to $\tilde{\mathcal{X}_{1}} , \tilde{\mathcal{X}_{2}}$ with $\mathrm{L}-1$ columns, such that 
\begin{align}
\mathcal{C}(P_{\tilde{\mathcal{X}_{1}}} , P_{\tilde{\mathcal{X}_{2}}}) \leq \mathcal{C}(P_{\mathcal{X}_{1}} , P_{\mathcal{X}_{2}}). \nonumber
\end{align}
Continuing iteratively, we reach to one-dimensional \textsf{BMMs}, where we can lower bound the \CI quite easily. Surprisingly, we can find specific pairs of matrices $(\mathcal{X}^*_{1} , \mathcal{X}^*_{2})$, such that column reductions do not incur any loss in terms of \CI. This means that $\mathcal{C}(\mathcal{X}^*_{1} , \mathcal{X}^*_{2})$ is the lower bound on any $\mathcal{C}(\mathcal{X}_{1},\mathcal{X}_{2})$. It is worth mentioning that $\mathcal{X}^*_{1}$ and $\mathcal{X}^*_{2}$ are functions of $\mathrm{f}$ which we elaborate on next.\\

\subsection{Definitions}
The idea behind the definition of column reduction is based on the concept of critical columns. Given matrices $\mathcal{X}_{1} \neq \mathcal{X}_{2}$ with $\mathrm{L}^{\prime}$ columns, the pair of $\ell$th columns in $\mathcal{X}_{1}$ and $\mathcal{X}_{2}$ is \textit{critical}, if by eliminating them, matrices become equal. 
Moreover, we call $(\mathcal{X}_{1} , \mathcal{X}_{2})$ a \textit{critical} pair, if for each $1 \leq \ell \leq \mathrm{L^{\prime}}$, the $\ell$th pair of columns in $\mathcal{X}_{1}$ and $\mathcal{X}_{2}$ is critical. 

For a desired $\mathrm{N} \times \mathrm{L}^{\prime}$ matrix $\X$, let us consider $\mathrm{f}_{\ell}$ as a specific flip rate corresponding to the $\ell$th column. Hence, the distribution $P_{\X}$ has the flip probability $\mathrm{f}_{\ell}$ with respect to the $\ell$th entry of each row in $\X$. By assumption, in the beginning of the reduction process, all columns have flip probability $\mathrm{f}$. However, as we will see, the reductions can change the flip probabilities. 
\begin{dif}
 For each $\mathrm{L^{\prime}} \leq \mathrm{L}$ and $1 \leq i < j \leq \mathrm{L^{\prime}}$, define the map $\phi^{\mathrm{L^{\prime}}}_{i,j} : \{ 0,1 \}^{\mathrm{NL^{\prime}}} \rightarrow \{ 0,1 \}^{\mathrm{N(L^{\prime}-1)}}$, such that for an $\mathrm{N} \times \mathrm{L^{\prime}}$ matrix $\mathcal{X}$, $\phi^{\mathrm{L^{\prime}}}_{i,j}(\mathcal{X})$ is obtained by removing the $i$th and $j$th columns $\mathcal{X}^{(i)} , \mathcal{X}^{(j)}$ and replacing $\mathcal{X}^{(i)} \oplus \mathcal{X}^{(j)}$ as a new column, with a flip probability defined as 
\begin{align}
\mathrm{f}_{\mathrm{new}} = \mathrm{f}_{i}(1 - \mathrm{f}_{j}) + (1 - \mathrm{f}_{i})\mathrm{f}_{j}. \label{eq:fnew}
\end{align} 
Furthermore, define $\mathbb{g}(\mathrm{f}) = 1-2\mathrm{f}$. Equation \eqref{eq:fnew} implies 
\begin{align}
\mathbb{g}(\mathrm{f}_{\mathrm{new}}) = \mathbb{g}(\mathrm{f}_{i})\mathbb{g}(\mathrm{f}_{j}). \label{eq:g}
\end{align}

For any column $\ell$ with flip rate $\mathrm{f}_{\ell}$, $\mathbb{g}(\mathrm{f}_{\ell}) \in [0,1]$ is a measure of the $\ell$th column's informativeness. From equation \eqref{eq:g}, we obtain $\mathbb{g}(\mathrm{f}_{\mathrm{new}}) \leq \mathbb{g}(\mathrm{f}_{i})$  and $\mathbb{g}(\mathrm{f}_{\mathrm{new}}) \leq \mathbb{g}(\mathrm{f}_{j})$, which means that merging two columns by $\phi$ 
decreases their informativeness.
\end{dif}
\begin{dif} \label{def:delta}
Given matrices $\mathcal{X}_{1} , \mathcal{X}_{2}$, define $\delta({\mathcal{X}_{1}}) , \delta({\mathcal{X}_{2}})$ to be the matrices obtained by iteratively removing two equal rows from both $\mathcal{X}_{1}$ and $\mathcal{X}_{2}$. Two rows are equal if their corresponding entries are equal. Hence, $\delta(\mathcal{X}_{1})$ and $\delta(\mathcal{X}_{2})$ don't share any equal rows. Define $\mathcal{S}_{1} , \mathcal{S}_{2}$ as the set of indices corresponding to the removed rows from $\X_{1} , \X_{2}$ respectively. For a non-negative integer $\mathrm{t}$, the pair $( \mathcal{X}_{1} , \mathcal{X}_{2} )$ has ${t}$ degrees of regularity, if rows in each of $\delta(\mathcal{X}_{1})$ and $\delta(\mathcal{X}_{2})$ can be partitioned into a set of clusters, where each cluster has exactly $2^\mathrm{t}$ elements, and all of the rows in each cluster are equal to one another. Since rows in any matrix can be partitioned into clusters of size $2^0=1$., the degree of regularity for any pair of matrices is at least zero.\end{dif}
\subsection{Column Reductions}

\begin{lem} [Column Elimination]\label{lem:chernoffloss}
Omitting a non-critical pair of columns from $\mathcal{X}_{1}$ and $\mathcal{X}_{2}$ results in an unequal pair $( \mathcal{X}^{'}_{1} , \mathcal{X}^{'}_{2} )$, where 
\begin{align}
\mathcal{C}(P_{\mathcal{X}^{'}_{1}} , P_{\mathcal{X}^{'}_{2}}) \leq \mathcal{C}(P_{\mathcal{X}_{1}} , P_{\mathcal{X}_{2}}). \label{ineq:first}
\end{align}
The equality holds if the eliminated columns are identical, having either all zero or all one entries. Note that by the definition, every pair of non-critical matrices $\mathcal{X}_{1},\mathcal{X}_{2}$ has at least one non-critical pair of columns.
\end{lem}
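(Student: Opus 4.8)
\emph{Proof proposal.} The plan is to read "delete a column" as a statistical data‑processing operation and to use monotonicity of Chernoff information under it. Fix the non‑critical column index $\ell$. On the output side, removing the $\ell$th column of a matrix amounts to marginalizing out the $\ell$th coordinate of the observed sequence; write $\pi_\ell:\{0,1\}^{\mathrm{L'}}\to\{0,1\}^{\mathrm{L'}-1}$ for this coordinate projection. Because the channel factorizes across coordinates, $\mathcal{P}(\mathcal{M}\mid\mathcal{Y})=\prod_l p(\mathcal{M}(l)\mid\mathcal{Y}(l))$, summing out the $\ell$th output coordinate of $P_{\mathcal{X}_i}$ produces exactly the mixture induced by the matrix with its $\ell$th column deleted and the same row multiplicities kept; that is, $P_{\mathcal{X}'_i}=(\pi_\ell)_\#P_{\mathcal{X}_i}$ for $i=1,2$. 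The fact that $\mathcal{X}'_1\neq\mathcal{X}'_2$ is immediate from the definition of a non‑critical pair of columns, and the closing remark of the lemma is simply the logical negation of the definition of a critical pair of matrices.

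The content is then the monotonicity inequality \eqref{ineq:first}, which I would get from the Chernoff‑coefficient data‑processing inequality applied coordinatewise. For each fixed $\lambda\in[0,1]$,
\begin{align}
\sum_{z}P_{\mathcal{X}'_1}^{\lambda}(z)\,P_{\mathcal{X}'_2}^{1-\lambda}(z)
&=\sum_{z}\Bigl(\sum_{\pi_\ell(\mathcal{M})=z}P_{\mathcal{X}_1}(\mathcal{M})\Bigr)^{\!\lambda}\Bigl(\sum_{\pi_\ell(\mathcal{M})=z}P_{\mathcal{X}_2}(\mathcal{M})\Bigr)^{\!1-\lambda}\nonumber\\
&\ \geq\ \sum_{z}\sum_{\pi_\ell(\mathcal{M})=z}P_{\mathcal{X}_1}^{\lambda}(\mathcal{M})\,P_{\mathcal{X}_2}^{1-\lambda}(\mathcal{M})
=\sum_{\mathcal{M}}P_{\mathcal{X}_1}^{\lambda}(\mathcal{M})\,P_{\mathcal{X}_2}^{1-\lambda}(\mathcal{M}),\nonumber
\end{align}
where the middle inequality is the superadditivity of $(a,b)\mapsto a^{\lambda}b^{1-\lambda}$ on $\mathbb{R}_{\ge 0}^2$ — itself a consequence of this function being concave and positively homogeneous of degree one (equivalently, a termwise application of the weighted AM–GM / Hölder inequality). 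Taking $-\log$ reverses the inequality for each $\lambda$, and since $\mathcal{C}(P_1,P_2)=\max_{0\le\lambda\le1}\bigl(-\log\sum_x P_1^{\lambda}(x)P_2^{1-\lambda}(x)\bigr)$, taking the maximum over $\lambda$ on both sides preserves it, which is exactly $\mathcal{C}(P_{\mathcal{X}'_1},P_{\mathcal{X}'_2})\le\mathcal{C}(P_{\mathcal{X}_1},P_{\mathcal{X}_2})$.

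For the equality clause, suppose the $\ell$th columns of $\mathcal{X}_1$ and $\mathcal{X}_2$ are identical and all‑zero (the all‑one case is symmetric). Then the $\ell$th output coordinate is an independent $\mathrm{Bernoulli}(\mathrm{f})$ draw under \emph{both} hypotheses, so $P_{\mathcal{X}_i}=P_{\mathcal{X}'_i}\otimes B$ with a common factor $B$. The Chernoff coefficient factorizes, $\sum_{\mathcal{M}}P_{\mathcal{X}_1}^{\lambda}P_{\mathcal{X}_2}^{1-\lambda}=\bigl(\sum_{z}P_{\mathcal{X}'_1}^{\lambda}P_{\mathcal{X}'_2}^{1-\lambda}\bigr)\bigl(\sum_{b}B(b)\bigr)=\sum_{z}P_{\mathcal{X}'_1}^{\lambda}P_{\mathcal{X}'_2}^{1-\lambda}$ for every $\lambda$, so the optimizing $\lambda$'s and the optimal values coincide and \eqref{ineq:first} holds with equality.

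\emph{Main obstacle.} The only step with genuine content is the monotonicity argument; once the superadditivity of $a^{\lambda}b^{1-\lambda}$ is available, the rest is bookkeeping. The subtle point is the interaction with the outer optimization over $\lambda$: the per‑$\lambda$ inequalities must be combined by taking a \emph{maximum} (legitimate because $\mathcal{C}$ is the $\max$ over $\lambda$ of the per‑$\lambda$ exponents), and one should note that the optimal $\lambda$ for the reduced pair generally differs from that for the original pair — which is precisely why the inequality \eqref{ineq:first} can be strict.
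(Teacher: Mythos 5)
Your argument is correct and is essentially the paper's proof in different clothing: the ``superadditivity of $(a,b)\mapsto a^{\lambda}b^{1-\lambda}$'' you invoke is exactly the paper's Lemma~\ref{lem:basic}, applied per fixed $\lambda$ to the two terms $\alpha_\ell\in\{0,1\}$ being marginalized, followed by the same optimization over $\lambda$. Your equality argument via factorization $P_{\mathcal{X}_i}=P_{\mathcal{X}'_i}\otimes B$ is an equivalent (and arguably cleaner) restatement of the paper's ratio condition $P_{\mathcal{X}_1}(\bar\alpha,0)/P_{\mathcal{X}_1}(\bar\alpha,1)=P_{\mathcal{X}_2}(\bar\alpha,0)/P_{\mathcal{X}_2}(\bar\alpha,1)$, so no gap remains.
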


\begin{lem}[Column Merging] \label{lem:chernoffloss2}
Given a critical pair of $\mathrm{N} \times \mathrm{L}^{\prime}$ matrices $( \mathcal{X}_{1} , \mathcal{X}_{2} )$, for every $1 \leq i < j \leq \mathrm{L^{\prime}} \leq \mathrm{L}$, $( \phi^{\mathrm{L^{\prime}}}_{i,j}(\mathcal{X}_{1}) , \phi^{\mathrm{L^{\prime}}}_{i,j}(\mathcal{X}_{2}) )$ is also a critical pair. Furthermore, if $( \mathcal{X}_{1},\mathcal{X}_{2} )$ has at least $t$ degrees of regularity, then $( \phi^{\mathrm{L^{\prime}}}_{i,j}(\mathcal{X}_{1}) ,\phi^{\mathrm{L^{\prime}}}_{i,j}(\mathcal{X}_{2}) )$ has at least $t+1$ degrees of regularity, and we have
\begin{align}
\mathcal{C}(P_{\phi^{\mathrm{L^{\prime}}}_{i,j}(\mathcal{X}_{1})} , P_{\phi^{\mathrm{L^{\prime}}}_{i,j}(\mathcal{X}_{2})}) \leq \mathcal{C}(P_{\mathcal{X}_{1}} , P_{\mathcal{X}_{2}}). \label{ineq:second}
\end{align}
Merging reduction does not change sets $\mathcal{S}_{1}$ and $\mathcal{S}_{2}$ designated in Definition \ref{def:delta}. Moreover, a sufficient condition for equality to hold is that there exists a permutation $\pi$ on rows of $\mathcal{X}_{2}$ and a partitioning $\Re$ of $\{1, \ldots , \mathrm{N} \}$ into pairs, such that:
\begin{enumerate}
\item [1)] For each pair $( s , r ) \in \Re$ and every $k \neq i , j$
\begin{equation*}
\!\!\!\!\!\!\! \mathcal{X}_{1}(s,k) =  \mathcal{X}_{1}(r,k) 
= \mathcal{X}_{2}(\pi(s),k) = \mathcal{X}_{2}(\pi(r),k). \nonumber
\end{equation*} 
\item[2)]  For each pair $( s , r ) \in \Re$
\begin{gather}
\!\!\!\!\!\!\! \mathcal{X}_{1}(s,i) \neq \mathcal{X}_{1}(r,i), \mathcal{X}_{2}(\pi(s) , i) \neq \mathcal{X}_{2}(\pi(r) , i)  \nonumber, \\
\!\!\!\!\!\!\! \mathcal{X}_{1}(s,j) \neq \mathcal{X}_{1}(r,j),
\mathcal{X}_{2}(\pi(s) , j) \neq \mathcal{X}_{2}(\pi(r) , j). \nonumber
\end{gather}
\end{enumerate}
\end{lem}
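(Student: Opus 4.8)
The plan is to realize the column-merging operator $\phi^{\mathrm{L}'}_{i,j}$ at the level of the observation spaces and then invoke the data-processing inequality for Chernoff information. Let $T\colon\{0,1\}^{\mathrm{L}'}\to\{0,1\}^{\mathrm{L}'-1}$ be the deterministic map that deletes coordinates $i$ and $j$ and appends the bit $y_i\oplus y_j$. A short computation shows that for every $\mathrm{N}\times\mathrm{L}'$ matrix $\mathcal{X}$ the pushforward of $P_{\mathcal{X}}$ through $T$ equals $P_{\phi^{\mathrm{L}'}_{i,j}(\mathcal{X})}$; the only nontrivial point is that the modulo-two sum of two independent bits passed through binary symmetric channels with flip rates $\mathrm{f}_i$ and $\mathrm{f}_j$ is distributed as the corresponding bit passed through one with flip rate $\mathrm{f}_{\mathrm{new}}$, which is exactly why $\mathrm{f}_{\mathrm{new}}$ is chosen so that $\mathbb{g}(\mathrm{f}_{\mathrm{new}})=\mathbb{g}(\mathrm{f}_i)\mathbb{g}(\mathrm{f}_j)$. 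Since $\sum_x P^{\lambda}(x)Q^{1-\lambda}(x)$ can only increase when $P,Q$ are pushed through a deterministic (hence any) channel — a one-line consequence of H\"older's inequality applied fiber by fiber — we immediately obtain $\mathcal{C}(P_{\phi^{\mathrm{L}'}_{i,j}(\mathcal{X}_1)},P_{\phi^{\mathrm{L}'}_{i,j}(\mathcal{X}_2)})\le\mathcal{C}(P_{\mathcal{X}_1},P_{\mathcal{X}_2})$, which is \eqref{ineq:second}.

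Next I would dispatch the combinatorial assertions. Because $\phi^{\mathrm{L}'}_{i,j}$ acts row by row, it commutes with every row permutation and with the deletion of any column $\ell\neq i,j$. Using criticality of columns $i$ and $j$ one checks that the new (XOR) column is critical: deleting it from $\phi^{\mathrm{L}'}_{i,j}(\mathcal{X})$ leaves $\mathcal{X}$ with columns $i$ and $j$ both removed, and the row permutation witnessing criticality of column $i$ still matches $\mathcal{X}_1$ to $\mathcal{X}_2$ there. Using criticality of each column $\ell\neq i,j$ together with the equivariance just noted, column $\ell$ stays critical; a separate short check gives $\phi^{\mathrm{L}'}_{i,j}(\mathcal{X}_1)\neq\phi^{\mathrm{L}'}_{i,j}(\mathcal{X}_2)$, so the image is again a critical pair. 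For the degrees of regularity, pick row permutations $\sigma_i,\sigma_j$ with $\mathcal{X}_1(r,k)=\mathcal{X}_2(\sigma_i(r),k)$ for all $k\neq i$ (and the analogue for $\sigma_j$), and set $\rho=\sigma_j^{-1}\circ\sigma_i$. Chasing these identities shows that $\rho$ pairs every non-shared row $r$ of $\mathcal{X}_1$ with a row $\rho(r)$ that agrees with it on every coordinate except $i,j$ and disagrees with it at \emph{both} $i$ and $j$; hence $\phi^{\mathrm{L}'}_{i,j}$ sends $r$ and $\rho(r)$ to equal rows, and it also shows that an equal-row class of $\delta(\mathcal{X}_1)$ and its $(i,j)$-flipped class have the same size. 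Consequently each equal-row class of $\phi^{\mathrm{L}'}_{i,j}(\delta(\mathcal{X}_1))$ has cardinality twice a multiple of $2^{t}$, i.e.\ a multiple of $2^{t+1}$, and can be repartitioned into clusters of size $2^{t+1}$; the same goes for $\mathcal{X}_2$. One also verifies that $\phi^{\mathrm{L}'}_{i,j}$ introduces no equal row across the two matrices that was not already present, so $\mathcal{S}_1,\mathcal{S}_2$ and the operator $\delta$ are unchanged.

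Finally, for the sufficiency of the equality criterion I would show that its two conditions force both $P_{\mathcal{X}_1}$ and $P_{\mathcal{X}_2}$ to be invariant under the involution $\Phi$ of $\{0,1\}^{\mathrm{L}'}$ that simultaneously flips coordinates $i$ and $j$. Grouping the rows of $\mathcal{X}_1$ according to $\Re$, the pair $(s,r)$ contributes $G\,[\,p_{\mathrm{f}_i}(y_i\mid b)\,p_{\mathrm{f}_j}(y_j\mid c)+p_{\mathrm{f}_i}(y_i\mid 1-b)\,p_{\mathrm{f}_j}(y_j\mid 1-c)\,]$ to $P_{\mathcal{X}_1}(y)$, where $b=\mathcal{X}_1(s,i)$, $c=\mathcal{X}_1(s,j)$, and $G=\prod_{k\neq i,j}p_{\mathrm{f}_k}(y_k\mid\mathcal{X}_1(s,k))$ is common to the two rows of the pair by the first condition; using $p_{\mathrm{f}}(1-y\mid x)=p_{\mathrm{f}}(y\mid 1-x)$ this bracket is unchanged by $\Phi$, and the permutation $\pi$ makes the analogous statement hold for $\mathcal{X}_2$. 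Since the fibers of $T$ are exactly the orbits $\{y,\Phi(y)\}$, the likelihood ratio $P_{\mathcal{X}_1}/P_{\mathcal{X}_2}$ is constant on each fiber, so the fiber-wise H\"older estimate from the first paragraph is an equality for every $\lambda$, whence $\mathcal{C}(P_{\phi^{\mathrm{L}'}_{i,j}(\mathcal{X}_1)},P_{\phi^{\mathrm{L}'}_{i,j}(\mathcal{X}_2)})=\mathcal{C}(P_{\mathcal{X}_1},P_{\mathcal{X}_2})$.

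I expect the regularity and $\mathcal{S}_i$-invariance part to be the main obstacle: it rests entirely on extracting the rigid pairing structure of a critical pair from the matchings $\sigma_i,\sigma_j$ and on controlling how that structure interacts with the row-elimination operator $\delta$, in particular verifying that no new shared rows are created. The Chernoff bound, by contrast, is essentially free once the pushforward identity $T_{*}P_{\mathcal{X}}=P_{\phi^{\mathrm{L}'}_{i,j}(\mathcal{X})}$ has been established.
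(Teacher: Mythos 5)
Your proposal is correct and follows essentially the same route as the paper: your fiber-wise H\"older/data-processing step is the paper's application of its elementary inequality $a^{\lambda}b^{1-\lambda}+c^{\lambda}d^{1-\lambda}\leq(a+c)^{\lambda}(b+d)^{1-\lambda}$ to the sum grouped by $(\bar{\alpha},\alpha_{i}\oplus\alpha_{j})$, your permutations $\sigma_{i},\sigma_{j}$ and the pairing $\rho=\sigma_{j}^{-1}\circ\sigma_{i}$ are exactly the paper's $\Pi^{(i)},\Pi^{(j)}$ and its $(i,j)$-match quadruples, and the equality criterion is verified in both cases by showing the two mixtures are constant on each fiber of $T$. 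The one step you flag as the remaining obstacle --- that merging creates no new shared row, so $\delta$, $\mathcal{S}_{1}$, $\mathcal{S}_{2}$ are unchanged and $\phi^{\mathrm{L}'}_{i,j}(\mathcal{X}_{1})\neq\phi^{\mathrm{L}'}_{i,j}(\mathcal{X}_{2})$ --- closes in one line inside your own setup: if $T(r_{1})=T(s_{2})$ for non-equal rows $r_{1}\in\delta(\mathcal{X}_{1})$ and $s_{2}\in\delta(\mathcal{X}_{2})$, then $s_{2}$ must be the $(i,j)$-flip of $r_{1}$, but so is $\rho(r_{1})\in\delta(\mathcal{X}_{1})$, contradicting that $\delta(\mathcal{X}_{1})$ and $\delta(\mathcal{X}_{2})$ share no row.
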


Note that both of our reductions preserve the inequality assumption on the matrices.  

 \subsection{Proof Sketch}
For an unequal pair $\{ \mathcal{X}_{1} , \mathcal{X}_{2}\}$, we apply reduction by eliminating non-critical pairs of columns, until we arrive at a critical pair. Suppose $\alpha$ columns are eliminated in this phase, and the remaining columns are $\{ \ell_{1} , ... , \ell_{\mathrm{L}-\alpha}\}$. Note that $\alpha(\mathcal{X}_{1},\mathcal{X}_{2})$ is a function of $\mathcal{X}_{1}$ and $\mathcal{X}_{2}$. Now we apply reduction by merging two columns in each step. At the end, $\mathrm{L} = 1$, and we have two unequal one-dimensional \textsf{BMM}s, where clusters are represented by our one-dimensional matrices $\mathcal{X}^{\mathrm{br}}_{1}$ and $\mathcal{X}^{\mathrm{br}}_{2}$, with flip rate $\mathrm{f}_{\mathrm{br}}$. Hence, for $\mathrm{b} \in \{ 0 , 1\}$ we have
\begin{align}
\!\!\!\!\! \mathrm{P}(\mathrm{b} \mid \mathcal{X}^{\mathrm{br}}_{u}) = \frac{\sum_{i=1}^{\mathrm{N}} \mathrm{f}_{\mathrm{br}}^{\mathcal{X}^{\mathrm{br}}_{u,i} \oplus b} (1-\mathrm{f}_{\mathrm{br}})^{\overline{\mathcal{X}^{\mathrm{br}}_{u,i} \oplus \mathrm{b}}}}{\mathrm{N}}, \ \ u \in \{ 1,2 \} , \label{eq:bernoulli}
\end{align}
where $\mathcal{X}^{\mathrm{br}}_{u,i}$ is the $i$th element of $\mathcal{X}^{\mathrm{br}}_{u}$.  Thus, $\mathrm{P}(\mathrm{b} \mid \mathcal{X}^{\mathrm{br}}_{1})$ and $\mathrm{P}(\mathrm{b} \mid \mathcal{X}^{\mathrm{br}}_{2})$ are Bernoulli distributions, with parameters denoted by $p_{\mathrm{br},1}$ and $p_{\mathrm{br},2}$. Furthermore, equation \eqref{eq:g} reveals
\begin{align}
\mathbb{g}(\mathrm{f}_{\mathrm{br}}) = \prod_{i = 1}^{\mathrm{L}-\alpha} \mathbb{g}(\mathrm{f}_{\ell_{i}})  = (1 - 2\mathrm{f})^{\mathrm{L} - \alpha}. \label{eq:productg}
\end{align}
 Note that $\mathrm{f}_{\mathrm{br}} = \frac{1 - \mathbb{g}(\mathrm{f}_{\mathrm{br}})}{2}$. Therefore, according to the definition of $\phi$, we conclude that the resulted matrices and flip rates are independent of the order of merging.
Note that we have merged $\mathrm{L} - \alpha$ columns into one column. Hence, by Lemma \ref{lem:chernoffloss2}, the pair $\{ \mathcal{X}^{\mathrm{br}}_{1} , \mathcal{X}^{\mathrm{br}}_{2} \}$ has $\mathrm{L} - \alpha - 1$ degrees of regularity. According to equation (\ref{eq:bernoulli}), this implies that there exist non-negative integers $a_{1} , a_{2} , c_{1} , c_{2}$, with $a_{1} + c_{1} = a_{2} + c_{2}$, such that for $u \in \{ 1,2 \}$,
\begin{align*}
 & p_{\mathrm{br},u} = \mathrm{P}(\mathrm{b} \mid \mathcal{X}^{\mathrm{br}}_{u})  = \\
 &\frac{\sum_{i \notin \mathcal{S}_{u}}^{\mathrm{N}} \mathrm{f}_{\mathrm{br}}^{\mathcal{X}^{\mathrm{br}}_{u,i} \oplus b} (1-\mathrm{f}_{\mathrm{br}})^{\overline{\mathcal{X}^{\mathrm{br}}_{u,i} \oplus \mathrm{b}}}}{\mathrm{N}}
  +  \frac{\sum_{i \in \mathcal{S}_{u}}^{\mathrm{N}} \mathrm{f}_{\mathrm{br}}^{\mathcal{X}^{\mathrm{br}}_{u,i} \oplus b} (1-\mathrm{f}_{\mathrm{br}})^{\overline{\mathcal{X}^{\mathrm{br}}_{u,i} \oplus \mathrm{b}}}}{\mathrm{N}} \\
& = a_{u}\frac{2^{\mathrm{L} - \alpha}\mathrm{f}_{\mathrm{br}}}{\mathrm{N}} + c_{u}\frac{2^{\mathrm{L}-\alpha}(1-\mathrm{f}_{\mathrm{br}})}{\mathrm{N}} + \mathbb{C}. \nonumber
\end{align*}
 where $\mathbb{C}$ is a constant, independent of $u$. Since $\mathcal{X}^{\mathrm{br}}_{1} \neq \mathcal{X}^{\mathrm{br}}_{2}$ we have $p_{\mathrm{br},1} \neq p_{\mathrm{br},2}$, which yeilds
\begin{align}
| p_{\mathrm{br},1} - p_{\mathrm{br},2} | \geq \frac{2^{\mathrm{L}-\alpha}(1-2\mathrm{f}_{\mathrm{br}})}{\mathrm{N}} =  \frac{2^{\mathrm{L}-\alpha}\mathbb{g}(\mathrm{f}_{br})}{\mathrm{N}}. \nonumber
\end{align}
Regarding inequality \eqref{eq:productg} we obtain
\begin{align}
| p_{\mathrm{br},1} - p_{\mathrm{br},2} | \geq \frac{[2(1-2\mathrm{f})]^{\mathrm{L}-\alpha}}{2\mathrm{N}} \label{eq:l1-distance}.
\end{align}

 The above inequality reveals a lower bound on $\mathbb{L}_1$ distance of Bernoulli's $\mathrm{P}(b \mid \mathcal{X}^{\mathrm{br}}_{1})$ and $\mathrm{P}(b \mid \mathcal{X}^{\mathrm{br}}_{2})$, which is a function of the number of elimination and merging reductions applied to $\{ \mathcal{X}_{1} , \mathcal{X}_{2} \}$. The following lemma finds the minimum \CI between two Bernoulli random variables, given that their $\mathbb{L}_{1}$ distance is lower bounded by $2\epsilon$.
 \begin{lem} \label{lem:bernoulli}
Given a pair of Bernoulli distributions with probabilities $p$ and $q$, define $\mathcal{C}_{\mathrm{br}}(p,q)$ to be the \CI between them. Suppose we have $|p - q| \geq \epsilon$. Then,
\begin{align}
\mathcal{C}_{\mathrm{br}}(p,q) \geq \mathcal{C}_{\mathrm{br}}(\frac{1-\epsilon}{2} , \frac{1 + \epsilon}{2}) = -\log(\sqrt{1 - \epsilon^2}) \label{eq:bernoulli2}.
\end{align}
\end{lem}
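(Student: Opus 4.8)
The plan is to lower-bound $\mathcal{C}_{\mathrm{br}}(p,q)$ by the single evaluation of the Chernoff exponent at $\lambda=\tfrac12$ (the negative log of the Bhattacharyya coefficient), and then to verify that this already yields the claimed \emph{tight} bound. Writing $F_\lambda(p,q)=p^\lambda q^{1-\lambda}+(1-p)^\lambda(1-q)^{1-\lambda}$, we have $\mathcal{C}_{\mathrm{br}}(p,q)=-\log\bigl(\min_{0\le\lambda\le1}F_\lambda(p,q)\bigr)$; since $-\log$ is decreasing it is enough to upper-bound $\min_\lambda F_\lambda(p,q)$, and taking $\lambda=\tfrac12$ gives $\min_\lambda F_\lambda(p,q)\le F_{1/2}(p,q)=\sqrt{pq}+\sqrt{(1-p)(1-q)}$, hence $\mathcal{C}_{\mathrm{br}}(p,q)\ge-\log\!\bigl(\sqrt{pq}+\sqrt{(1-p)(1-q)}\bigr)$.

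The analytic core is the pointwise inequality
\[
\sqrt{pq}+\sqrt{(1-p)(1-q)}\ \le\ \sqrt{\,1-(p-q)^2\,},\qquad p,q\in[0,1].
\]
I would prove it by squaring both non-negative sides: the left square equals $1-p-q+2pq+2\sqrt{pq(1-p)(1-q)}$ and the right square equals $1-p^2+2pq-q^2$, so after cancelling common terms the inequality is equivalent to $2\sqrt{pq(1-p)(1-q)}\le p(1-p)+q(1-q)$, which is precisely AM--GM for the pair $\{p(1-p),\,q(1-q)\}$. Equality holds exactly when $p(1-p)=q(1-q)$, i.e.\ when $p=q$ or $p=1-q$.

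Combining: since $|p-q|\ge\epsilon$ and $t\mapsto\sqrt{1-t^2}$ is decreasing on $[0,1]$, we get $\sqrt{1-(p-q)^2}\le\sqrt{1-\epsilon^2}$, so $\mathcal{C}_{\mathrm{br}}(p,q)\ge-\log\sqrt{1-\epsilon^2}$. Finally I would identify this with the value at the symmetric pair $p^\star=\tfrac{1-\epsilon}{2}$, $q^\star=\tfrac{1+\epsilon}{2}$: here $p^\star=1-q^\star$, so $F_\lambda(p^\star,q^\star)$ is symmetric under $\lambda\leftrightarrow1-\lambda$ and its only stationary point in $(0,1)$ is $\lambda=\tfrac12$ (which beats the endpoint value $F_0=F_1=1$), whence $\mathcal{C}_{\mathrm{br}}(p^\star,q^\star)=-\log F_{1/2}(p^\star,q^\star)=-\log\!\bigl(2\sqrt{p^\star q^\star}\bigr)=-\log\sqrt{1-\epsilon^2}$. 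There is no real obstacle here; the only subtlety worth flagging is why collapsing the Chernoff minimization to the single point $\lambda=\tfrac12$ costs nothing at the extremal configuration — it is forced by the symmetry $1-p^\star=q^\star$, which is also exactly the equality case of the AM--GM step, so the otherwise-loose Bhattacharyya bound is tight precisely where it must be.
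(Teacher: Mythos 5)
Your proof is correct, and it takes a genuinely different --- and considerably more elementary --- route than the paper's. The paper first shows, by differentiating in $\delta=q-p$, that one may assume $|p-q|=\epsilon$ exactly; it then proves a separate monotonicity lemma stating that along the line $q=p+\epsilon$ the Chernoff information is decreasing in $p$ on $[0,\tfrac{1-\epsilon}{2}]$, which requires a two-case analysis exploiting the concavity/convexity of $x\mapsto\log\tfrac{x}{1-x}$ on either side of $\tfrac12$, a secant-line argument, and a proof that the optimal $\lambda^*$ stays in $[0,\tfrac12]$ in the second regime; finally a symmetry lemma evaluates the Chernoff information at the extremal pair. You bypass all of this by dropping to the Bhattacharyya coefficient $F_{1/2}(p,q)=\sqrt{pq}+\sqrt{(1-p)(1-q)}$, which always lower-bounds the Chernoff exponent, and then proving the clean pointwise inequality $F_{1/2}(p,q)\le\sqrt{1-(p-q)^2}$ by squaring and applying AM--GM to the pair $p(1-p)$, $q(1-q)$; your algebra checks out. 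You also correctly flag the one point that makes this shortcut legitimate: the AM--GM step is tight exactly when $p+q=1$, which is where the extremal pair sits, and there the $\lambda\leftrightarrow 1-\lambda$ symmetry (together with convexity of $F_\lambda$ in $\lambda$, each term being the exponential of a linear function) forces the Chernoff minimizer to be $\lambda=\tfrac12$, so the Bhattacharyya relaxation costs nothing at the minimizing configuration. What the paper's longer route buys is a stronger structural fact --- monotonicity of $\mathcal{C}_{\mathrm{br}}(p,p+\epsilon)$ in $p$ --- but that is not needed for the statement of the lemma, and your argument yields the exact equality cases ($p=q$ or $p+q=1$) of the intermediate inequality for free.
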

Combining equation \eqref{eq:l1-distance} with Lemma.\ref{lem:bernoulli}, and regarding the fact that reductions do not increase \CI, we obtain a lower bound on \CI as 
\begin{align}
\mathcal{LB}(\mathcal{X}_{1} , \mathcal{X}_{2}) = \mathcal{C}_{br}(\frac{1-\wp}{2} , \frac{1+\wp}{2} ) = -\log(\sqrt{1 - \wp^2}), \nonumber
\end{align} 
where
\begin{align}
\wp =  \frac{[2(1-2\mathrm{f})]^{L-\alpha}}{2\mathrm{N}}. \label{eq:wp}
\end{align}
Now, instead of minimizing $\mathcal{C}(P_{\mathcal{X}_{1}} , P_{\mathcal{X}_{2}})$, we seek to minimize the lower bound $\mathcal{LB}(\mathcal{X}_{1} , \mathcal{X}_{2})$. To this end, we have to minimize $\wp$ with respect to $\mathcal{X}_{1}$ and $\mathcal{X}_{2}$, for which based on the value of $\mathrm{f}$, there are two cases.
\subsubsection{Case 1: $\mathrm{f} \leq \frac{1}{4}$}
In this regime we have $2(1-2\mathrm{f}) \geq 1$. Thus, according to equation \eqref{eq:wp}, we have to minimize $\mathrm{L} - \alpha$, which leads to $\alpha = \mathrm{L}-1$. \label{eq:alpha} Therefore, 
\begin{align}
\tau_{1} = \min_{\mathcal{X}_{1} \neq \mathcal{X}_{2}} \mathcal{LB}(\mathcal{X}_{1} , \mathcal{X}_{2}) =  -\log(\sqrt{1 - \eta_{\mathrm{N}}^2}) \ , \ \eta_{\mathrm{N}} = \frac{1-2\mathrm{f}}{\mathrm{N}}. \nonumber
\end{align}
For every $\mathcal{X}_{1} \neq \mathcal{X}_{2}$, $\mathcal{C}(P_{\mathcal{X}_{1}} , P_{\mathcal{X}_{2}})$ is lower bounded by $\tau_{1}$.

Equation $\alpha = \mathrm{L}-1$ points out that the pair  $\mathcal{X}^*_{1} , \mathcal{X}^*_{2}$ which minimizes $\mathcal{LB}$ should have $\mathrm{L}-1$ non-critical columns, eliminated one by one iteratively. On the other hand, to illustrate the tightness of $\tau_{1}$, we should define $\mathcal{X}^*_{1} , \mathcal{X}^*_{2}$ in such a way that $\mathcal{C}(P_{\mathcal{X}^*_{1}} , P_{\mathcal{X}^*_{2}}) = \mathcal{LB}(\mathcal{X}^*_{1} , \mathcal{X}^*_{2})$. This implies that all the inequalities \eqref{ineq:first},\eqref{eq:l1-distance},\eqref{eq:bernoulli2} should turn into equalities.
\begin{lem} \label{lem:example1}
Suppose $\mathrm{N} = 2n + 1$. Consider two sequences $\upsilon_{1} , \upsilon_{2}$ with length $\mathrm{L}$, and Hamming distance one. Define matrices $\mathcal{X}^*_{1}$ and $\mathcal{X}^*_{2}$, such that $\mathcal{X}^*_{1}$ has $n$ replicas of $\upsilon_{1}$ and $n+1$ replicas of $\upsilon_{2}$ as its rows, while $\mathcal{X}^*_{2}$ has $n$ replicas of $\upsilon_{2}$ and $n+1$ replicas of $\upsilon_{1}$. Then, for defined $\mathcal{X}^*_{1}$ and $\mathcal{X}^*_{2}$, $\mathcal{C}(P_{\mathcal{X}^*_{1}} , P_{\mathcal{X}^*_{2}})$ meets the lower bound $\tau_{1}$.
\end{lem}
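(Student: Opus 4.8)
\emph{Strategy.} The proof sketch already gives $\tau_{1}\leq\mathcal{C}(P_{\mathcal{X}_{1}},P_{\mathcal{X}_{2}})$ for every unequal pair, so the plan is to establish only the matching upper bound $\mathcal{C}(P_{\mathcal{X}^{*}_{1}},P_{\mathcal{X}^{*}_{2}})\leq\tau_{1}$. To do this I would run the reduction of the proof sketch on this particular pair and verify that the three inequalities \eqref{ineq:first}, \eqref{eq:l1-distance} and \eqref{eq:bernoulli2} all hold with equality, so that the whole chain collapses to $\mathcal{C}(P_{\mathcal{X}^{*}_{1}},P_{\mathcal{X}^{*}_{2}})=-\log(\sqrt{1-\eta_{\mathrm{N}}^{2}})=\tau_{1}$.

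\emph{Eliminating the inessential columns.} Let $\ell^{*}$ be the unique coordinate where $\upsilon_{1}$ and $\upsilon_{2}$ disagree; take $\upsilon_{1}(\ell^{*})=0$, $\upsilon_{2}(\ell^{*})=1$. First I would observe that for every $k\neq\ell^{*}$ the $k$th columns of $\mathcal{X}^{*}_{1}$ and of $\mathcal{X}^{*}_{2}$ are identical and constant, all equal to $\upsilon_{1}(k)=\upsilon_{2}(k)$, and that deleting any such column keeps the pair unequal (the $\ell^{*}$ columns still carry the value $0$ with multiplicities $n$ versus $n+1$). Hence all $\mathrm{L}-1$ of these columns are non-critical and stay non-critical when any of the others is removed, so $\alpha(\mathcal{X}^{*}_{1},\mathcal{X}^{*}_{2})=\mathrm{L}-1$ and no merging step is ever used. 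I would then eliminate these $\mathrm{L}-1$ identical all-zero/all-one columns one at a time via Lemma \ref{lem:chernoffloss}, using its equality clause at each step, to obtain $\mathcal{C}(P_{\mathcal{X}^{\mathrm{br}}_{1}},P_{\mathcal{X}^{\mathrm{br}}_{2}})=\mathcal{C}(P_{\mathcal{X}^{*}_{1}},P_{\mathcal{X}^{*}_{2}})$, where the surviving $\mathrm{N}\times 1$ matrices (still with flip rate $\mathrm{f}$) are $\mathcal{X}^{\mathrm{br}}_{1}$ with $n$ zeros and $n+1$ ones, and $\mathcal{X}^{\mathrm{br}}_{2}$ with $n+1$ zeros and $n$ ones.

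\emph{The scalar case.} Next I would plug these into \eqref{eq:bernoulli} with $\mathrm{f}_{\mathrm{br}}=\mathrm{f}$ to get $p_{\mathrm{br},1}=\frac{n\mathrm{f}+(n+1)(1-\mathrm{f})}{\mathrm{N}}=\frac{n+1-\mathrm{f}}{\mathrm{N}}$ and $p_{\mathrm{br},2}=\frac{(n+1)\mathrm{f}+n(1-\mathrm{f})}{\mathrm{N}}=\frac{n+\mathrm{f}}{\mathrm{N}}$, whose sum is $\frac{2n+1}{\mathrm{N}}=1$; thus $p_{\mathrm{br},1}=\frac{1+\eta_{\mathrm{N}}}{2}$, $p_{\mathrm{br},2}=\frac{1-\eta_{\mathrm{N}}}{2}$ and $|p_{\mathrm{br},1}-p_{\mathrm{br},2}|=\frac{1-2\mathrm{f}}{\mathrm{N}}=\eta_{\mathrm{N}}$, which is exactly the right-hand side of \eqref{eq:l1-distance} for $\alpha=\mathrm{L}-1$, so that bound is tight. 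Since $(p_{\mathrm{br},1},p_{\mathrm{br},2})$ is precisely the symmetric extremal pair $(\frac{1+\epsilon}{2},\frac{1-\epsilon}{2})$ with $\epsilon=\eta_{\mathrm{N}}$, Lemma \ref{lem:bernoulli} yields $\mathcal{C}_{\mathrm{br}}(p_{\mathrm{br},1},p_{\mathrm{br},2})=-\log(\sqrt{1-\eta_{\mathrm{N}}^{2}})$. Chaining, $\mathcal{C}(P_{\mathcal{X}^{*}_{1}},P_{\mathcal{X}^{*}_{2}})=\mathcal{C}(P_{\mathcal{X}^{\mathrm{br}}_{1}},P_{\mathcal{X}^{\mathrm{br}}_{2}})=\mathcal{C}_{\mathrm{br}}(p_{\mathrm{br},1},p_{\mathrm{br},2})=\tau_{1}$, which with the lower bound proves the claim.

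\emph{Main obstacle.} The arithmetic is short; the one thing that needs care is the bookkeeping in the elimination phase --- checking that each of the $\mathrm{L}-1$ deletions really falls under the \emph{equality} case of Lemma \ref{lem:chernoffloss} (the deleted column being identical and all-zero or all-one in both matrices) and that the pair stays unequal, hence non-critical, at every intermediate stage, so the chain is a chain of equalities and not merely of $\leq$'s. A small related point worth spelling out is that, because $\mathrm{L}-\alpha=1$, no column merging occurs, the flip rate stays $\mathrm{f}$, and $\mathcal{X}^{\mathrm{br}}_{u}$ genuinely induces a Bernoulli distribution with parameter $p_{\mathrm{br},u}$, so that $\mathcal{C}(P_{\mathcal{X}^{\mathrm{br}}_{1}},P_{\mathcal{X}^{\mathrm{br}}_{2}})$ really equals $\mathcal{C}_{\mathrm{br}}(p_{\mathrm{br},1},p_{\mathrm{br},2})$.
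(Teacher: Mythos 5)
Your proposal is correct and follows essentially the same route as the paper's proof: eliminate the $\mathrm{L}-1$ constant, identical columns via the equality case of Lemma \ref{lem:chernoffloss}, reduce to the Bernoulli pair $(\frac{1-\eta_{\mathrm{N}}}{2},\frac{1+\eta_{\mathrm{N}}}{2})$, and evaluate its Chernoff information in closed form. The only cosmetic difference is that the paper invokes Lemma \ref{lem:symmetry} directly for the final Bernoulli computation, whereas you cite the equality already recorded in Lemma \ref{lem:bernoulli}, which the paper itself derives from Lemma \ref{lem:symmetry}.
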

\subsubsection{Case 2: $\mathrm{f} > \frac{1}{4}$}
We attain $2(1-2\mathrm{f}) < 1$. Hence, we have to maximize $\mathrm{L} - \alpha$ which results in $\alpha = 0$. Therefore,
\begin{align}
 \tau_{2} = \min_{\mathcal{X}_{1} \neq \mathcal{X}_{2}}\mathcal{LB}(\mathcal{X}_{1} , \mathcal{X}_{2}) = -\log(\sqrt{1 - \epsilon_{\mathrm{L},\mathrm{N}}^2}), \nonumber
\end{align}
where
$\epsilon_{\mathrm{L},\mathrm{N}} = \frac{[2(1-2\mathrm{f})]^{\mathrm{L}}}{2\mathrm{N}}. \nonumber$
For every $\mathcal{X}_{1} \neq \mathcal{X}_{2}$, $\mathcal{C}(P_{\mathcal{X}_{1}} , P_{\mathcal{X}_{2}})$ is lower bounded by $\tau_{2}$.

Equation $\alpha = 0$ states that the pair  $\mathcal{X}_{1} , \mathcal{X}_{2}$  which minimizes $\mathcal{LB}$ should be critical. Moreover, in order to show the tightness of $\tau_{2}$, we need to find a critical pair $\mathcal{X}^*_{1} , \mathcal{X}^*_{2}$, such that the inequalities \eqref{ineq:second},\eqref{eq:l1-distance},\eqref{eq:bernoulli2} turn into equalities. 
\begin{lem} \label{lem:sufficientcondition}
$( \mathcal{X}_{1} , \mathcal{X}_{2} )$ is a critical pair if and only if there exists a number $n^*$, such that rows of one of $\delta(\mathcal{X}_{1})$ or $\delta(\mathcal{X}_{2})$ consist of $n^*$ replicas of each sequence in $\mathcal{U}_{\mathrm{even}}$, while rows of the other one consist of $n^*$ replicas of each sequence in $\mathcal{U}_{\mathrm{odd}}$. Furthermore, if we apply merging reductions on $\{ \mathcal{X}_{1} , \mathcal{X}_{2} \}$, a sufficient condition on $\mathcal{X}_{1}, \mathcal{X}_{2}$  to incur  no information loss in all of the reduction steps is that there exist integers $n_{1} $ and $n_{2}$, such that rows of $\mathcal{X}_{1}$ consist of $n_{1}$ replicas of  each sequence in $\mathcal{U}_{\mathrm{even}}$ and $n_{2}$ replicas of each sequence in $\mathcal{U}_{\mathrm{odd}}$, while rows of $\mathcal{X}_{2}$ consist of $n_{2}$ replicas of each sequence in $\mathcal{U}_{\mathrm{even}}$, and $n_{1}$ replicas of each sequence in $\mathcal{U}_{\mathrm{odd}}$. 
\end{lem}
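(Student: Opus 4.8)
The plan is to split the statement into its two parts: first the characterization of critical pairs, and then the sufficient condition for lossless merging. For the first part, recall that $(\mathcal{X}_1,\mathcal{X}_2)$ is critical iff \emph{every} pair of columns is critical, i.e. deleting the $\ell$th column of both makes the matrices equal (as multisets of rows). Since $\delta$ only removes shared rows and leaves $\mathcal{S}_1,\mathcal{S}_2$ fixed, it suffices to work with $\delta(\mathcal{X}_1),\delta(\mathcal{X}_2)$, which share no rows. First I would show that criticality of column $\ell$ forces a fixed-point-free involution $\sigma_\ell$ on the rows of $\delta(\mathcal{X}_1)\cup\delta(\mathcal{X}_2)$ that flips exactly the $\ell$th coordinate: removing column $\ell$ identifies the multisets, so each row of $\delta(\mathcal{X}_1)$ is matched to a row of $\delta(\mathcal{X}_2)$ agreeing on all coordinates except possibly $\ell$, and since the two $\delta$-matrices share no rows they must \emph{differ} in coordinate $\ell$. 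Having such an involution for every $\ell$ simultaneously means the row-multiset of $\delta(\mathcal{X}_1)$ is invariant under flipping any single coordinate, hence under the whole group $\{0,1\}^{\mathrm{L}}$ acting by coordinate-flips; an orbit of this action is all of $\{0,1\}^{\mathrm{L}}$, so the multiset is a constant multiple $n^*$ of all binary strings — but then $\delta(\mathcal{X}_1)=\delta(\mathcal{X}_2)$, a contradiction unless we instead track the parity: flipping one coordinate swaps $\mathcal{U}_{\mathrm{even}}\leftrightarrow\mathcal{U}_{\mathrm{odd}}$, so the correct conclusion is that one of $\delta(\mathcal{X}_1),\delta(\mathcal{X}_2)$ consists of $n^*$ copies of each string in $\mathcal{U}_{\mathrm{even}}$ and the other of $n^*$ copies of each string in $\mathcal{U}_{\mathrm{odd}}$. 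The converse is immediate: for such a pair, deleting any column and flipping the induced parity exhibits the required matching, so every column is critical.

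For the second part, I would invoke the sufficient condition for equality in Lemma~\ref{lem:chernoffloss2}: at each merging step $\phi^{\mathrm{L}'}_{i,j}$ we need a permutation $\pi$ of the rows of the second matrix and a pairing $\Re$ of $\{1,\dots,\mathrm{N}\}$ such that paired rows agree off coordinates $i,j$ and disagree in both $i$ and $j$, with $\pi$ carrying the pairing of $\mathcal{X}_1$ to that of $\mathcal{X}_2$. Starting from $\mathcal{X}_1$ with $n_1$ copies of each even string and $n_2$ of each odd string (and $\mathcal{X}_2$ with the roles of $n_1,n_2$ swapped), I would exhibit $\Re$ explicitly: pair up a row $v$ with the row $v\oplus(e_i+e_j)$; this flips two coordinates, hence preserves parity, so within $\mathcal{X}_1$ it pairs even strings with even strings and odd with odd, and the multiplicities $n_1$ (resp.\ $n_2$) match so a perfect pairing exists. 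Condition~(1) holds since $v$ and $v\oplus(e_i+e_j)$ agree on every coordinate $k\neq i,j$; condition~(2) holds since they differ in both $i$ and $j$. For $\pi$, note that after applying $\phi^{\mathrm{L}'}_{i,j}$ the new column is $\mathcal{X}^{(i)}\oplus\mathcal{X}^{(j)}$, and the resulting matrix again has the "$n_1$ of each even / $n_2$ of each odd on $\mathrm{L}'-1$ coordinates" form (a short parity bookkeeping: the map $v\mapsto(v\text{ with coords }i,j\text{ deleted, }v_i\oplus v_j\text{ appended})$ is a bijection $\{0,1\}^{\mathrm{L}'}\to\{0,1\}^{\mathrm{L}'-1}\times\{0,1\}$ and it is parity-preserving). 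Hence the structural hypothesis is reproduced at the next level, and the argument goes through by induction on the number of columns until $\mathrm{L}=1$.

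The main obstacle I anticipate is the bookkeeping that the "even/odd replica" structure is \emph{preserved} under $\phi^{\mathrm{L}'}_{i,j}$, together with checking that the pairing $\Re$ on $\mathcal{X}_1$ can be transported by an honest row-permutation $\pi$ to a pairing on $\mathcal{X}_2$ satisfying the same two conditions — because $\mathcal{X}_2$ has the \emph{swapped} multiplicities, one must verify the counts still line up pairwise rather than just globally. I would handle this by choosing $\pi$ to be the identity-up-to-relabeling that sends the $n_1$ even-string block of $\mathcal{X}_1$ onto the $n_1$ odd-string block of $\mathcal{X}_2$ via a fixed bijection $\mathcal{U}_{\mathrm{even}}\to\mathcal{U}_{\mathrm{odd}}$ commuting with $v\mapsto v\oplus(e_i+e_j)$ (e.g.\ flipping a fixed coordinate $k_0\notin\{i,j\}$, which exists whenever $\mathrm{L}'\geq 3$; the boundary cases $\mathrm{L}'\le 2$ are checked directly). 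This makes conditions~(1) and~(2) for $\mathcal{X}_2$ follow from those for $\mathcal{X}_1$ automatically, and closes the induction.
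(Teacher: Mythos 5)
Your argument for the first part is correct and takes a genuinely different route from the paper. The paper proves the characterization by induction on the number of columns: it merges a pair of columns, invokes the induction hypothesis on the reduced pair, and lifts the even/odd structure back through the match-quadruple bookkeeping of Lemma~\ref{lem:partitioning}. You instead note that criticality of column $\ell$ yields a bijection between the rows of $\delta(\mathcal{X}_1)$ and $\delta(\mathcal{X}_2)$ flipping \emph{exactly} coordinate $\ell$ (the ``exactly'' because the two $\delta$-matrices share no row), so the row-multiset of $\delta(\mathcal{X}_2)$ is the image of that of $\delta(\mathcal{X}_1)$ under every single-coordinate flip; composing two such flips shows the multiset of $\delta(\mathcal{X}_1)$ is invariant under the even-weight subgroup of $(\{0,1\}^{\mathrm{L}},\oplus)$, whose orbits are precisely $\mathcal{U}_{\mathrm{even}}$ and $\mathcal{U}_{\mathrm{odd}}$, and disjointness of the two multisets kills one of the two parity classes. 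This is cleaner than the paper's induction, but you should state the invariance claim for \emph{pairs} of flips directly rather than first asserting (and then retracting) invariance under single flips, since a single flip maps one $\delta$-multiset onto the \emph{other}, not onto itself.

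For the second part there is a concrete error in your choice of $\pi$. Condition~(1) of the equality criterion in Lemma~\ref{lem:chernoffloss2} requires $\mathcal{X}_2(\pi(s),k)=\mathcal{X}_1(s,k)$ for \emph{every} $k\neq i,j$; a permutation realized by flipping a fixed coordinate $k_0\notin\{i,j\}$ violates this at $k=k_0$. The correct choice --- the one implicit in the paper's quadruple $(\mathbb{s}^{(1)},\mathbb{s}^{(4)},\mathbb{s}^{(2)},\mathbb{s}^{(3)})$ with $\mathbb{s}^{(2)}=\mathbb{s}^{(1)}\oplus e_i$, $\mathbb{s}^{(3)}=\mathbb{s}^{(1)}\oplus e_j$ --- is to let $\pi$ send a row with value $v$ in $\mathcal{X}_1$ to a row with value $v\oplus e_i$ in $\mathcal{X}_2$ (flipping $j$ works equally well). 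This map swaps parity, hence carries the multiset of $\mathcal{X}_1$ ($n_1$ even, $n_2$ odd) onto that of $\mathcal{X}_2$; it trivially commutes with your pairing map $v\mapsto v\oplus e_i\oplus e_j$; it leaves all coordinates outside $\{i,j\}$ untouched, so condition~(1) holds; and the images $v\oplus e_i$ and $(v\oplus e_i\oplus e_j)\oplus e_i=v\oplus e_j$ of a pair disagree at both $i$ and $j$, so condition~(2) holds. With that correction your $\Re$, the parity-preservation of $\phi^{\mathrm{L}'}_{i,j}$ (which is $2$-to-$1$ onto $\{0,1\}^{\mathrm{L}'-1}$ and doubles the multiplicities to $2n_1,2n_2$ --- not a bijection as written), and the induction all go through, and no special treatment of $\mathrm{L}'\leq 2$ is needed.
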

\begin{lem}  \label{lem:example2}
Suppose N = $2^{L-1}(2n+1)$, for a non-negative integer $n$. Define matrix $\X^*_{1}$ to have $n$ replicas of each sequence in $U_{even}$ and $n+1$ replicas of each sequence in $U_{odd}$ as its rows. Similarly, define $\X^*_{2}$ to have $n+1$ replicas of each sequence in $U_{even}$ and $n$ replicas of each sequence in $\mathcal{U}_{\mathrm{odd}}$. Then, $\mathcal{C}(P_{\mathcal{X}^*_{1}} , P_{\mathcal{X}^*_{2}})$ meets the lower bound $\tau_{2}$.
\end{lem}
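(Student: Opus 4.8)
\emph{Proof plan.} The plan is to feed the pair $(\mathcal{X}^*_{1},\mathcal{X}^*_{2})$ through the generic lower-bounding pipeline of the proof sketch and check that \emph{every} inequality along the way is attained with equality, so that $\mathcal{C}(P_{\mathcal{X}^*_{1}},P_{\mathcal{X}^*_{2}})$ collapses exactly onto $\tau_{2}$; the reverse bound $\mathcal{C}(P_{\mathcal{X}^*_{1}},P_{\mathcal{X}^*_{2}})\ge\tau_{2}$ is already in hand from the $\mathrm{f}>\tfrac14$ analysis. First I would do the bookkeeping: since $|\mathcal{U}_{\mathrm{even}}|=|\mathcal{U}_{\mathrm{odd}}|=2^{\mathrm{L}-1}$, each of $\mathcal{X}^*_{1},\mathcal{X}^*_{2}$ has $n\cdot 2^{\mathrm{L}-1}+(n+1)\cdot 2^{\mathrm{L}-1}=\mathrm{N}$ rows and the two have distinct row-multisets, so the pair is unequal.

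Next I would show $(\mathcal{X}^*_{1},\mathcal{X}^*_{2})$ is a \emph{critical} pair, which forces $\alpha=0$ in the reduction (no column elimination). Computing $\delta$: the common rows of $\mathcal{X}^*_{1}$ and $\mathcal{X}^*_{2}$ are exactly $n$ copies of every sequence in $\mathcal{U}_{\mathrm{even}}$ together with $n$ copies of every sequence in $\mathcal{U}_{\mathrm{odd}}$, so $\delta(\mathcal{X}^*_{1})$ is one copy of each sequence in $\mathcal{U}_{\mathrm{odd}}$ and $\delta(\mathcal{X}^*_{2})$ is one copy of each sequence in $\mathcal{U}_{\mathrm{even}}$; the ``iff'' characterization in Lemma~\ref{lem:sufficientcondition}, with $n^{*}=1$, then certifies criticality. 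With $\alpha=0$, the merging phase combines all $\mathrm{L}$ columns into one, reaching one-dimensional \textsf{BMM}s $\mathcal{X}^{\mathrm{br}}_{1},\mathcal{X}^{\mathrm{br}}_{2}$ with flip rate $\mathrm{f}_{\mathrm{br}}$, $\mathbb{g}(\mathrm{f}_{\mathrm{br}})=(1-2\mathrm{f})^{\mathrm{L}}$. Since $\mathcal{X}^*_{1},\mathcal{X}^*_{2}$ have precisely the replica pattern of the sufficient-condition clause of Lemma~\ref{lem:sufficientcondition} (take $n_{1}=n$, $n_{2}=n+1$), Lemma~\ref{lem:chernoffloss2} guarantees that none of the $\mathrm{L}-1$ merges loses Chernoff information, so $\mathcal{C}(P_{\mathcal{X}^*_{1}},P_{\mathcal{X}^*_{2}})=\mathcal{C}_{\mathrm{br}}(p_{\mathrm{br},1},p_{\mathrm{br},2})$.

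To evaluate the right side, I would observe that merging all columns replaces each row by the XOR (parity) of its entries, so an even sequence becomes a $0$ and an odd sequence a $1$; by the preserved structure $\mathcal{X}^{\mathrm{br}}_{1}$ has $2^{\mathrm{L}-1}n$ zeros and $2^{\mathrm{L}-1}(n+1)$ ones, and $\mathcal{X}^{\mathrm{br}}_{2}$ has the two counts swapped. A one-line substitution into \eqref{eq:bernoulli} then gives $p_{\mathrm{br},1}+p_{\mathrm{br},2}=1$ and $|p_{\mathrm{br},1}-p_{\mathrm{br},2}|=2^{\mathrm{L}-1}\mathbb{g}(\mathrm{f}_{\mathrm{br}})/\mathrm{N}=[2(1-2\mathrm{f})]^{\mathrm{L}}/(2\mathrm{N})=\epsilon_{\mathrm{L},\mathrm{N}}$, so $\{p_{\mathrm{br},1},p_{\mathrm{br},2}\}=\{\tfrac{1-\epsilon_{\mathrm{L},\mathrm{N}}}{2},\tfrac{1+\epsilon_{\mathrm{L},\mathrm{N}}}{2}\}$. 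This makes \eqref{eq:l1-distance} an equality and lands exactly on the extremal configuration of Lemma~\ref{lem:bernoulli}, whence $\mathcal{C}_{\mathrm{br}}(p_{\mathrm{br},1},p_{\mathrm{br},2})=-\log(\sqrt{1-\epsilon_{\mathrm{L},\mathrm{N}}^{2}})=\tau_{2}$, which is the claim.

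The replica-count arithmetic is routine; the one point genuinely needing care is verifying that the no-loss clause of Lemma~\ref{lem:sufficientcondition} fires at \emph{every} one of the $\mathrm{L}-1$ merges, i.e.\ that the ``$n_{1}$ copies of each sequence in $\mathcal{U}_{\mathrm{even}}$ and $n_{2}$ copies of each in $\mathcal{U}_{\mathrm{odd}}$ for one matrix, swapped for the other'' structure is invariant under $\phi$. The reason is that merging two columns does not change a row's parity, and each reduced sequence has exactly two $\phi$-preimages, both of its own parity, so every multiplicity simply doubles; once that invariant is established the equality hypotheses of Lemma~\ref{lem:chernoffloss2} (the row permutation $\pi$ and the pairing $\Re$) are available at each step, and the argument closes.
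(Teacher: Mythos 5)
Your argument is correct, and it reaches $\tau_{2}$ by a genuinely different route from the paper. The paper proves Lemma~\ref{lem:example2} as the $\mathrm{R}=0$ specialization of Lemma~\ref{lem:nearoptimal}: it writes down the full $2^{\mathrm{L}}$-point output distributions $P_{\mathcal{X}^*_{1}}(\mathbb{s})$ and $P_{\mathcal{X}^*_{2}}(\mathbb{s})$ explicitly (they depend only on the parity of $\mathbb{s}$, via $\mathrm{E}_{\Ll}=\tfrac{1+\zeta}{2}$, $\mathrm{O}_{\Ll}=\tfrac{1-\zeta}{2}$ with $\zeta=(1-2\mathrm{f})^{\Ll}$), checks the swap-symmetry hypothesis of Lemma~\ref{lem:symmetry} directly on that space, and evaluates the resulting Bhattacharyya sum in closed form. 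You instead push the pair through the reduction pipeline: certify criticality via the ``iff'' of Lemma~\ref{lem:sufficientcondition} with $n^{*}=1$, invoke its sufficient no-loss clause (with $n_{1}=n$, $n_{2}=n+1$) so that all $\mathrm{L}-1$ merges preserve \textsf{CI} exactly, and then compute the terminal one-dimensional Bernoulli pair, which lands on $\bigl(\tfrac{1-\epsilon_{\mathrm{L},\mathrm{N}}}{2},\tfrac{1+\epsilon_{\mathrm{L},\mathrm{N}}}{2}\bigr)$; your parity-preservation/doubling argument for the invariance of the replica structure under $\phi$ is exactly what the paper establishes in the last paragraph of its proof of Lemma~\ref{lem:sufficientcondition}, so you may simply cite that. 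Your route is the more illuminating one for this lemma, since it exhibits the pair as the configuration at which every inequality of the lower-bounding chain (\ref{ineq:second}), (\ref{eq:l1-distance}), (\ref{eq:bernoulli2}) is tight, which is precisely how the proof sketch motivates the construction; the paper's direct computation buys generality, since it handles $\mathrm{R}>0$ (where the pair is no longer of the exact no-loss form and the reduction chain would not close with equalities) and yields the near-optimal upper bound of Theorem~\ref{thm:main}, part~3, in one stroke.
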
 
\subsection{Generalized Theorem} \label{thm:generalized}
We generalize our result to the case 
where we have a vector of parameters $\mathrm{F} = \{\mathrm{f}_{\ell}\}_{\ell=1}^\mathrm{L}$, such that the $\ell$th entry of each source symbol is flipped with probability $\mathrm{f}_{\ell}$. Hence, the $\ell$th column of $\mathcal{X^*}$ has flip probability $\mathrm{f}_{\ell}$. Similarly, define $\mathcal{C}^*(\mathrm{N},\mathrm{L},\mathrm{F}) = \min_{\mathcal{X}_{1} \neq \mathcal{X}_{2}} \mathcal{C}(P_{\mathcal{X}_{1}} , P_{\mathcal{X}_{2}})$.
\begin{thm} \label{thm:generalized}
Regarding above notations, let $\gamma = \left\vert \{  \mathrm{f}_{i} \mid \mathrm{f}_{i} > \frac{1}{4}\} \right\vert $.   Define $\mathcal{K} = \min(\gamma , \lfloor \log \mathrm{N} \rfloor + 1)$. In addition, define non-negative integers $\mathrm{k},\mathrm{R}$, where $\mathrm{k}=2n+1$, and 
$$\mathrm{N} = 2^{\mathcal{K}-1}\mathrm{k} + \mathrm{R} , \ \mathrm{R} < 2^{\mathcal{K}}.$$ 
Furthermore, let $\{  \mathrm{f}_{\lambda_{i}} \}_{i = 1}^{\mathcal{K}}$
 be the $\mathcal{K}$ largest flip rates. Let,
\begin{align}
\epsilon_{\mathcal{K}} = \frac{2^{\mathcal{K} - 1} \prod_{i=1}^{\mathcal{K}} (1-2\mathrm{f}_{\lambda_{i}})}{\mathrm{N}}. \nonumber
\end{align}
Then, we have
\begin{align}
  -\log(\sqrt{1 - \epsilon_{\mathcal{K}}^2}) \leq \mathcal{C}^*(\mathrm{N},\mathrm{L},\mathrm{F}) \nonumber \leq -\log(\sqrt{(\frac{\mathrm{N}-\mathrm{R}}{\mathrm{N}})^2 - \epsilon_{\mathcal{K}}^2} + \frac{\mathrm{R}}{\mathrm{N}}).  
\end{align}
\end{thm}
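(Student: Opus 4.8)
The strategy is to rerun the column-reduction machinery that underlies Theorem~\ref{thm:main}, carefully tracking \emph{which} columns survive. \emph{Lower bound.} Given any unequal pair $\mathcal{X}_1\neq\mathcal{X}_2$ of $\mathrm{N}\times\mathrm{L}$ matrices with column flip rates $\mathrm{f}_1,\dots,\mathrm{f}_\mathrm{L}$, repeatedly apply Lemma~\ref{lem:chernoffloss} to discard non-critical column pairs until a critical pair remains, supported on some $S\subseteq\{1,\dots,\mathrm{L}\}$; then apply Lemma~\ref{lem:chernoffloss2} $|S|-1$ times to collapse to a one-dimensional \textsf{BMM} whose flip rate satisfies $\mathbb{g}(\mathrm{f}_\mathrm{br})=\prod_{\ell\in S}\mathbb{g}(\mathrm{f}_\ell)$ by \eqref{eq:g}. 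Exactly as in the proof sketch, the collapsed pair has at least $|S|-1$ degrees of regularity, so the two Bernoulli parameters differ by a nonzero integer multiple of $2^{|S|-1}\mathbb{g}(\mathrm{f}_\mathrm{br})/\mathrm{N}$, and since $\delta(\mathcal{X}_u)$ is nonempty the same count forces $2^{|S|-1}\le\mathrm{N}$, i.e.\ $|S|\le\lfloor\log\mathrm{N}\rfloor+1$. With Lemma~\ref{lem:bernoulli} and the monotonicity of $x\mapsto-\log\sqrt{1-x^2}$ this gives, for every unequal pair,
\begin{align}
\mathcal{C}(P_{\mathcal{X}_1},P_{\mathcal{X}_2})\ \ge\ -\log\sqrt{1-\wp_S^2},\qquad \wp_S=\frac{2^{|S|-1}\prod_{\ell\in S}(1-2\mathrm{f}_\ell)}{\mathrm{N}}. \nonumber
\end{align}

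It then suffices to prove $\wp_S\ge\epsilon_{\mathcal{K}}$ for every admissible $S$ (nonempty, $|S|\le\lfloor\log\mathrm{N}\rfloor+1$). Writing $\wp_S=\tfrac{1}{2\mathrm{N}}\prod_{\ell\in S}2(1-2\mathrm{f}_\ell)$, each factor exceeds $1$ exactly when $\mathrm{f}_\ell<\tfrac14$ and lies below $1$ exactly when $\mathrm{f}_\ell>\tfrac14$, so $\prod_{\ell\in S}2(1-2\mathrm{f}_\ell)$ is minimized by letting $S$ consist of the columns with the \emph{largest} flip rates, up to the size cap. A short sorting/exchange argument shows this minimum equals $\prod_{i=1}^{\mathcal{K}}2(1-2\mathrm{f}_{\lambda_i})$ with $\mathcal{K}=\min(\gamma,\lfloor\log\mathrm{N}\rfloor+1)$, which rearranges to $2\mathrm{N}\,\epsilon_{\mathcal{K}}$ (and when $\gamma=0$ one simply uses $\wp_S\ge\tfrac{1}{2\mathrm{N}}$). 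Minimizing over all unequal pairs yields $\mathcal{C}^*(\mathrm{N},\mathrm{L},\mathrm{F})\ge-\log\sqrt{1-\epsilon_{\mathcal{K}}^2}$.

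\emph{Upper bound.} Let $\lambda_1,\dots,\lambda_{\mathcal{K}}$ index the $\mathcal{K}$ largest flip rates, put $\mu=\prod_{i=1}^{\mathcal{K}}(1-2\mathrm{f}_{\lambda_i})$, and let $\mathcal{U}_\mathrm{even},\mathcal{U}_\mathrm{odd}$ now denote the even/odd-weight patterns on just those $\mathcal{K}$ coordinates (all other coordinates fixed to $0$). Let $\mathcal{Y}_1,\mathcal{Y}_2$ be the $(\mathrm{N}-\mathrm{R})=2^{\mathcal{K}-1}\mathrm{k}$-row matrices carrying $n$ versus $n{+}1$ replicas of each sequence in $\mathcal{U}_\mathrm{even}$ and $n{+}1$ versus $n$ replicas of each sequence in $\mathcal{U}_\mathrm{odd}$; eliminating the $\mathrm{L}-\mathcal{K}$ all-zero columns and merging the rest (the no-loss condition of Lemma~\ref{lem:sufficientcondition} concerns only the multiset structure, so it is unaffected by non-uniform rates) collapses, as in Lemma~\ref{lem:example2}, to a Bernoulli pair with parameters $\tfrac{1\pm\mu/\mathrm{k}}{2}$, so $\mathcal{C}(P_{\mathcal{Y}_1},P_{\mathcal{Y}_2})=-\log\sqrt{1-(\mu/\mathrm{k})^2}$. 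Now let $\mathcal{X}^*_1,\mathcal{X}^*_2$ be $\mathcal{Y}_1,\mathcal{Y}_2$ augmented by $\mathrm{R}$ extra rows equal to the all-zero sequence in \emph{both} matrices, so $P_{\mathcal{X}^*_u}=\tfrac{\mathrm{N}-\mathrm{R}}{\mathrm{N}}P_{\mathcal{Y}_u}+\tfrac{\mathrm{R}}{\mathrm{N}}Q_0$ with common $Q_0$. The elementary inequality $(x_1+y)^{\lambda}(x_2+y)^{1-\lambda}\ge x_1^{\lambda}x_2^{1-\lambda}+y$ for $x_1,x_2,y\ge0$, $\lambda\in[0,1]$ (equality at $y=0$, and the $y$-derivative of the left-hand side is $\ge1$ by weighted AM--GM) applied termwise and summed gives $\sum_x P_{\mathcal{X}^*_1}^{\lambda}P_{\mathcal{X}^*_2}^{1-\lambda}\ge\tfrac{\mathrm{N}-\mathrm{R}}{\mathrm{N}}\sum_x P_{\mathcal{Y}_1}^{\lambda}P_{\mathcal{Y}_2}^{1-\lambda}+\tfrac{\mathrm{R}}{\mathrm{N}}$ for every $\lambda$; taking $-\log$ of the minimum over $\lambda$ and using $\tfrac{\mathrm{N}-\mathrm{R}}{\mathrm{N}}\sqrt{1-(\mu/\mathrm{k})^2}=\sqrt{((\mathrm{N}-\mathrm{R})/\mathrm{N})^2-\epsilon_{\mathcal{K}}^2}$ yields $\mathcal{C}^*(\mathrm{N},\mathrm{L},\mathrm{F})\le\mathcal{C}(P_{\mathcal{X}^*_1},P_{\mathcal{X}^*_2})\le-\log\!\big(\sqrt{((\mathrm{N}-\mathrm{R})/\mathrm{N})^2-\epsilon_{\mathcal{K}}^2}+\tfrac{\mathrm{R}}{\mathrm{N}}\big)$.

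The reductions and the Bernoulli computation are essentially reused from Theorem~\ref{thm:main}; the two genuinely new ingredients are (i) the combinatorial optimization that isolates $\mathcal{K}$ and the ``keep the noisiest columns'' rule in the lower bound, and (ii) the mixture inequality used to absorb the $\mathrm{R}$ shared rows into the Chernoff integral in the upper bound. I expect (ii) to be the main obstacle: it is exactly what produces the $+\mathrm{R}/\mathrm{N}$ term with the correct constant rather than a cruder estimate such as $\log\tfrac{\mathrm{N}}{\mathrm{N}-\mathrm{R}}$, and establishing the needed lower bound on $\sum_x P_{\mathcal{X}^*_1}^{\lambda}P_{\mathcal{X}^*_2}^{1-\lambda}$ \emph{uniformly in $\lambda$} is the only place the argument departs from the $\mathrm{R}=0$ case.
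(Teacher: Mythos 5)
Your proposal is correct and follows essentially the same route as the paper: column elimination and merging down to a one-dimensional \textsf{BMM}, the regularity/cluster-size argument giving both the $\mathbb{L}_1$ gap $2^{|S|-1}\prod_{\ell\in S}(1-2\mathrm{f}_\ell)/\mathrm{N}$ and the cap $|S|\le\lfloor\log\mathrm{N}\rfloor+1$, the ``keep the noisiest columns'' optimization that the paper leaves implicit in its definition of $\mathcal{K}$, and the even/odd construction padded with $\mathrm{R}$ shared all-zero rows for the upper bound. The ``mixture inequality'' you single out as the main obstacle is just Lemma~\ref{lem:basic} with $c=d$, which is precisely how step $(\mathrm{f})$ in the proof of Lemma~\ref{lem:nearoptimal} produces the $+\mathrm{R}/\mathrm{N}$ term, so nothing there departs from the $\mathrm{R}=0$ case.
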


\begin{cor} \label{cor:equalityconditions2}
In the previous theorem, equality occurs iff one of the conditions bellow takes place.
\begin{enumerate}
\item [1)] $N$ is a power of $2$ and $N \leq 2^{\gamma-1}$.
\item [2)] $N = 2^{\gamma-1}(2n + 1)$  for positive integer $n$. 
\end{enumerate}
\end{cor}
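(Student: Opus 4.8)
The corollary packages two logically independent facts. First, the lower and upper bounds of Theorem~\ref{thm:generalized} coincide exactly when the remainder $\mathrm{R}$ in the division $\mathrm{N}=2^{\mathcal{K}-1}\mathrm{k}+\mathrm{R}$ is zero; second, $\mathrm{R}=0$ is equivalent to the two stated arithmetic conditions on $\mathrm{N}$ and $\gamma$. I would prove the first by a direct manipulation of the two displayed expressions, and the second by a short computation with the $2$-adic valuation $v_2(\mathrm{N})$. This parallels Corollary~\ref{cor:equalityconditions} for the homogeneous channel, with $\gamma$ here playing the role of $\mathrm{L}$ there and $\mathcal{K}$ the role of $\mathcal{L}$.

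\textbf{Step 1: equality of the bounds $\Leftrightarrow \mathrm{R}=0$.} Write $y=\mathrm{R}/\mathrm{N}$, so the upper bound reads $-\log\big(\sqrt{(1-y)^2-\epsilon_{\mathcal{K}}^2}+y\big)$. Since $1-y=(\mathrm{N}-\mathrm{R})/\mathrm{N}=2^{\mathcal{K}-1}\mathrm{k}/\mathrm{N}$ and $2^{\mathcal{K}-1}\prod_i(1-2\mathrm{f}_{\lambda_i})\le 2^{\mathcal{K}-1}\le 2^{\mathcal{K}-1}\mathrm{k}$, we get $\epsilon_{\mathcal{K}}\le 1-y$, and $\epsilon_{\mathcal{K}}>0$ because the channel is non-degenerate; hence the square root is well defined. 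The two bounds are equal iff $\sqrt{(1-y)^2-\epsilon_{\mathcal{K}}^2}+y=\sqrt{1-\epsilon_{\mathcal{K}}^2}$. Theorem~\ref{thm:generalized} already gives ``upper $\ge$ lower'', i.e. $\sqrt{(1-y)^2-\epsilon_{\mathcal{K}}^2}+y\le\sqrt{1-\epsilon_{\mathcal{K}}^2}$, so in particular $y\le\sqrt{1-\epsilon_{\mathcal{K}}^2}$; thus I may isolate $\sqrt{(1-y)^2-\epsilon_{\mathcal{K}}^2}=\sqrt{1-\epsilon_{\mathcal{K}}^2}-y\ge 0$ and square. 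The $y^2$ and $-\epsilon_{\mathcal{K}}^2$ terms cancel, leaving $y\sqrt{1-\epsilon_{\mathcal{K}}^2}=y$, and since $\epsilon_{\mathcal{K}}>0$ this forces $y=0$, i.e. $\mathrm{R}=0$. The converse is immediate, both bounds collapsing to $-\log\sqrt{1-\epsilon_{\mathcal{K}}^2}$; and this common value is genuinely $\mathcal{C}^*(\mathrm{N},\mathrm{L},\mathrm{F})$ because the construction obtained by generalizing Lemma~\ref{lem:example2} attains it (split $\mathrm{N}/2^{\mathcal{K}-1}$, which is odd when $\mathrm{R}=0$, into $n$ and $n+1$ copies over $\mathcal{U}_{\mathrm{even}}$ and $\mathcal{U}_{\mathrm{odd}}$ on the $\mathcal{K}$ most informative coordinates).

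\textbf{Step 2: $\mathrm{R}=0$ as a divisibility statement.} Put $\mathrm{N}=2^{a}\mu$ with $\mu$ odd, so $a=v_2(\mathrm{N})$. By the definition of $\mathrm{k}$ (odd) and $\mathrm{R}<2^{\mathcal{K}}$, we have $\mathrm{R}=0$ iff $2^{\mathcal{K}-1}\mid\mathrm{N}$ with $\mathrm{N}/2^{\mathcal{K}-1}$ odd, i.e. iff $\mathcal{K}=a+1$. Now substitute $\mathcal{K}=\min(\gamma,\lfloor\log\mathrm{N}\rfloor+1)$ and split on the minimum. If $\gamma\le\lfloor\log\mathrm{N}\rfloor+1$, then $\mathcal{K}=\gamma$ and $\mathrm{R}=0\Leftrightarrow a=\gamma-1$, i.e. $\mathrm{N}=2^{\gamma-1}\mu$ with $\mu$ odd; this forces $\mathrm{N}\ge 2^{\gamma-1}$, so the case hypothesis is automatic, and we land on condition~2) when $\mu\ge 3$ and on $\mathrm{N}=2^{\gamma-1}$ when $\mu=1$. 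If $\gamma>\lfloor\log\mathrm{N}\rfloor+1$, then $\mathcal{K}=\lfloor\log\mathrm{N}\rfloor+1=a+1+\lfloor\log\mu\rfloor$, so $\mathcal{K}=a+1$ forces $\lfloor\log\mu\rfloor=0$, i.e. $\mu=1$ and $\mathrm{N}=2^{a}$ a power of two; the case hypothesis then reads $\mathrm{N}=2^{a}\le 2^{\gamma-2}$. Combining the branches, the power-of-two solutions are exactly $\mathrm{N}=2^{b}$ with $b\le\gamma-1$ (condition~1), the remaining ones are exactly $\mathrm{N}=2^{\gamma-1}(2n+1)$ with $n\ge 1$ (condition~2), and conversely each of 1) and 2) yields $\mathcal{K}=v_2(\mathrm{N})+1$ by the same bookkeeping, hence $\mathrm{R}=0$. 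This completes the equivalence.

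\textbf{Expected main obstacle.} Under this reading nothing is genuinely hard: Step~1 is one squaring, and Step~2 is routine once organized around $v_2(\mathrm{N})$ — the only care needed is exhaustiveness of the two-way split, the elementary fact that $\lfloor\log\mathrm{N}\rfloor=v_2(\mathrm{N})$ iff $\mathrm{N}$ is a power of two, and the bookkeeping at the boundary point $\mathrm{N}=2^{\gamma-1}$ (which sits in the first branch with $\mu=1$ yet is reported under condition~1). The one place that demands real work arises only if ``equality occurs'' is meant in the stronger sense that the lower bound $-\log\sqrt{1-\epsilon_{\mathcal{K}}^2}$ is \emph{attained}: then the ``only if'' direction needs an argument that no pair $\mathcal{X}_1\neq\mathcal{X}_2$ meets it when $\mathrm{R}\ge 1$. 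I would obtain this by tracking the equality cases of Lemmas~\ref{lem:chernoffloss}, \ref{lem:chernoffloss2} and \ref{lem:bernoulli} through the reduction: attainment forces $\alpha$ equal to the number of non-informative coordinates and all $\mathcal{K}$ merges to be lossless, hence (by Lemma~\ref{lem:sufficientcondition} and \eqref{eq:bernoulli}) a Bernoulli pair with $\mathcal{K}-1$ degrees of regularity, which makes $2^{\mathcal{K}-1}$ divide $\mathrm{N}$ minus a common count of removed rows — a constraint one then shows is incompatible with $0<\mathrm{R}<2^{\mathcal{K}}$.
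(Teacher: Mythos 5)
Your proposal is correct and follows exactly the route the paper intends: the parenthetical ``$R$ gets zero'' in Corollary~\ref{cor:equalityconditions} signals that equality is meant to be read as coincidence of the two bounds, which you correctly reduce to $\mathrm{R}=0$ by one squaring (using $\epsilon_{\mathcal{K}}>0$), and the remaining arithmetic is the $2$-adic case split on $\mathcal{K}=\min(\gamma,\lfloor\log\mathrm{N}\rfloor+1)$ that you carry out. The paper states this corollary without proof, so your Steps 1--2 simply supply the omitted details, and they check out, including the boundary case $\mathrm{N}=2^{\gamma-1}$ and the exclusion of even multiples such as $\mathrm{N}=2^{\gamma}$.
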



\section{Conclusion}\label{seq:con}
We have obtained an asymptotically tight upper bound for the  \textsf{ML} estimator in Binary Mixture Identification.  Our findings shows an amazing phase transition in the discrete space of distributions. When the noise level exceeds 0.25, a severe reduction in the minimum \CI distance is observed. We proposed a systematic procedure to tightly bound the \CI, which might be useful for bounding \CI in other desired spaces and probably extendable to continues spaces. Addressing the worst-case scenario, it would be of great interest to attain bounds for any pair of sources based on our methodologies.

\appendix
\begin{lem} \label{lem:basic}
For positive real numbers $a , b , c , d$ and for any $0 \leq \lambda \leq 1$, we have:
$$a^{\lambda}b^{1-\lambda} + c^{\lambda}d^{1-{\lambda}} \leq (a + c)^{\lambda}(b + d)^{1 - \lambda}$$ 
\end{lem}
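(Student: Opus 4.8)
The plan is to reduce the inequality to a normalized form and then apply the weighted arithmetic–geometric mean (Young's) inequality termwise. First I would divide both sides by the right-hand side, which is legitimate because $a+c>0$ and $b+d>0$; setting $p = a/(a+c)$ and $q = b/(b+d)$, so that $c/(a+c)=1-p$ and $d/(b+d)=1-q$ with $p,q\in(0,1)$, the claim becomes the cleaner statement
\begin{align}
p^{\lambda} q^{1-\lambda} + (1-p)^{\lambda} (1-q)^{1-\lambda} \le 1. \nonumber
\end{align}

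Next I would invoke Young's inequality in the form $x^{\lambda} y^{1-\lambda} \le \lambda x + (1-\lambda) y$, valid for $x,y \ge 0$ and $0 \le \lambda \le 1$ (an immediate consequence of the concavity of $\log$, or of convexity of $\exp$). Applying it once with $(x,y)=(p,q)$ and once with $(x,y)=(1-p,1-q)$ and adding the two bounds gives
\begin{align}
p^{\lambda} q^{1-\lambda} + (1-p)^{\lambda} (1-q)^{1-\lambda} \le \lambda p + (1-\lambda) q + \lambda(1-p) + (1-\lambda)(1-q) = 1, \nonumber
\end{align}
which is exactly the normalized inequality; multiplying back through by $(a+c)^{\lambda}(b+d)^{1-\lambda}$ recovers the lemma.

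There is essentially no serious obstacle: the only points needing (minor) care are checking that the normalization is well defined, which positivity of $a,b,c,d$ guarantees, and stating Young's inequality with the correct convention for the weights $\lambda$ and $1-\lambda$. One could instead phrase the whole argument as a direct application of Hölder's inequality to the two-term vectors, but the Young's-inequality route above is the shortest and avoids any degenerate-exponent bookkeeping. Tracking the equality case in Young's inequality shows that equality holds precisely when $p=q$, i.e.\ when $ad=bc$, so that $(a,b)$ and $(c,d)$ are proportional — which is exactly the configuration one needs when arranging for the reductions in Lemmas~\ref{lem:chernoffloss} and~\ref{lem:chernoffloss2} to be lossless in terms of \CI.
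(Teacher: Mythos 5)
Your proof is correct and is essentially the same as the paper's: the paper also normalizes via $x=a/(a+c)$, $y=b/(b+d)$ and applies $x^{\lambda}y^{1-\lambda}\le\lambda x+(1-\lambda)y$ (concavity of $\log$) to each of the two terms before adding. Your extra observation about the equality case $ad=bc$ is a pleasant bonus but not needed for the lemma.
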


\begin{proof}
From the convexity of log function,  for any $0 \leq \lambda \leq 1$, we have 
$$\lambda \log x + (1-\lambda) \log y \leq \log ( \lambda x + (1- \lambda)y).$$
This implies 
$$x^{\lambda}y^{1-\lambda} \leq \lambda x + (1- \lambda)y.$$

Setting $x =\frac{a}{a+c}$ and $y=\frac{b}{b+d}$ yields
$$(\frac{a}{a+c})^{\lambda} (\frac{b}{b + d})^{1-\lambda}  \leq \lambda(\frac{a}{a+c}) + (1-\lambda)(\frac{b}{b+d}). $$
Similarly, setting $x =\frac{a}{a+c}$ and $y=\frac{b}{b+d}$ yields
$$ (\frac{c}{a+c})^{\lambda}(\frac{d}{b+d})^{1-\lambda} \leq \lambda(\frac{c}{a+c}) + (1-\lambda)(\frac{d}{b+d}) .$$
Adding the two inequalities gives the desired inequality. 
\end{proof}

\begin{dif}[Match quadruple] \label{def:quadruple}
Given matrices $\X_{1}$ and $\X_{2}$, rows $s_{1},r_{1}$ from $\delta(\X_{1})$ and $s_{2},r_{2}$ from $\delta(\X_{2})$ are called an (i,j)-match quadruple, if
\begin{gather}
\!\!\!\!\!\!\!\!\! \delta(\mathcal{X}_{1})(s_{1},i) = \delta(\mathcal{X}_{2})(s_{2},i), \delta(\mathcal{X}_{1})(s_{1}, j) \neq \delta(\mathcal{X}_{2})(s_{2} , j)  \nonumber, \\
\!\!\!\!\!\!\!\!\! \delta(\mathcal{X}_{1})(r_{1},i) = \delta(\mathcal{X}_{2})(r_{2},i), \delta(\mathcal{X}_{1})(r_{1}, j) \neq \delta(\mathcal{X}_{2})(r_{2} , j)  \nonumber, \\
\!\!\!\!\!\!\!\!\! \delta(\mathcal{X}_{1})(s_{1},j) = \delta(\mathcal{X}_{2})(r_{2},j), \delta(\mathcal{X}_{1})(s_{1}, i) \neq \delta(\mathcal{X}_{2})(r_{2} , i)  \nonumber, \\
\!\!\!\!\!\!\!\!\! \delta(\mathcal{X}_{1})(s_{2},j) = \delta(\mathcal{X}_{2})(r_{1},j), \delta(\mathcal{X}_{1})(s_{2}, i) \neq \delta(\mathcal{X}_{2})(r_{1} , i)  \nonumber.
\end{gather}
For each column $k \neq i , j$:
\begin{equation*}
\!\!\!\!\!\!\! \delta(\mathcal{X}_{1})(s_{1},k) =  \delta(\mathcal{X}_{1})(s_{2},k) 
= \delta(\mathcal{X}_{2})(r_{1},k) = \delta(\mathcal{X}_{2})(r_{2},k). \nonumber
\end{equation*} 
\end{dif}

\begin{dif}
Let $\alpha=(\alpha_{1} , \cdots , \alpha_{L})$. For matrices $X_{1}$ and $X_{2}$ define 
\begin{equation*} 
f_{\lambda} (\X_{1} , \X_{2}) = \sum_{\alpha \in   \{ 0 , 1 \} ^ {\Ll}}   {P_{\X_{1}}(\alpha)}^{\lambda}{P_{\X_{2}}(\alpha)}^{1-\lambda},
\end{equation*}
for which we have
\begin{equation}
\mathcal{C}(P_{\X_{1}} , P_{\X_{2}}) = -\min_{\lambda \in [0,1]} \log(f_{\lambda} (\X_{1} , \X_{2})). \label{eq:chernoffdefinition}
\end{equation}
\end{dif}

\begin{proof}[\textbf{Proof of Lemma \ref{lem:chernoffloss}}]
Suppose that we arrive at matrices $\X^{\prime}_{1} , \X^{\prime}_{2}$ by removing the $i$th columns from $\X_{1} , \X_{2}$. Let $\bar \alpha = (\alpha_{1} , ... , \alpha^c_{i} , ... , \alpha_{\N})$ be the $n-1$ dimensional vector obtained by removing $\alpha_{i}$ from vector $\alpha$. For each $1 \leq \lambda \leq 1$, 
\begin{align}
& f_{\lambda}(\X_{1} , \X_{2})  = 
\sum_{\bar \alpha \in \{ 0 , 1 \}^{\Ll-1}} \sum_{\alpha_{\L} \in \{ 0 , 1 \}} {P_{\X_{1}}(\alpha)}^{\lambda}{P_{\X_{2}}(\alpha)}^{1-\lambda}  \stackrel{ \text{(a)}}{\leq} \nonumber \\
& \sum_{ \bar \alpha  \in \{ 0 , 1 \}^{\Ll-1}} \!\!\!\!\!\!\!\! \left[ P_{\X_{1}}(\overline{\alpha},0) + P_{\X_{1}}(\overline{\alpha},1)  \right]^{\lambda} \left[  P_{\X_{2}}(\overline{\alpha},0) + P_{\X_{2}}(\overline{\alpha},1)  \right]^{1 - \lambda} \nonumber \\
& = \sum_{\bar{\alpha}  \in \{ 0 , 1 \}^{\Ll-1}} \left[ P_{\X_{1}^{\prime}}(\bar{\alpha}) \right]^{\lambda} \left[  P_{ \X_{2}^{\prime}}(\bar{\alpha})  \right]^{1 - \lambda} \nonumber \\
& =  f_{\lambda} ( \X^{\prime}_1 , \X^{\prime}_2). \label{eq:colomitt} 
\end{align}
where inequality $(a)$ follows from Lemma $\ref{lem:basic}$. Combining the above inequality with equation \eqref{eq:chernoffdefinition}, inequality \eqref{ineq:first} is proved.

For the equality to happen, we need all the  inequalities in $(a)$ to become equality. This can be achieved if 
\begin{align}
\frac{P_{\X_{1}}(\overline{\alpha},0)}{P_{\X_{1}}(\overline{\alpha},1)} = \frac{P_{\X_{2}}(\overline{\alpha},0)}{P_{\X_{2}}(\overline{\alpha},1)}, \nonumber
\end{align}
for all $\bar{\alpha} \in \{ 0 , 1 \}^{\Ll-1}$. It can be readily shown that the condition will be met if the removed columns are identical and  have either all zero or all one entries. By this, the proof is accomplished.
\end{proof}

\begin{lem} \label{lem:partitioning}
For a critical pair $(\X_{1} , \X_{2})$, for given $1 \leq i < j \leq \Ll$, there exist a disjoint partition $\mathcal{T}$ of rows in $\delta({\X_{1}})$ and $\delta({\X_{2}})$ into (i,j)-match quadruples. 

\begin{proof}
According to the definition of critical pairs, for each $1 \leq i \leq \Ll$, removing the $i$th columns from both of $\delta(\X_{1}) , \delta(\X_{2})$ gives matrices $\delta^{\prime}(\X_{1}) = \delta^{\prime}(\X_{2})$. 
We refer to the permutation which maps the rows in $\delta^{\prime}(\X_{1})$ to rows in $\delta^{\prime}(\X_{2})$ as $\Pi^{(i)}$. 

For fixed $1 \leq i \leq j \leq \Ll$, given a desired row $s_{1}$ in $\delta(\X_{1})$, define 
$$s_{2} = \Pi^{(i)}(s_{1}) , r_{2} = \Pi^{(j)}(s_{1}) , r_{1} = (\Pi^{(i)})^{-1}(r_{2}).$$
By definition, $\delta(\X_{1}) , \delta(\X_{2})$ does not have any common row. Thus,
\begin{align}
& \delta(\X_{2})(s_{2} , i) = \overline{\delta(\X_{1})(s_{1} , i)} = \overline{\delta(\X_{2})(r_{2} , i)} = \delta(\X_{1})(r_{1} , i),  \label{eq:pi1}  \\ 
& \delta(\X_{2})(s_{2} , j) = \delta(\X_{1})(s_{1} , j) = \overline{\delta(\X_{2})(r_{2} , j)} = \overline{\delta(\X_{1})(r_{1} , j)}, \label{eq:pi2} \\
& \forall k \neq i , j: \nonumber \\
& \delta(\X_{2})(s_{2} , k) = \delta(\X_{1})(s_{1} , k) = \delta(\X_{2})(r_{2} , k) = \delta(\X_{1})(r_{1} , k) \label{eq:pi3} ,
\end{align}
 From equation \eqref{eq:pi3}, we obtain $\Pi^{(j)}(s_{2}) = r_{1}$. Hence, we can divide rows in $\delta(\X_{1}) , \delta(\X_{2})$ into quadruples, such that for each quadruple consisting of rows $s_{1} , r_{1}$ in $\delta(\X_{1})$, and $s_{2} , r_{2}$ in $\delta(\X_{2})$, we have
\begin{gather}
\Pi^{(i)}(s_{1}) = r_{1} , \Pi^{(j)}(s_{1}) =  r_{2} , \Pi^{(i)}(s_{2}) = r_{2} , \Pi^{(j)}(s_{2}) = r_{1}. \nonumber
\end{gather} 
This property, combined with equations \eqref{eq:pi1} and \eqref{eq:pi2}, directly yield to equations in Definition \ref{def:quadruple}, regarding (i,j)-match quadruples, which completes the proof of Lemma \ref{lem:partitioning}.
\end{proof}
\end{lem}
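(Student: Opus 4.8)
The plan is to extract from criticality, together with the fact that $\delta(\X_{1})$ and $\delta(\X_{2})$ share no common row, the strong statement that $\delta(\X_{2})$ is exactly $\delta(\X_{1})$ with column $i$ flipped, and likewise with column $j$. This forces $\delta(\X_{1})$ to be invariant under flipping columns $i$ and $j$ simultaneously, and the quadruple partition then falls out of that single invariance, avoiding any delicate cycle‑chasing.

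\emph{Step 1: $\delta(\X_{2})=\mathrm{flip}_{i}(\delta(\X_{1}))$.} Write $\X_{u}=\delta(\X_{u})\uplus\mathcal C$, where by Definition \ref{def:delta} the rows indexed by $\mathcal S_{u}$ form a content‑multiset $\mathcal C$ that is the \emph{same} for $u=1,2$, since rows are removed in equal‑content pairs. Criticality of the $i$th pair of columns says $\X_{1}$ and $\X_{2}$ have the same rows once column $i$ is erased; cancelling the shared copy of $\mathcal C$ (column $i$ erased) gives the same for $\delta(\X_{1})$ and $\delta(\X_{2})$. Now look at the fibre over a fixed erased row $w$: its preimages in $\delta(\X_{1})$ are some copies of $w$ with a $0$ in position $i$ and some with a $1$. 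If both values occurred, both of these full rows would lie in $\delta(\X_{1})$, hence neither would lie in $\delta(\X_{2})$, making the $\delta(\X_{2})$‑fibre over $w$ empty while the $\delta(\X_{1})$‑fibre is not — impossible, as the two fibres have equal size. So on each fibre $\delta(\X_{1})$ is constant in column $i$ and $\delta(\X_{2})$ is its complement there; summing over $w$ yields $\delta(\X_{2})=\mathrm{flip}_{i}(\delta(\X_{1}))$ as content‑multisets, and the same argument with $j$ gives $\delta(\X_{2})=\mathrm{flip}_{j}(\delta(\X_{1}))$. Composing, $\delta(\X_{1})=\mathrm{flip}_{i}(\mathrm{flip}_{j}(\delta(\X_{1})))$; writing $\mathrm{flip}_{ij}$ for the composite, $\delta(\X_{1})$ is $\mathrm{flip}_{ij}$‑invariant, so each row $v$ and $\mathrm{flip}_{ij}(v)$ occur in it with equal multiplicity.

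\emph{Step 2: building the quadruples.} Since $\mathrm{flip}_{ij}$ is a fixed‑point‑free involution on row‑contents — flipping two coordinates cannot return the same string — and it preserves multiplicities in $\delta(\X_{1})$, I can partition the rows of $\delta(\X_{1})$ into pairs $\{s,r\}$ with $r=\mathrm{flip}_{ij}(s)$. To such a pair I attach the two rows $\mathrm{flip}_{i}(s)$ and $\mathrm{flip}_{j}(s)$, which lie in $\delta(\X_{2})$ by Step 1; and because $\mathrm{flip}_{j}(s)=\mathrm{flip}_{i}(\mathrm{flip}_{ij}(s))=\mathrm{flip}_{i}(r)$, as $s$ runs over $\delta(\X_{1})$ the rows $\mathrm{flip}_{i}(s)$ exhaust $\delta(\X_{2})$ exactly once, so the four‑row blocks are disjoint and cover $\delta(\X_{1})\cup\delta(\X_{2})$. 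A one‑line bit check — $s$ and $\mathrm{flip}_{i}(s)$ differ only in column $i$, $s$ and $\mathrm{flip}_{j}(s)$ only in column $j$, and $\mathrm{flip}_{ij}(s)=\mathrm{flip}_{i}(\mathrm{flip}_{j}(s))$ — shows that, assigning $s_{1}:=s$, $r_{1}:=r$, $s_{2}:=\mathrm{flip}_{j}(s)$, $r_{2}:=\mathrm{flip}_{i}(s)$, every equality and inequality of Definition \ref{def:quadruple} is met. Hence each block is an $(i,j)$‑match quadruple and their collection is the partition $\mathcal T$.

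The part I expect to be the real obstacle is Step 1, and within it the ``constant on each fibre'' dichotomy: this is the one place where column‑deletion, the row‑removal operator $\delta$, and the disjointness of $\delta(\X_{1})$ and $\delta(\X_{2})$ genuinely interact, and it is exactly what rules out the naive bad scenario in which a row and its column‑$i$ flip both survive into $\delta(\X_{1})$. Everything after that is bookkeeping — the multiplicity matching used in Step 2 is free from the multiset identity, and the pairing $\{s,\mathrm{flip}_{ij}(s)\}$ may be chosen arbitrarily inside a class of equal rows. The only trivialities to clear are that $\delta(\X_{u})$ is nonempty (else $\X_{1}=\X_{2}$, against hypothesis) and $i\neq j$ (from $i<j$), so the construction is not vacuous.
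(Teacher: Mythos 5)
Your proof is correct, and it rests on the same key observation as the paper's — criticality plus the disjointness of $\delta(\X_{1})$ and $\delta(\X_{2})$ force the column-$i$ matching to be a bit-flip in column $i$ — but it packages that observation differently. The paper fixes permutations $\Pi^{(i)},\Pi^{(j)}$ between the column-deleted matrices and chases a $4$-cycle $s_{1}\to s_{2}\to r_{1}\to r_{2}$, which requires the slightly delicate claim that $\Pi^{(j)}(s_{2})=r_{1}$ for the \emph{already-chosen} permutation (when several rows share a content this needs the matching to be re-chosen within content classes; the paper asserts it directly from \eqref{eq:pi3}). You instead prove the global multiset identity $\delta(\X_{2})=\mathrm{flip}_{i}(\delta(\X_{1}))=\mathrm{flip}_{j}(\delta(\X_{1}))$ via the fibre dichotomy, deduce that $\delta(\X_{1})$ is invariant under the fixed-point-free involution $\mathrm{flip}_{ij}$, and read off the quadruple partition from that invariance. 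Your fibre argument is sound (if both values of bit $i$ appeared over some erased row $w$ in $\delta(\X_{1})$, the $\delta(\X_{2})$-fibre over $w$ would be empty while having equal cardinality — contradiction), and the final bit-check against Definition \ref{def:quadruple} with $s_{1}=s$, $r_{1}=\mathrm{flip}_{ij}(s)$, $s_{2}=\mathrm{flip}_{j}(s)$, $r_{2}=\mathrm{flip}_{i}(s)$ matches the intended (cross-symmetric) conditions. What your route buys is robustness to repeated rows and a stronger structural conclusion — $\delta(\X_{2})$ is \emph{exactly} the column-$\ell$ flip of $\delta(\X_{1})$ for every $\ell$ — which is essentially the characterization of critical pairs proved separately in Lemma \ref{lem:sufficientcondition}; what the paper's route buys is that it generalizes verbatim to the "new-$(i,j)$-match quadruple" bookkeeping used later in the proof of Lemma \ref{lem:chernoffloss2}.
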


\begin{proof}[\textbf{Proof of Lemma \ref{lem:chernoffloss2}}]
Let $\bar \alpha = (\alpha_{1} , ... , \alpha^c_{i} , ... ,  \alpha^c_{j} , ... , \alpha_{\N})$ be the $n-2$ dimensional vector obtained by removing $\alpha_{i} , \alpha_{j}$ from vector $\alpha$. We have
\begin{align*}
& f_{\lambda}(\X_{1} , \X_{2})  = 
\sum_{\bar \alpha \in \{ 0 , 1 \}^{\Ll-2}} \sum_{\substack{\mathrm{b} \in \{ 0 , 1 \} \\ \alpha_{i} \oplus \alpha_{j} =  \mathrm{b} }} {P_{\X_{1}}(\alpha)}^{\lambda}{P_{\X_{2}}(\alpha)}^{1-\lambda} \nonumber \\
& \stackrel{ \text{(b)}}{\leq} \sum_{\bar \alpha} \sum_{\mathrm{b} \in \{ 0 , 1 \}} [{ \sum_{\substack{\alpha_{i} \oplus \alpha_{j} =  \mathrm{b} }} P_{\X_{1}}(\alpha)}]^{\lambda} [{ \sum_{\substack{\alpha_{i} \oplus \alpha_{j} =  \mathrm{b} }} P_{\X_{2}}(\alpha)}]^{1-\lambda} \\
& = \sum_{\bar \alpha} \sum_{\mathrm{b} \in \{ 0 , 1 \}} [P_{\phi^\Ll_{i,j}(\X_{1})}(\bar \alpha , \mathrm{b})]^{\lambda} [P_{\phi^\Ll_{i,j}(\X_{2})}(\bar \alpha , \mathrm{b})]^{1-\lambda} \\
& = \sum_{\tilde{\alpha} \in \{0,1 \}^{\Ll - 1}} [P_{\phi^\Ll_{i,j}(\X_{1})}(\tilde \alpha)]^{\lambda} [P_{\phi^\Ll_{i,j}(\X_{2})}(\tilde \alpha)]^{1-\lambda} \\
& = f_{\lambda}(\phi^\Ll_{i,j}(\X_{1}) , \phi^\Ll_{i,j}(\X_{2})),
\end{align*}
where $(b)$ follows from Lemma \ref{lem:basic}. Combining the above inequality with equation \eqref{eq:chernoffdefinition} yields to inequality \eqref{ineq:second}. 

It can be readily checked that if the equality assumption of Lemma \ref{lem:chernoffloss2} is satisfied, then for $\Ll$ dimensional vectors $\alpha$ and $\alpha^{\prime}$, which have equal entries in each index $k \neq i , j$, and also $\alpha_{i} \oplus \alpha_{j} = \alpha^{\prime}_{i} \oplus \alpha^{\prime}_{j}$, we have
$$P_{\X_{u}}(\alpha) = P_{\X_{u}}(\alpha^{\prime}), \ u \in \{ 1,2\}.$$
This satisfies the equality condition of all the inequalities applied in $(b)$, and leads to an equality case of Lemma \ref{lem:chernoffloss2} as desired.

We index the new column obtained from merging columns $i,j$ by $\ell_{new}$. According to Lemma \ref{lem:partitioning}, we can partition rows of $\delta(\X_{1})$ and $\delta(\X_{2})$ into (i,j)-match quadruples. Consider a quadruple consisting of rows $s_{1} , r_{1}$ from $\delta(\X_{1})$ and $s_{2} , r_{2}$ from $\delta(\X_{2})$. If we name $\phi^\Ll_{i,j}(\delta(\X_{1}))$ as $\mathcal{D}_{1}$ and $\phi^\Ll_{i,j}(\delta(\X_{2}))$ as $\mathcal{D}_{2}$, by the properties of match quadruples, for each column $k \neq \ell_{new}$: 
\begin{align}
\mathcal{D}_{1}(s_{1} , k) = \mathcal{D}_{1}(r_{1} , k) = 
 \mathcal{D}_{2}(s_{2} , k) = \mathcal{D}_{2}(r_{2} , k). \label{eq:clusters0}
\end{align}
In addition,
\begin{gather}
\Big ( \D_{1}(s_{1} , \ell_{new}) = \D_{1}(r_{1} , \ell_{new}) \Big) \nonumber \\ 
\neq \Big ( \D_{2}(s_{2} , \ell_{new}) = \D_{2}(r_{2} , \ell_{new}) \Big ). \label{eq:clusters}
\end{gather}
We call such a quadruple of rows $(s_{1} , r_{1} , s_{2} , r_{2})$ regarding $(\D_{1} , \D_{2} )$, which is obtained from an (i,j)-match quadruple of $(\delta(\X_{1}) , \delta(\X_{2}))$, a \textit{new-(i,j)-match quadruple}. We claim that $\D_{1} , \D_{2}$ does not share any common row. 

Suppose that rows $s_{1}$ from $\D_{1}$, and $s^{\prime}_{2}$ from $\D_{2}$ are equal. From the proof of Lemma \ref{lem:partitioning}, rows in $\delta(\X_{1}) , \delta(\X_{2})$ can be partitioned into (i,j)-match quadruples. Hence, we deduce that rows in $\D_{1} , \D_{2}$ can be partitioned into new-(i,j)-match quadruples. Let $\mathcal{Q}_{1} = (s_{1} , s_{2} , r_{1} , r_{2})$ and $\mathcal{Q}_{2} = (s^{\prime}_1 , r^{\prime}_{1} , s^{\prime}_{2} , r^{\prime}_{2})$ be two new-(i,j)-match quadruples in $\D_{1} , \D_{2}$, which contain rows $s_{1}$ and $s^{\prime}_{2}$ respectively. Due to equation \eqref{eq:clusters}, $\mathcal{Q}_{1}$ and $\mathcal{Q}_{2}$ must be distinct. On the other hand, by equations \eqref{eq:clusters0} and \eqref{eq:clusters}, we perceive 
\begin{gather*}
\D_{1}(s_{1}) = \D_{1}(r_{1}) = \D_{2}(s^{\prime}_{2}) = \D_{2}(r^{\prime}_{2}), \\
\D_{1}(s_{2}) = \D_{1}(r_{2}) = \D_{2}(s^{\prime}_{1}) = \D_{2}(r^{\prime}_{1}),
\end{gather*}
where $\D_{u}(n) \ (u \in \{ 1,2 \})$ is the $n$th row of $\D_{u}$. It can be readily checked that for the corresponding (i,j)-match quadruples $(s_{1} , s_{2} , r_{1} , r_{2})$ and $(s^{\prime}_1 , r^{\prime}_{1} , s^{\prime}_{2} , r^{\prime}_{2})$ in $(\delta(\X_{1}) , \delta(\X_{2}))$,
\begin{gather*}
\delta(\X_{1})(s_{1}) = \delta(\X_{2})(s^{\prime}_{2}) \ , \  \delta(\X_{1})(r_{1}) = \delta(\X_{2})(r^{\prime}_{2}), \\
\delta(\X_{1})(s^{\prime}_{1}) = \delta(\X_{2})(s_{2}) \ , \ \delta(\X_{1})(r^{\prime}_{1}) = \delta(\X_{2})(r_{2}).
\end{gather*}
This means that $\delta(\X_{1})$ and $\delta(\X_{2})$ must share common rows as well, which is a contradiction. Hence, matrices $\D_{1} , \D_{2}$ don't have any common rows. Therefore, we conclude for the pair of matrices $(\phi^\Ll_{i,j}(\X_{1}) , \phi^\Ll_{i,j}(\X_{2}))$, 
$$\delta(\phi^\Ll_{i,j}(\X_{1})) = \D_{1} \ , \ \delta(\phi^\Ll_{i,j}(\X_{2})) = \D_{2}.$$ 
This yields $\phi^\Ll_{i,j}(\X_{1}) \neq \phi^\Ll_{i,j}(\X_{2})$. On the other hand, according to the definition of $\phi$, we have that for every $1 \leq \ell \leq \Ll-1$, removing $\ell$th columns from $\phi^\Ll_{i,j}(\X_{1})$ and $\phi^\Ll_{i,j}(\X_{2})$ results in equal matrices. Thus, we conclude that $(\phi^\Ll_{i,j}(\X_{1}) , \phi^\Ll_{i,j}(\X_{2}))$ is a critical pair as well. Furthermore, the set of common indices $\mathcal{S}_{1}$ and $\mathcal{S}_{2}$, designated in Definition \ref{def:delta}, does not change by applying merging reductions.
  
By definition of regularity, we know that rows in each of $\delta(\X_{1})$ and $\delta(\X_{2})$ can be partitioned into clusters of size $2^t$, with rows in each cluster are completely equal to each other. Hence, the existence of a partitioning of rows into (i,j)-match quadruples extends to a partitioning of clusters into match quadruples with the given properties. Equation \eqref{eq:clusters} reveals that by applying $\phi$, a given match quadruple consisting of clusters $C_{1} , C_{2}$ in $\delta(\X_{1})$ and $C_{3} , C_{4}$ in $\delta(\X_{2})$, turns into clusters $C^{\prime}_{12} , C^{\prime}_{34}$ in $\delta(\phi^\Ll_{i,j}(\X_{1}))$ and $\delta(\phi^\Ll_{i,j}(\X_{2}))$ respectively with size $2^{t+1}$, such that all of the rows in each of $C^{\prime}_{12}$ and $C^{\prime}_{34}$ are equal to one another. Hence, we conclude that $(\phi^\Ll_{i,j}(\X_{1}) , \phi^\Ll_{i,j}(\X_{2}))$ has at least $t+1$ degrees of regularity. 
\end{proof}

\begin{lem} \label{lem:symmetry}
For given distributions $P_{1} , P_{2}$ on binary sequences with length $L$, suppose that there exist a partitioning $\mathcal{V}$ of all sequences with length L into pairs, such that for each pair $( s_{1} , s_{2} ) \in \mathcal{V}$,
\begin{gather}
P_{1}(s_{1}) = P_{2}(s_{2}), \label{first} \\ 
P_{1}(s_{2}) = P_{2}(s_{1}). \label{second}
\end{gather}
Then
$$\mathcal{C}(P_{1} , P_{2}) = -\log \Big ( \sum_{\ell \in \{ 0,1 \}^\Ll} \sqrt{P_{1}(\ell)P_{2}(\ell)} \Big).$$
\end{lem}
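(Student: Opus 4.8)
The plan is to show that the function $\lambda \mapsto \sum_{\ell \in \{0,1\}^{L}} P_{1}^{\lambda}(\ell) P_{2}^{1-\lambda}(\ell)$ attains its minimum over $[0,1]$ at $\lambda = \tfrac12$, so that the definition of Chernoff information collapses to the Bhattacharyya-type expression on the right-hand side. The whole argument is an exercise in exploiting the pairing $\mathcal{V}$ together with the AM--GM inequality.

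First I would reorganize the sum using $\mathcal{V}$. For a pair $(s_{1},s_{2}) \in \mathcal{V}$ put $a = P_{1}(s_{1}) = P_{2}(s_{2})$ and $b = P_{1}(s_{2}) = P_{2}(s_{1})$, so that $P_{2}(s_{1}) = b$ and $P_{2}(s_{2}) = a$. The combined contribution of $s_{1},s_{2}$ to $\sum_{\ell} P_{1}^{\lambda}(\ell) P_{2}^{1-\lambda}(\ell)$ is then exactly $a^{\lambda} b^{1-\lambda} + a^{1-\lambda} b^{\lambda}$, and since $\mathcal{V}$ partitions all length-$L$ sequences, summing over the pairs gives $\sum_{\ell} P_{1}^{\lambda}(\ell) P_{2}^{1-\lambda}(\ell) = \sum_{(s_{1},s_{2}) \in \mathcal{V}} \bigl( a^{\lambda} b^{1-\lambda} + a^{1-\lambda} b^{\lambda} \bigr)$.

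Next I would bound each pair's contribution from below. Writing $x = a^{\lambda} b^{1-\lambda}$ and $y = a^{1-\lambda} b^{\lambda}$, one has $xy = ab$, so AM--GM gives $x + y \geq 2\sqrt{xy} = 2\sqrt{ab}$, with equality exactly when $x = y$, which occurs at $\lambda = \tfrac12$ (where $x = y = \sqrt{ab}$). Summing over all pairs shows that for every $\lambda \in [0,1]$, $\sum_{\ell} P_{1}^{\lambda}(\ell) P_{2}^{1-\lambda}(\ell) \geq \sum_{(s_{1},s_{2}) \in \mathcal{V}} 2\sqrt{ab}$. Finally, using the pairing once more, $\sqrt{P_{1}(s_{1})P_{2}(s_{1})} = \sqrt{P_{1}(s_{2})P_{2}(s_{2})} = \sqrt{ab}$, so the right-hand side equals $\sum_{\ell} \sqrt{P_{1}(\ell)P_{2}(\ell)}$, which is precisely the value of $\sum_{\ell} P_{1}^{\lambda}(\ell) P_{2}^{1-\lambda}(\ell)$ at $\lambda = \tfrac12$. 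Hence the minimum over $\lambda$ is attained at $\tfrac12$, and applying $-\log$ yields the claimed identity.

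There is no real obstacle; the only points needing a word of care are (i) sequences $\ell$ with $P_{1}(\ell) = 0$ or $P_{2}(\ell) = 0$, handled by the usual convention $0^{c} = 0$ for $c > 0$, under which such a pair contributes $0$ for all $\lambda \in (0,1)$ and at $\lambda = \tfrac12$ alike, so the estimate above is unaffected; and (ii) confirming that the endpoints $\lambda \in \{0,1\}$ do not beat $\lambda = \tfrac12$, which is immediate since there the sum equals $\sum_{\ell} P_{2}(\ell) = 1$, respectively $\sum_{\ell} P_{1}(\ell) = 1$, and $\sum_{\ell} \sqrt{P_{1}(\ell)P_{2}(\ell)} \leq 1$ by Cauchy--Schwarz. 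Thus the interior point $\lambda = \tfrac12$ realizes the global minimum.
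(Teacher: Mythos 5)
Your proof is correct, and it follows the same overall decomposition as the paper: group the terms of $f_\lambda(P_1,P_2)=\sum_\ell P_1^{\lambda}(\ell)P_2^{1-\lambda}(\ell)$ according to the pairing $\mathcal{V}$, show each pair's contribution is minimized at $\lambda=\tfrac12$, and sum. Where you diverge is in how the per-pair inequality is established. The paper differentiates $P_1(s_1)^{\lambda}P_2(s_1)^{1-\lambda}+P_1(s_2)^{\lambda}P_2(s_2)^{1-\lambda}$ in $\lambda$, uses the symmetry hypotheses to identify the stationary point $\lambda^*=\tfrac12$, and concludes from there; strictly speaking that argument still owes a check that the stationary point is a minimum (e.g.\ via convexity of $\lambda\mapsto a^{\lambda}b^{1-\lambda}$), which the paper leaves implicit. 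Your route notices that with $x=a^{\lambda}b^{1-\lambda}$ and $y=a^{1-\lambda}b^{\lambda}$ the product $xy=ab$ is independent of $\lambda$, so AM--GM gives $x+y\ge 2\sqrt{ab}$ uniformly in $\lambda$, with equality at $\lambda=\tfrac12$. This is more elementary (no calculus), closes the second-order gap automatically, and your explicit handling of zero-probability atoms and of the endpoints $\lambda\in\{0,1\}$ is a small but genuine tightening over the paper's presentation. Both arguments buy the same conclusion; yours is the cleaner one for this particular lemma, while the paper's derivative computation mirrors the technique it reuses in Lemma~\ref{lem:bernoullikomaki}, which is presumably why it was phrased that way.
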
 
\begin{proof}
For a desired pair $(s_{1} , s_{2})$, we prove
\begin{align}
P_{1}(s_{1})^{\lambda}P_{2}(s_{1})^{1-\lambda} & + P_{1}(s_{2})^{\lambda}P_{2}(s_{2})^{1-\lambda} \nonumber \\
& \geq \sqrt{P_{1}(s_{1})P_{2}(s_{1})} + \sqrt{P_{1}(s_{2})P_{2}(s_{2})}. \label{ineq:atomi}
\end{align}
To this end, we obtain the derivative's root of LHS in \eqref{ineq:atomi}, with respect to $\lambda$.
\begin{align}
\log(\frac{P_{1}(s_{1})}{P_{2}(s_{1})}) & P_{1}(s_{1})^{\lambda^*}P_{2}(s_{1})^{1-\lambda^*}  + \nonumber \\ 
&\log(\frac{P_{1}(s_{2})}{P_{2}(s_{2})}) P_{1}(s_{2})^{\lambda^*}P_{2}(s_{2})^{1-\lambda^*} = 0. \label{derivative}
\end{align}
Equation \eqref{derivative} combined with \eqref{first} , \eqref{second} reveals
$\lambda^* = \frac{1}{2}$, which expresses inequality \eqref{ineq:atomi} as desired.

Summing inequality \eqref{ineq:atomi} for all pairs in $\mathcal{V}$, and taking minimum with respect to $\lambda$, we arrive at
\begin{align}
\min_{0 \leq \lambda \leq 1} \Big( \sum_{(s_{1} , s_{2}) \in \mathcal{V}} P_{1}(s_{1})^{\lambda}P_{2}(s_{1})^{1-\lambda} + P_{1}(s_{2})^{\lambda}P_{2}(s_{2})^{1-\lambda} \Big) \nonumber \\
\geq \sum_{(s_{1} , s_{2}) \in \mathcal{V}} \sqrt{P_{1}(s_{1})P_{2}(s_{1})} + \sqrt{P_{1}(s_{2})P_{2}(s_{2})}.\nonumber
\end{align}
Note that in the above inequality, the minimum of LHS is taken over all values of $0 \leq \lambda \leq 1$, which includes $\lambda = \frac{1}{2}$. Thus, we also have LHS $ \leq $ RHS, which yields LHS$=$ RHS. Hence, according to \eqref{eq:chernoffdefinition}, proof of Lemma \ref{lem:symmetry} is complete.
\end{proof}

\begin{lem} \label{lem:bernoullikomaki}
For a given positive real $\epsilon$, consider the family $F_{\epsilon}$ consisting of all pairs of Bernoulli distributions with parameters $p$ and $q$, such that $q-p = \epsilon$. Then
$$\min_{((p,1-p),(q,1-q)) \in F_{\epsilon}} \mathcal{C}_{br}(p,q) = \mathcal{C}_{br}(\frac{1-\epsilon}{2},\frac{1+\epsilon}{2}).$$
\end{lem}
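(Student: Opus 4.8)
The plan is to reduce this to a clean one-variable optimization. Fix $\epsilon > 0$ and parametrize the family $F_\epsilon$ by $p$, so that $q = p + \epsilon$ with $p \in [0, 1-\epsilon]$. Define
\begin{align}
h(p) = \min_{0 \le \lambda \le 1}\Big( p^{\lambda}(p+\epsilon)^{1-\lambda} + (1-p)^{\lambda}(1-p-\epsilon)^{1-\lambda}\Big), \nonumber
\end{align}
so that $\mathcal{C}_{br}(p, p+\epsilon) = -\log h(p)$ and we want to show $h(p)$ is \emph{maximized} at $p = \frac{1-\epsilon}{2}$ (the symmetric pair), since $-\log$ is decreasing. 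First I would observe that the optimizing $\lambda$ in the definition of $h$ is the Chernoff point; but rather than tracking it, I would use the envelope/minimax structure: for any fixed $\lambda$, the function $p \mapsto p^{\lambda}(p+\epsilon)^{1-\lambda} + (1-p)^{\lambda}(1-p-\epsilon)^{1-\lambda}$ is what we must control, and $h$ is the lower envelope over $\lambda$.

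The key step is to exploit the symmetry $p \leftrightarrow 1 - p - \epsilon$: this substitution swaps the two summands and also swaps $\lambda \leftrightarrow 1-\lambda$ inside the min, so $h(p) = h(1-p-\epsilon)$. Thus $h$ is symmetric about the midpoint $p_0 = \frac{1-\epsilon}{2}$. It then suffices to show $h$ is, say, nondecreasing on $[0, p_0]$ (equivalently that $p_0$ is the unique interior maximizer). I would do this by showing that at $p = p_0$ the symmetric choice $\lambda = \tfrac12$ is the Chernoff-optimal $\lambda$ — this is exactly the computation already carried out in the proof of Lemma \ref{lem:symmetry}, where setting the $\lambda$-derivative to zero and using the symmetry relations forces $\lambda^* = \tfrac12$ — and then comparing $h(p)$ against the value at $p_0$ by evaluating the $\lambda$-unconstrained expression at the suboptimal choice $\lambda = \tfrac12$:
\begin{align}
h(p) \le \sqrt{p(p+\epsilon)} + \sqrt{(1-p)(1-p-\epsilon)}. \nonumber
\end{align}
So it is enough to prove that the right-hand side, call it $g(p)$, is maximized at $p_0$. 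This is now an elementary single-variable calculus fact: $g'(p) = \frac{2p+\epsilon}{2\sqrt{p(p+\epsilon)}} - \frac{2(1-p)-\epsilon}{2\sqrt{(1-p)(1-p-\epsilon)}}$, and $g'(p_0) = 0$ by symmetry, with $g$ concave on $(0, 1-\epsilon)$ (each term is concave, being of the form $\sqrt{\text{concave quadratic}}$), so $p_0$ is the global max of $g$. Combined with $h(p_0) = g(p_0)$ (the $\lambda=\tfrac12$ bound is tight at the symmetric point) and $h(p) \le g(p) \le g(p_0) = h(p_0)$ for all $p$, we conclude $h(p) \le h(p_0)$, hence $\mathcal{C}_{br}(p, p+\epsilon) \ge \mathcal{C}_{br}(p_0, p_0+\epsilon)$, which is the claim.

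The main obstacle is the interchange between the minimum over $\lambda$ (defining $\mathcal{C}_{br}$) and the minimum over the family: one cannot simply say "plug in $\lambda = \tfrac12$ everywhere." The careful point is that $h(p) \le g(p)$ holds for \emph{every} $p$ (since $\tfrac12$ is merely feasible, not optimal), while $h(p_0) = g(p_0)$ holds because $\tfrac12$ happens to be \emph{optimal} at the symmetric pair — and establishing that optimality is precisely where the computation from Lemma \ref{lem:symmetry} is reused. Once that asymmetry is handled correctly, the remaining concavity argument for $g$ is routine. An alternative, slightly slicker route would be to invoke Lemma \ref{lem:symmetry} directly with $L=1$ and the trivial pairing of $\{0,1\}$, giving $\mathcal{C}_{br}(p_0, p_0+\epsilon) = -\log g(p_0)$ immediately, and then only the inequality $h(p) \le g(p) \le g(p_0)$ remains; I would likely present it this way to minimize redundant calculation.
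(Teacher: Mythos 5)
Your proposal is correct, and it takes a genuinely different --- and considerably shorter --- route than the paper. The paper proves the lemma by a monotonicity argument: it shows $\mathcal{C}_{br}(p,p+\epsilon)$ is decreasing in $p$ on $[0,\frac{1-\epsilon}{2}]$, which requires splitting that interval into two pieces, applying Jensen's inequality to $\log(\frac{x}{1-x})$ (concave on $[0,\frac12]$, convex on $[\frac12,1]$), and separately proving that the optimal $\lambda^*$ stays in $[0,\frac12]$ on the second piece so that the restricted minimum over $\lambda$ agrees with the true one. Your argument sidesteps all of the $\lambda$-tracking by a sandwich: $h(p)=\min_\lambda f_\lambda(p)\le f_{1/2}(p)=g(p)$ for every $p$ because $\lambda=\tfrac12$ is feasible, $g$ is symmetric about $p_0=\frac{1-\epsilon}{2}$ and concave so $g(p)\le g(p_0)$, and $h(p_0)=g(p_0)$ by Lemma \ref{lem:symmetry} applied with $\Ll=1$ (the symmetric pair is exactly where the Bhattacharyya bound on the Chernoff coefficient is tight), whence $h(p)\le h(p_0)$ and $\mathcal{C}_{br}(p,p+\epsilon)\ge\mathcal{C}_{br}(p_0,p_0+\epsilon)$. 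The only buys of the paper's longer route are the intermediate monotonicity statements \eqref{eq:monotone1}--\eqref{eq:monotone2}, which your proof does not deliver but which the final results never use. One small repair: your stated reason for concavity of $g$ is off, since $p(p+\epsilon)$ and $(1-p)(1-p-\epsilon)$ are \emph{convex} quadratics, not concave ones. The conclusion still holds: either compute directly that $\frac{d^2}{dp^2}\sqrt{p(p+\epsilon)}=\frac{-\epsilon^2}{4\bigl(p(p+\epsilon)\bigr)^{3/2}}<0$ (a quadratic with real roots has concave square root on the region where it is nonnegative), or note that each term is the geometric mean $\sqrt{uv}$ of two nonnegative affine functions of $p$, and the geometric mean is jointly concave. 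With that justification corrected, the proof is complete.
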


\begin{proof}
Define 
$$f_{\lambda}(p,q) = p^{\lambda}q^{\lambda} + (1-p)^{1 - \lambda}(1-q)^{1 - \lambda}.$$
Without loss of generality, we assume $p \leq \frac{1}{2}$. Given $q - p = \epsilon$, we should have either $p$ or $1-q$ less that $\frac{1-\epsilon}{2}$. if the second inequality was the case, then we define the new variables $\tilde{p} = 1 -q$ , $\tilde{q} = 1 - p$. this way, we have $\tilde{p} \leq \frac{1-\epsilon}{2}$. Hence without loss of generality, We can also assume $0 \leq p \leq \frac{1 - \epsilon}{2}$. We calculate the condition under which the derivative of $f_{\lambda}(p,q)$ with respect to $\lambda$ is positive:
\begin{align}
& \frac{\text{d}(p^{\lambda}(p+\epsilon)^{1-\lambda} + (1-p)^{\lambda}(1-p-\epsilon)^{1-\lambda})}{\text{d} p} \geq 0 .  \nonumber
\end{align}
This is equevalent to
\begin{align}
\log(\frac{p}{1-p})(1-\lambda) + & \log(\frac{p+\epsilon}{1-(p+\epsilon)})\lambda \nonumber \\
& \leq \log(\frac{(1-\lambda)p + \lambda (p + \epsilon)}{1 - ((1-\lambda)p + \lambda (p + \epsilon) )}). \label{eq:jensen}
\end{align}

But note that the function $g(x) = \log(\frac{x}{1-x})$ is concave for $x \leq \frac{1}{2}$, and convex for $x \geq \frac{1}{2}$, because
\begin{align}
\frac{d^2 g}{d x^2} = \frac{-(1-2x)}{x^2(1-x)^2} = \begin{cases}< 0 &  x < \frac{1}{2}\\ \geq 0 & x \geq \frac{1}{2} \end{cases}. \nonumber
\end{align}
Hence if $p + \epsilon \leq \frac{1}{2}$, by Jensen's inequality, we obtain \eqref{eq:jensen} for every $1 \leq \lambda \leq 1$. Hence, for $0 \leq p_{1} \leq p_{2} \leq \frac{1}{2} - \epsilon$,
\begin{equation}
-\min_{0 \leq \lambda \leq 1} \log(f_{\lambda}(p_{1},q))  \leq  -\min_{0 \leq \lambda \leq 1} \log(f_{\lambda}(p_{2},q)). \label{eq:monotone1}
\end{equation}
In the following, we are going to prove that inequality \eqref{eq:jensen} also holds for $\frac{1}{2} - \epsilon \leq p \leq \frac{1-\epsilon}{2}$, when $\lambda \leq \frac{1}{2}$. In Figure.\ref{fig:jensenpic}, we can see the curve with respect to $g(x) = \log(\frac{x}{1-x})$. Let $\ell(x,y)$ be the line passing through desired points $x$ and $y$. The red line corresponds to $\ell((p , g(p)) , (p+\epsilon , g(p+\epsilon))$. Note that the inequality $(1-\lambda)p + \lambda (p + \epsilon) \leq \frac{1}{2}$ is true for $\lambda \leq \frac{1}{2}$. Thus, in order to prove inequality \eqref{eq:jensen} for $0 \leq \lambda \leq \frac{1}{2}$, it is sufficient to illustrate that $\ell((p , g(p)) , (p+\epsilon , g(p+\epsilon))$ is under the graph of $g$, in the interval $[p , \frac{1}{2}]$. 

For $0 \leq x \leq 1$, we have 
\begin{align}
\frac{d(g)}{dx} = \frac{1}{x(1-x)} \geq 0. \label{eq:gfunction}
\end{align}
Also, note that $p \leq \frac{1 - \epsilon}{2}$ yields $p + \epsilon \leq 1 - p$. This inequality, among with Inequality \eqref{eq:gfunction}, reveals that the slope of $\ell((p , g(p)) , (p+\epsilon , g(p+\epsilon))$ is not less than the slope of $\ell((p , g(p)) , (1-p , g(1-p))$, which can be seen as the blue line in Figure.\ref{fig:jensenpic}. Therefore, it is sufficient to prove that $\ell((p , g(p)) , (1-p , g(1-p))$ is under the graph of $g$. But note that $\ell((p , g(p)) , (1-p , g(1-p))$ passes through the point $(\frac{1}{2} , 0)$. Thus, Concavity of $g$ in $[0 , \frac{1}{2}]$ implies that $\ell((p , g(p)) , (1-p , g(1-p))$ is under the graph of $g$ in $[p,\frac{1}{2}]$. Hence, for $\frac{1 - \epsilon}{2} \leq p_{1} \leq p_{2} \leq \frac{1}{2} - \epsilon$, we get that
\begin{align}
-\min_{0 \leq \lambda \leq \frac{1}{2}} \log(f_{\lambda}(p_{1},q))  \leq  -\min_{0 \leq \lambda \leq \frac{1}{2}} \log(f_{\lambda}(p_{2},q)). \label{eq:monotone2}
\end{align}
Next we prove that for $\frac{1}{2} - \epsilon \leq p \leq \frac{1 - \epsilon}{2}$, $f_{\lambda}(p,q)$ takes its minimum with respect to $\lambda \in [0,1]$, in a point $\lambda^{*} \leq \frac{1}{2}$. To this end, we calculate the derivative's root of $f_{\lambda}(p,q)$, with respect to $\lambda$.

\begin{align}
& \frac{\text{d}(p^{\lambda}(p+\epsilon)^{1-\lambda} + (1-p)^{\lambda}(1-p-\epsilon)^{1-\lambda})}{\text{d}\lambda}(\lambda^*) = 0 \nonumber
\end{align}
This is equivalent to
\begin{align}
\log(\frac{p}{p+\epsilon}) & p^{\lambda^{*}}(p+\epsilon)^{1-\lambda^{*}}  = \nonumber \\
& \log(\frac{1-(p+\epsilon)}{1-p}) (1-p)^{\lambda^{*}}(1-(p+\epsilon))^{1-\lambda^{*}}. \label{eq:jensen1}
\end{align}

 \begin{figure}[t]
   \centering
   \includegraphics[width=0.52\textwidth]{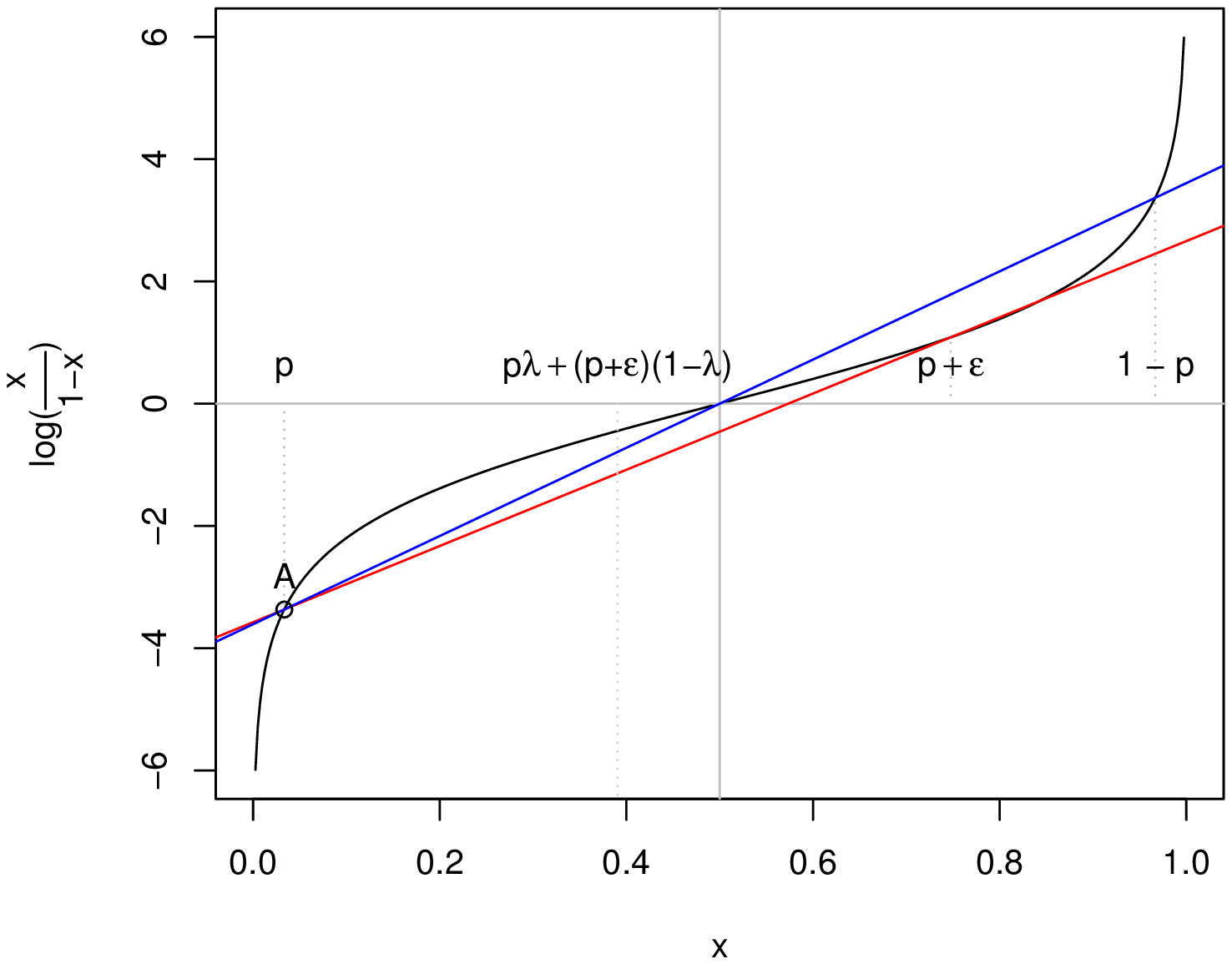}
   \caption{Phase transition: the two upper bounds cross at 0.25.}
   \label{fig:jensenpic}
 \end{figure}

For $p \in (\frac{1}{2} - \epsilon , \frac{1 - \epsilon}{2})$, we have the trivial inequalities
\begin{gather}
 \log(\frac{p}{p+\epsilon}) \leq \log(\frac{1-(p+\epsilon)}{1-p}), \label{eq:jensen2} \\
 \frac{p+\epsilon}{1-(p+\epsilon)} \leq \frac{1-p}{p}. \label{eq:jensen4}
\end{gather}
Equation \eqref{eq:jensen1}, among with Inequality \eqref{eq:jensen2} imply
\begin{align}
(\frac{p}{1-p})^{\lambda^*}(\frac{p+\epsilon}{1-(p+\epsilon)})^{1-\lambda^*} \geq 1. \label{eq:jensen3}
\end{align}
Inequality \eqref{eq:jensen3} combined with \eqref{eq:jensen4} reveals
\begin{align}
(\frac{p}{1-p})^{2\lambda^* - 1} \geq 1. \nonumber
\end{align}
Note that $\frac{p}{1-p} \leq 1$. Thus, we conclude $\lambda^* \leq \frac{1}{2}$. Hence, for every $\frac{1}{2} - \epsilon \leq p \leq \frac{1-\epsilon}{2}$,
\begin{align}
-\min_{0 \leq \lambda \leq \frac{1}{2}}  \log(f_{\lambda}(p,q)) = -\min_{0 \leq \lambda \leq 1}  \log(f_{\lambda}(p,q))  \label{eq:monotone3}
\end{align}
Combining inequalities \eqref{eq:monotone1} and \eqref{eq:monotone2}, with equality \eqref{eq:monotone3} reveals that $\mathcal{C}_{br}(p,q)$ is a decreasing function with respect to $p$ in the interval $[0,\frac{1-\epsilon}{2}]$. Hence, it takes its minimum value at $p = \frac{1 - \epsilon}{2}$, which completes the proof of Lemma \ref{lem:bernoullikomaki}.
\end{proof}

\begin{proof}[\textbf{Proof of Lemma \ref{lem:bernoulli}}]
Let $q - p = \delta \geq \epsilon$. Define
\begin{align}
f_{\lambda}(p,q) &=  p^{\lambda}q^{\lambda}  + p^{1-\lambda}q^{1-\lambda} \nonumber \\ 
& = p^{\lambda}(p+\delta)^{\lambda} + (1-p)^{1-\lambda}(1-(p+\delta))^{1-\lambda} = h(\delta).  \nonumber
\end{align}
We prove that the derivative of $h(\delta)$ with respect to $\delta$ is non-positive.
\begin{align*}
& \frac{\text{d}(h)}{\text{d}\delta} = \\
&  p^{\lambda}(1-\lambda)(p+\delta)^{-\lambda}  -  (1-p)^{\lambda}(1-(p+\delta))^{-\lambda}(1-\lambda) = \\
& (1-\lambda)(1-p)^{\lambda}(p+\delta)^{-\lambda} \! \left[ (\frac{p}{1-p})^{\lambda} - (\frac{p+\delta}{1-(p+\delta)})^{\lambda} \right] \stackrel{ \text{(c)}}{\leq} 0,
\end{align*}
where $(c)$ follows from the fact that $\frac{p}{1-p} \leq \frac{p+\delta}{1-(p+\delta)}$.
Hence, $h$ is a decreasing function of $\delta$. Thus, if we define $\tilde{q} = p + \epsilon$, we have
\begin{equation}
\mathcal{C}_{br}(p,q) = \min_{\lambda}f_{\lambda}(p,q) \geq \min_{\lambda} f_{\lambda}(p,\tilde{q}) = \mathcal{C}_{br}(p,\tilde{q}). \label{firststep}
\end{equation}
On the other hand, according to Lemma \ref{lem:bernoullikomaki}, 
\begin{equation}
\mathcal{C}_{br}(p,\tilde{q}) \geq \mathcal{C}_{br}(\frac{1-\epsilon}{2} , \frac{1+\epsilon}{2}). \label{secondstep}
\end{equation}
Furthermore, Bernoullies $(\frac{1-\epsilon}{2},\frac{1 + \epsilon}{2})$ and $(\frac{1+\epsilon}{2},\frac{1 - \epsilon}{2})$ satisfy the symmetry property required for applying Lemma \ref{lem:symmetry}. Hence, 
\begin{equation}
\mathcal{C}_{br}(\frac{1 - \epsilon}{2},\frac{1 + \epsilon}{2}) = -\log(\sqrt{1 - \epsilon^2}). \label{thirdstep}
\end{equation}
Inequalities \eqref{firststep} and \eqref{secondstep}, among with equality \eqref{thirdstep} complete the proof of Lemma \ref{lem:bernoulli}.
\end{proof}

\begin{proof}[\textbf{Proof of Lemma \ref{lem:example1}}]
Let $\ell_{d}$ be the column index which corresponds to the only unequal entry of vectors $v_{1}$ and $v_{2}$ ($\mathcal{H}(v_{1} , v_{2}) = 1$). According to Lemma \ref{lem:chernoffloss}, we can apply reduction to $\X^*_{1} , \X^*_{2}$ by removing all the columns either than $\ell_{d}$. Furthermore, by definition of $\X^*_{1} , \X^*_{2}$, for each column $\ell \neq \ell_{d}$, removing the $\ell$th columns from both of matrices does not incur any loss in $\textsf{CI}$. Hence we have 
\begin{equation}
\mathcal{C}(P_{\X^*_{1}} , P_{\X^*_{2}}) = \mathcal{C}(P_{\X^{\mathrm{br}}_{1}} , P_{\X^{\mathrm{br}}_{2}}), \label{first1}
\end{equation} 
where $X^{\mathrm{br}}_{1}$ and $X^{\mathrm{br}}_{2}$ correspond to column $\ell_{d}$ of $\X^*_{1}$ and $\X^*_{2}$. Moreover, $P_{\X^{\mathrm{br}}_{1}}$ and $P_{\X^{\mathrm{br}}_{2}}$ correspond to Bernoulli distributions with parameters $\frac{1 - \eta_{\N}}{2}$ and $\frac{1 + \eta_{\N}}{2}$ ($\eta_{\mathrm{N}} = \frac{1-2\mathrm{f}}{\mathrm{N}}$). Hence, 
\begin{equation}
\mathcal{C}(P_{\X^{\mathrm{br}}_{1}} , P_{\X^{\mathrm{br}}_{2}}) = \mathcal{C}_{\mathrm{br}}(\frac{1 - \eta_{\N}}{2} , \frac{1 + \eta_{\N}}{2}). \label{second1}
\end{equation}
On the other hand, Bernoullies $(\frac{1 - \eta_{\N}}{2} , \frac{1 + \eta_{\N}}{2})$ and $(\frac{1 + \eta_{\N}}{2} , \frac{1 - \eta_{\N}}{2})$ satisfy the symmetry condition of Lemma \ref{lem:symmetry}. Thus,
\begin{equation}
\mathcal{C}^{\mathrm{br}}(\frac{1 - \eta_{\N}}{2} , \frac{1 + \eta_{\N}}{2}) = -\log(\sqrt{1 - {\eta_{\N}}^2}). \label{third1}
\end{equation}
Combining equations \eqref{first1} , \eqref{second1} and \eqref{third1} completes the proof of Lemma \ref{lem:example1}.
\end{proof}
\begin{lem}[Almost closest pair for even $\N$]
Suppose $\mathrm{N} = 2n$. Consider two sequences $\upsilon_{1} , \upsilon_{2}$ with length $\mathrm{L}$, and Hamming distance one. In addition, suppose that $\upsilon_{0}$ has more ones that $\upsilon_{1}$. Define matrices $\mathcal{X}^*_{1}$ and $\mathcal{X}^*_{2}$, such that $\mathcal{X}^*_{1}$ has $n-1$ replicas of $\upsilon_{1}$ and $n+1$ replicas of $\upsilon_{2}$ as its rows, while $\mathcal{X}^*_{2}$ has $n$ replicas of $\upsilon_{1}$ and $n$ replicas of $\upsilon_{2}$. Then, for defined $\mathcal{X}^*_{1}$ and $\mathcal{X}^*_{2}$, 
$$\mathcal{C}(P_{\mathcal{X}^*_{1}} , P_{\mathcal{X}^*_{2}}) \leq -\log(\frac{\N-1}{\N}\sqrt{1 - {\eta_{\N - 1}}^2} + \frac{1}{\N}).$$ 
\end{lem}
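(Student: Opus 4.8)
The plan is to reduce to a one-dimensional Bernoulli problem by lossless column elimination (exactly as in the proof of Lemma~\ref{lem:example1}), and then to upper bound the resulting Bernoulli Chernoff information by peeling off one row that is common to both matrices and invoking Lemma~\ref{lem:symmetry} on what remains.

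\emph{Step 1 (reduction to a single column).} Let $\ell_d$ be the unique column in which $\upsilon_1$ and $\upsilon_2$ differ; assume without loss of generality that $\upsilon_1(\ell_d)=0$ and $\upsilon_2(\ell_d)=1$ (this is what the hypothesis on the number of ones fixes, and the whole argument is symmetric under $0\leftrightarrow1$). For every column $\ell\neq\ell_d$ both $\mathcal{X}^*_1$ and $\mathcal{X}^*_2$ have that column identically equal to the constant vector with all entries $\upsilon_1(\ell)=\upsilon_2(\ell)$, and deleting it leaves the two matrices unequal (the multiplicities $n-1$ and $n$ still differ), so the pair of $\ell$th columns is non-critical. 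Applying Lemma~\ref{lem:chernoffloss} to these $\mathrm{L}-1$ columns one at a time, and invoking its equality clause at each step (the deleted columns are identical and all-zero or all-one), we obtain $\mathcal{C}(P_{\mathcal{X}^*_1},P_{\mathcal{X}^*_2}) = \mathcal{C}(P_{\mathcal{X}^{\mathrm{br}}_1},P_{\mathcal{X}^{\mathrm{br}}_2})$, where $\mathcal{X}^{\mathrm{br}}_1$ is the $\mathrm{N}\times1$ matrix with $n-1$ zero rows and $n+1$ one rows, and $\mathcal{X}^{\mathrm{br}}_2$ has $n$ of each.

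\emph{Step 2 (peel off a common row).} I would split the $\mathrm{N}$ rows of each one-dimensional model into a block of $\mathrm{N}-1$ rows and one leftover row, writing for $b\in\{0,1\}$ and $u\in\{1,2\}$ the decomposition $P_{\mathcal{X}^{\mathrm{br}}_u}(b) = \tfrac{\mathrm{N}-1}{\mathrm{N}}\,Q_u(b) + \tfrac{1}{\mathrm{N}}\,R(b)$, where $Q_1$ is the $(\mathrm{N}-1)$-row Bernoulli mixture with $n-1$ zeros and $n$ ones, $Q_2$ is the one with $n$ zeros and $n-1$ ones, and $R(\cdot)=p(\cdot\mid 1)$ is the output of the single leftover ``one'' row, which is the same for $u=1$ and $u=2$; the point of this particular split is that the multiplicities match, so the leftover row is genuinely common to both matrices. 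Applying Lemma~\ref{lem:basic} to each of the two summands over $b$, and using $\sum_b R(b)=1$, gives for every $\lambda\in[0,1]$ that $f_\lambda(\mathcal{X}^{\mathrm{br}}_1,\mathcal{X}^{\mathrm{br}}_2) \ge \tfrac{\mathrm{N}-1}{\mathrm{N}}\sum_{b}Q_1(b)^{\lambda}Q_2(b)^{1-\lambda} + \tfrac{1}{\mathrm{N}}$, where $f_\lambda$ is the Chernoff sum of \eqref{eq:chernoffdefinition}.

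\emph{Step 3 (evaluate and conclude).} Since $\mathrm{N}-1=2(n-1)+1$ is odd, a direct computation of the two mixtures shows $Q_1(0)=Q_2(1)=\tfrac{1-\eta_{\mathrm{N}-1}}{2}$ and $Q_1(1)=Q_2(0)=\tfrac{1+\eta_{\mathrm{N}-1}}{2}$ with $\eta_{\mathrm{N}-1}=\tfrac{1-2\mathrm{f}}{\mathrm{N}-1}$, so $(Q_1,Q_2)$ satisfies the symmetry hypothesis of Lemma~\ref{lem:symmetry}, whence $\min_{\lambda\in[0,1]}\sum_{b}Q_1(b)^{\lambda}Q_2(b)^{1-\lambda} = \sum_{b}\sqrt{Q_1(b)Q_2(b)} = \sqrt{1-\eta_{\mathrm{N}-1}^2}$. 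As the $\tfrac1{\mathrm{N}}$ term in Step 2 does not depend on $\lambda$, minimizing the Step 2 inequality over $\lambda$ gives $\min_\lambda f_\lambda(\mathcal{X}^{\mathrm{br}}_1,\mathcal{X}^{\mathrm{br}}_2) \ge \tfrac{\mathrm{N}-1}{\mathrm{N}}\sqrt{1-\eta_{\mathrm{N}-1}^2} + \tfrac1{\mathrm{N}}$, and hence, by Step 1 and \eqref{eq:chernoffdefinition}, $\mathcal{C}(P_{\mathcal{X}^*_1},P_{\mathcal{X}^*_2}) = -\log\!\big(\min_\lambda f_\lambda(\mathcal{X}^{\mathrm{br}}_1,\mathcal{X}^{\mathrm{br}}_2)\big) \le -\log\!\big(\tfrac{\mathrm{N}-1}{\mathrm{N}}\sqrt{1-\eta_{\mathrm{N}-1}^2} + \tfrac1{\mathrm{N}}\big)$, as claimed. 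The genuine content is really the choice of split in Step 2 (removing one common row turns the remaining block into exactly the odd-$(\mathrm{N}-1)$ closest pair of Lemma~\ref{lem:example1}); the rest is bookkeeping — in particular checking the equality clause of Lemma~\ref{lem:chernoffloss} at each of the $\mathrm{L}-1$ elimination steps and verifying that the row multiplicities add up — and I do not anticipate a serious obstacle beyond that.
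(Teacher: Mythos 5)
Your proof is correct and follows essentially the same route as the paper: lossless column elimination down to the single differing column, then splitting off the contribution of one common ``one'' row via Lemma~\ref{lem:basic} and evaluating the remaining odd-$(\mathrm{N}-1)$ pair with Lemma~\ref{lem:symmetry}. The paper carries out your Step~2 purely arithmetically (writing $\tfrac{1}{2}-\tfrac{1-\mathrm{f}}{\mathrm{N}}+\tfrac{\mathrm{f}}{\mathrm{N}}$, etc.), whereas you make the ``remove one shared row'' interpretation explicit, but the decomposition and the inequalities invoked are identical.
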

\begin{proof}
Similar to the proof of Lemma \ref{lem:example1}, we can state that removing each column $\ell \neq \ell_{d}$ from matrices $\X^*_{1} , \X^*_{2}$ does not incur any loss in $\textsf{CI}$ ($\ell_{d}$ is defined as in proof of Lemma \ref{lem:example1}). Hence,
\begin{equation}
\mathcal{C}(P_{\X^*_{1}} , P_{\X^*_{2}}) = \mathcal{C}(P_{\X^{\mathrm{br}}_{1}} , P_{\X^{\mathrm{br}}_{2}}) =  \mathcal{C}_{\mathrm{br}}(\frac{1}{2} , \frac{1}{2} - \eta_{\N}). \nonumber
\end{equation} 
However,
\begin{align}
\mathcal{C}_{\mathrm{br}}(\frac{1}{2} ,  \frac{1}{2} - \eta_{\N}) = \nonumber \\ 
  -\log \Big(\min_{0 \leq \lambda \leq 1}  \Big[  ( \frac{1}{2}& - \frac{1-\f}{\N} + \frac{\f}{\N} ) ^\lambda ( \frac{1}{2} - \frac{\f}{\N} + \frac{\f}{\N} ) ^{1-\lambda} \nonumber + \\
  \!\!\!\!\  ( \frac{1}{2} - \frac{\f}{\N} & + \frac{1 - \f}{N} ) ^\lambda ( \frac{1}{2} - \frac{1-\f}{\N} + \frac{1-\f}{\N} ) ^{1-\lambda} \Big] \Big)  \stackrel{ \text{(d)}}{\leq} \nonumber  \\
   -\log \Big(\min_{0 \leq \lambda \leq 1}  \Big[  ( \frac{1}{2}& - \frac{1-\f}{\N} ) ^\lambda ( \frac{1}{2} - \frac{\f}{\N}) ^{1-\lambda} + \frac{\f}{\N} + \nonumber \\
   \!\!\!\!\  ( \frac{1}{2}  -  \frac{\f}{\N} & ) ^\lambda ( \frac{1}{2} - \frac{1-\f}{\N} ) ^{1-\lambda} + \frac{1-\f}{\N} \Big] \Big)= \nonumber  \\
  -\log \Big( \frac{\N - 1}{\N}  \min&_{ 0 \leq \lambda \leq 1}   \Big[  ( \frac{1 - \eta_{\N - 1}}{2} ) ^\lambda ( \frac{1 + \eta_{\N - 1}}{2} ) ^{1-\lambda} + \nonumber \\
  ( \frac{1 + \eta_{\N - 1}}{2} & ) ^\lambda ( \frac{1 - \eta_{\N - 1}}{2} ) ^{1-\lambda} \Big] + \frac{1}{\N} \Big) \stackrel{ \text{(e)}}{=}  \nonumber \\
  & \ \ \ \  \ \ \ \ \ -\log(\frac{\N-1}{\N}\sqrt{1 - {\eta_{\N - 1}}^2} + \frac{1}{\N}), \nonumber
\end{align}
where $(d)$ and $(e)$ follows from Lemmas \ref{lem:basic} and \ref{lem:symmetry} respectively. Hence,
\begin{equation}
\mathcal{C}^*(\N , \Ll) \leq -\log(\frac{\N-1}{\N}\sqrt{1 - {\eta_{\N - 1}}^2} + \frac{1}{\N}). \label{ineq:zoj}
\end{equation}
Inequality \eqref{ineq:zoj} together with lower bound $\tau_{1}$, reveal the bounds presented in the second part of Theorem \ref{thm:main}.
\end{proof}

\begin{proof}[\textbf{proof of Lemma \ref{lem:sufficientcondition}}]
First, we prove the initial part of the lemma. Assume that $(\X_{1} , \X_{2})$ is a critical pair of $\N \times \Ll$ matrices. We prove that $(\X_{1} , \X_{2})$ must have the expressed form, by induction on $\Ll$. According to Lemma \ref{lem:partitioning}, rows of $\X_{1}$ and $\X_{2}$ can be partitioned intro (i,j)-match quadruples. for desired $1 \leq i < j \leq \N$, we apply merging reduction by $\phi_{i,j}$ to obtain $(\X^{\prime}_{1} , \X^{\prime}_{2})$ with $\Ll - 1$ number of columns. Assume that for each $\Ll$, $\mathcal{U}^{(\Ll)}_{\mathrm{even}}$ and $\mathcal{U}^{(\Ll)}_{\mathrm{odd}}$ are sets consisting of sequences with length $\Ll$, having even number of ones and odd number of ones respectively. By hypothesis of induction, we know that there exist a number $n^*$, such that rows of $\delta(\X^{\prime}_{1})$ consist of $n^*$ replicas of sequences in $\mathcal{U}^{(\Ll - 1)}_{\mathrm{even}}$ (or $\mathcal{U}^{(\Ll-1)}_{\mathrm{odd}}$), while rows of $\delta(\X^{\prime}_{2})$ consist of $n^*$ replicas of sequences in $\mathcal{U}^{(\Ll - 1)}_{\mathrm{odd}}$ (or $\mathcal{U}^{(\Ll - 1)}_{\mathrm{even}}$). Because $(\X_{1} , \X_{2})$ is critical, Lemma \ref{lem:partitioning} states that there exist a disjoint partition of rows in $\delta(\X_{1}) , \delta(\X_{2})$ into (i,j)-match-quadruples, which corresponds to the new-(i,j)-match quadruples in the pair $(\delta(\X^{\prime}_{1}) , \delta(\X^{\prime}_{2}))$ (new-(i,j)-match-quadruples are defined and used in the proof of Lemma \ref{lem:chernoffloss2}). According to equation \ref{eq:clusters} with regards to the new-(i,j)-match quadruples, $n^*$ must be even. Now consider a new-(i,j)-match quadruple  $(s_{1} , r_{1} , s_{2} , r_{2})$ from rows of $\delta(\X^{\prime}_{1}) , \delta(\X^{\prime}_{2})$, which corresponds to the (i,j)-match quadruple with rows $s_{1} , r_{1}$ from $\delta(\X_{1})$ and $s_{2} , r_{2}$ from $\delta(\X_{2})$. In addition, assume that the new column produced by merging columns $i$ and $j$ is indexed as $\ell_{\mathrm{new}}$. 
For $u \in \{ 1,2 \}$, we define 
\begin{align}
\omega^u_{1} = \Big(\delta(\X_{u})(s_{u} , i) , \delta(\X_{u})(s_{u} , j) \Big), \\ 
\omega^u_{2} = \Big(\delta(\X_{u})(r_{u} , i) , \delta(\X_{u})(r_{u} , j) \Big).
\end{align}
For a desired matrix $\mathrm{X}$, we refer to its $n$th row as $\mathrm{X}(n)$. In addition, we refer to XOR function of the two entries in each of $\omega^u_{1}$ and $\omega^u_{2}$, by $\oplus(\omega^u_{1})$ and $\oplus(\omega^u_{2})$ respectively. From properties of match quadruples (as we mentioned in the proof of Theorem \eqref{lem:chernoffloss2}), and noting the definition of merging reduction, we have $\forall k \neq i , j$,
\begin{align}
 \delta(\X^{\prime}_{u})(s_{u} , k) = \delta(\X^{\prime}_{u})(r_{u} , k)
   = \delta(\X_{u})(s_{u} , k) = \delta(\X_{u})(r_{u} , k). \nonumber 
 \end{align}
 In addition,
 \begin{gather}
 \omega^u_{1} \neq \omega^u_{2}, \label{eq:unequality} \\
\delta(\X^{\prime}_{u})(s_{u} , \ell_{\mathrm{new}}) = \delta(\X^{\prime}_{u})(r_{u} , \ell_{\mathrm{new}}) = \oplus(\omega^u_{1}) = \oplus(\omega^u_{2}). \nonumber
\end{gather} 
Hence, the parity of sequences $\delta(\X^{\prime}_{u})(s_{u}) = \delta(\X^{\prime}_{u})(r_{u})$ are the same as $\delta(\X_{u})(s_{u})$ and $\delta(\X_{u})(r_{u})$. On the other hand, due to inequality \eqref{eq:unequality}, we obtain $\delta(\X_{u})(s_{u}) \neq \delta(\X_{u})(r_{u})$. 

We deduce that if the $\ell_{\mathrm{new}}$th entry in sequences $ \delta(\X^{\prime}_{u})(r_{u}) = \delta(\X^{\prime}_{u})(s_{u})$ is 0, then it corresponds to pairs $\omega^u_{1} , \omega^u_{2}$ in $\delta(\X_{u})(s_{u})$ and $\delta(\X_{v})(s_{v})$ respectively, where $\{\omega^u_{1} , \omega^u_{2} \} = \{ (0,0) , (1,1)\}$. In contrast, if the $\ell_{\mathrm{new}}$th entry in $\delta(\X^{\prime}_{u})(r_{u}) = \delta(\X^{\prime}_{u})(s_{u})$ is 1, then it corresponds to pairs $\omega^u_{1} , \omega^u_{2}$ in $\delta(\X_{u})(s_{u})$ and $\delta(\X_{u})(r_{u})$ respectively, where $\{\omega^u_{1} , \omega^u_{2} \} = \{ (0,1) , (1,0)\}$. Note that number of ones's parity does not change in both cases. Thus, we conclude that rows in $\delta(\X_{1})$ consist of $\frac{n^*}{2}$ replicas of sequences in $\mathcal{U}^{(\Ll)}_{\mathrm{even}}$, while rows in $\delta(\X_{2})$ consist of $\frac{n^*}{2}$ replicas of sequences in $\mathcal{U}^{(\Ll)}_{\mathrm{odd}}$, which completes the step of induction.

For the base of induction, where $\Ll = 1$, just note that the inequality condition on one dimensional vectors $\X_{1}$ and $\X_{2}$ means that $\delta(\X_{1})$ and $\delta(\X_{2})$ are not null. According to the definition of $\delta(\X_{1})$ and $\delta(\X_{2})$, one of them has all entries equal to one, while the other one has all entries equal to zero, which proves the base of induction.

Note that if $\X_{1} , \X_{2}$ have the expressed form, then removing each column clearly results in matrices to have equal multisets of rows. This means that $(\X_{1} , \X_{2})$ is cirical. Thus, we have proved the equivalency of the condition stated in the first part of Lemma \ref{lem:sufficientcondition}.

For the second part, assume that $(\X_{1} , \X_{2})$ satisfies the stated condition. For a given row $\mathbb{s}^{(1)}$ in $\X_{1}$, flip the $i$th entry to obtain sequence $\mathbb{s}^{(2)}$, flip the $j$th entry to obtain the sequence $\mathbb{s}^{(3)}$, and flip both of $i$th and $j$th entries to obtain sequence $\mathbb{s}^{(4)}$. By definition, Number of ones in $\mathbb{s}^{(4)}$ has the same parity as $\mathbb{s}^{(1)}$, while $\mathbb{s}^{(2)}$ and $\mathbb{s}^{(3)}$ have different parity in number of ones. Hence, $\mathbb{s}^{(4)}$ is a row of $\X_{1}$, while $\mathbb{s}^{(2)}$ and $\mathbb{s}^{(3)}$ are rows of $\X_{2}$. On the other hand, one can easily check that the defined quadruple $(\mathbb{s}^{(1)} ,  \mathbb{s}^{(4)} , \mathbb{s}^{(2)} , \mathbb{s}^{(3)})$ is actually an (i,j)-match-quadruple. Thus, according to the form of $\X_{1}$ and $\X_{2}$, we can partition their rows into (i,j)-match-quadruples. In addition, note that an (i,j)-match-quadruple satisfy the equality conditions $(1),(2)$ stated in Lemma \ref{lem:chernoffloss2}. Therefore, we conclude that applying merging reduction incur no loss in \textsf{CI} between the matrices. Furthermore, notice that by definition of merging reduction, the resulting matrices $\X^{\prime}_{1} , \X^{\prime}_{2}$ have the same form as $\X_{1} , \X_{2}$, such that rows in $\X^{\prime}_{1}$ consist of $2n_{1}$ replicas of each sequence in $\mathcal{U}^{(\Ll - 1)}_{even}$ and $2n_{2}$ replicas of each sequence in $\mathcal{U}^{(\Ll - 1)}_{odd}$, while  rows in $\X^{\prime}_{2}$ consist of $2n_{1}$ replicas of each sequence in $\mathcal{U}^{(\Ll - 1)}_{odd}$ and $2n_{2}$ replicas of each sequence in $\mathcal{U}^{(\Ll - 1)}_{even}$. Hence, we conclude that applying multiple merging reductions will not incur information loss in any reduction step, which completes the proof.  
\end{proof}

\begin{lem}[Near optimal pair in the noisy case] \label{lem:nearoptimal}
Define $\mathcal{L} , \mathrm{k} , \mathrm{R} , n , \epsilon_{\mathcal{L} , N}$ as in Theorem \ref{thm:main}. Then, there exist matrices $\X^*_{1} , \X^*_{2}$, such that 
\begin{align}
\mathcal{C}(P_{\X^*_{1}} , P_{\X^*_{2}}) \leq -\log(\sqrt{(\frac{\mathrm{N}-\mathrm{R}}{\mathrm{N}})^2 - \epsilon_{\mathcal{L},\mathrm{N}}^2} + \frac{\mathrm{R}}{\mathrm{N}}). \label{inq:upperbound}
\end{align} 
\end{lem}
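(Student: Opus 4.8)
The plan is to produce an explicit pair $(\X^{*}_{1},\X^{*}_{2})$ that extends the tight construction of Lemma~\ref{lem:example2} by adding $\mathrm{R}$ rows common to both matrices, and then to bound its Chernoff information uniformly over the parameter $\lambda$. I would work with the first $\mathcal{L}$ coordinates only. Let $\mathcal{U}^{(\mathcal{L})}_{\mathrm{even}},\mathcal{U}^{(\mathcal{L})}_{\mathrm{odd}}$ be the even- and odd-weight binary strings of length $\mathcal{L}$ (each of size $2^{\mathcal{L}-1}$), and for such a $v$ let $\bar v$ denote $v$ padded with $\mathrm{L}-\mathcal{L}$ zeros. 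Define $\X^{*}_{1}$ to have $n$ copies of $\bar v$ for each $v\in\mathcal{U}^{(\mathcal{L})}_{\mathrm{even}}$, $n+1$ copies of $\bar v$ for each $v\in\mathcal{U}^{(\mathcal{L})}_{\mathrm{odd}}$, and $\mathrm{R}$ further all-zero rows; define $\X^{*}_{2}$ by exchanging the even and odd multiplicities. Both matrices have $2^{\mathcal{L}-1}\mathrm{k}+\mathrm{R}=\mathrm{N}$ rows and are unequal as multisets (the odd strings have different multiplicities, and $\mathcal{L}\ge1$ so there is at least one). Columns $\mathcal{L}+1,\dots,\mathrm{L}$ are identical and all-zero in both matrices, so Lemma~\ref{lem:chernoffloss} removes them one by one with no loss, reducing to $\mathrm{N}\times\mathcal{L}$ matrices $\tilde{\X}^{*}_{1},\tilde{\X}^{*}_{2}$ with $\mathcal{C}(P_{\X^{*}_{1}},P_{\X^{*}_{2}})=\mathcal{C}(P_{\tilde{\X}^{*}_{1}},P_{\tilde{\X}^{*}_{2}})$.

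Second, I would peel off the common mass for \emph{every} $\lambda$. Write $P_{\tilde{\X}^{*}_{u}}=\frac{\mathrm{N}-\mathrm{R}}{\mathrm{N}}R_{u}+\frac{\mathrm{R}}{\mathrm{N}}Q_{0}$, where $Q_{0}$ is the channel output of the all-zero row and $R_{u}$ is the normalized mixture over the remaining $2^{\mathcal{L}-1}\mathrm{k}$ rows. For each $\lambda\in[0,1]$, Lemma~\ref{lem:basic} applied pointwise (with $a=\frac{\mathrm{N}-\mathrm{R}}{\mathrm{N}}R_{1}(x)$, $b=\frac{\mathrm{N}-\mathrm{R}}{\mathrm{N}}R_{2}(x)$, $c=d=\frac{\mathrm{R}}{\mathrm{N}}Q_{0}(x)$) gives $P_{\tilde{\X}^{*}_{1}}(x)^{\lambda}P_{\tilde{\X}^{*}_{2}}(x)^{1-\lambda}\ge\frac{\mathrm{N}-\mathrm{R}}{\mathrm{N}}R_{1}(x)^{\lambda}R_{2}(x)^{1-\lambda}+\frac{\mathrm{R}}{\mathrm{N}}Q_{0}(x)$; summing over $x$ and using $\sum_{x}Q_{0}(x)=1$ gives $f_{\lambda}(\tilde{\X}^{*}_{1},\tilde{\X}^{*}_{2})\ge\frac{\mathrm{N}-\mathrm{R}}{\mathrm{N}}\,g(\lambda)+\frac{\mathrm{R}}{\mathrm{N}}$, where $g(\lambda):=\sum_{x\in\{0,1\}^{\mathcal{L}}}R_{1}(x)^{\lambda}R_{2}(x)^{1-\lambda}$.

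Third, I would bound $g$ and simplify. Writing $|x|$ for the number of ones in $x$, the parity identity $\sum_{v\ \mathrm{even}}\prod_{\ell}p(x_{\ell}\mid v_{\ell})-\sum_{v\ \mathrm{odd}}\prod_{\ell}p(x_{\ell}\mid v_{\ell})=(-1)^{|x|}(1-2\mathrm{f})^{\mathcal{L}}$, combined with the two sums adding to $1$ (the same computation behind Lemma~\ref{lem:example2}), yields $R_{1}(x)=2^{-\mathcal{L}}(1-(-1)^{|x|}\rho)$ and $R_{2}(x)=2^{-\mathcal{L}}(1+(-1)^{|x|}\rho)$ with $\rho=(1-2\mathrm{f})^{\mathcal{L}}/\mathrm{k}$; thus $R_{1}(x),R_{2}(x)$ depend only on the parity of $|x|$, and $R_{1}(x)R_{2}(x)=2^{-2\mathcal{L}}(1-\rho^{2})$ is the same for every $x$. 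Pairing each even-weight $x$ with a distinct odd-weight one meets the hypothesis of Lemma~\ref{lem:symmetry} for $(R_{1},R_{2})$, so that lemma gives $\min_{\lambda}g(\lambda)=g(\tfrac12)=\sum_{x}\sqrt{R_{1}(x)R_{2}(x)}=\sqrt{1-\rho^{2}}$, hence $g(\lambda)\ge\sqrt{1-\rho^{2}}$ for all $\lambda$. Plugging this into the bound of the previous paragraph, taking $\min_{\lambda}$ and then $-\log$, and using $\frac{\mathrm{N}-\mathrm{R}}{\mathrm{N}}=\frac{2^{\mathcal{L}-1}\mathrm{k}}{\mathrm{N}}$ so that $\frac{\mathrm{N}-\mathrm{R}}{\mathrm{N}}\rho=\frac{2^{\mathcal{L}-1}(1-2\mathrm{f})^{\mathcal{L}}}{\mathrm{N}}=\epsilon_{\mathcal{L},\mathrm{N}}$, yields $\mathcal{C}(P_{\X^{*}_{1}},P_{\X^{*}_{2}})\le-\log(\frac{\mathrm{N}-\mathrm{R}}{\mathrm{N}}\sqrt{1-\rho^{2}}+\frac{\mathrm{R}}{\mathrm{N}})=-\log(\sqrt{(\frac{\mathrm{N}-\mathrm{R}}{\mathrm{N}})^{2}-\epsilon_{\mathcal{L},\mathrm{N}}^{2}}+\frac{\mathrm{R}}{\mathrm{N}})$, which is \eqref{inq:upperbound}.

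The construction is the only genuinely creative step; after it, the argument is forced. The point requiring care is that the common mass $Q_{0}$ has to be removed for every $\lambda$ via Lemma~\ref{lem:basic}, not just at $\lambda=\tfrac12$: for $\mathrm{R}>0$ the pair $(\tilde{\X}^{*}_{1},\tilde{\X}^{*}_{2})$ is not itself symmetric and its Chernoff minimizer need not be $\tfrac12$, so Lemma~\ref{lem:symmetry} must be applied to the genuinely symmetric structured part $(R_{1},R_{2})$ instead. The other delicate ingredient is the parity computation making $R_{1},R_{2}$ functions of $|x|\bmod 2$ only, which both forces the exponent $\mathcal{L}$ on $1-2\mathrm{f}$ and is what lets Lemma~\ref{lem:symmetry} pin $\min_{\lambda}g$ at $g(\tfrac12)$. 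One should also note $\mathrm{k}\ge|1-2\mathrm{f}|^{\mathcal{L}}$ (true since $\mathrm{k}\ge1$), which keeps the radicands nonnegative; and for $\mathrm{R}=0$ the construction collapses to that of Lemma~\ref{lem:example2} (with $\mathcal{L}$ in place of $\mathrm{L}$) and the bound meets the lower bound, explaining the equality case of Corollary~\ref{cor:equalityconditions}.
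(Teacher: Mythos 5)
Your proposal is correct and follows essentially the same route as the paper's proof of Lemma \ref{lem:nearoptimal}: the same construction (even/odd multiplicities $n$ and $n+1$ plus $\mathrm{R}$ common all-zero rows, with zero-padding columns removed via Lemma \ref{lem:chernoffloss}), the same pointwise application of Lemma \ref{lem:basic} to peel off the common $\frac{\mathrm{R}}{\mathrm{N}}$ mass uniformly in $\lambda$, and the same parity computation plus Lemma \ref{lem:symmetry} applied to the structured part to pin the minimum at $\lambda=\tfrac12$. Your explicit remark that Lemma \ref{lem:symmetry} cannot be applied to the full (asymmetric, $\mathrm{R}>0$) pair directly is a point the paper glosses over, but the underlying argument is the same.
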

For $\mathrm{R} = 0$, the upper bound in inequality \eqref{inq:upperbound} becomes equal to the lower bound $\tau_{2}$ we had on $\mathcal{C}$. Hence, \eqref{inq:upperbound} turns into equality. We conclude that Lemma \ref{lem:example2} is actually a special case of Lemma \ref{lem:nearoptimal}, where $R = 0$. 
\begin{proof}
Let $\mathbb{e}$ be the zero vector of length $\Ll$. First, we consider the case $\Ll \leq  \lfloor \log \mathrm{N} \rfloor + 1$, which yields $\mathcal{L} = \Ll$. 

Let $\X^*_{1}$ be the matrix whose rows consist of $n+1$ replicas of each sequence in $\mathcal{U}_{\mathrm{even}}$, $n$ replicas of each sequence in $\mathcal{U}_{\mathrm{odd}}$, and $\mathrm{R}$ replicas of $\mathbb{e}$. Similarly, let $\X^*_{2}$ be the matrix whose rows consist of $n$ replicas of each sequence in $\mathcal{U}_{\mathrm{even}}$, $n+1$ replicas of each sequence in $\mathcal{U}_{\mathrm{odd}}$, and $\mathrm{R}$ replicas of $\mathbb{e}$. In addition, for each sequence $\mathbb{s} \in \{ 0,1\}^{\Ll}$, define $n_{1}(\mathbb{s})$ to be the number of ones in $\mathbb{s}$. Moreover, let
\begin{align*}
& \mu(\mathbb{s}) = \begin{cases}1 &  \ n_{1}(\mathbb{s}) = \mathrm{even}\\0 &  \ n_{1}(\mathbb{s}) = \mathrm{odd}\end{cases}, \\
&\mathrm{E}_{\Ll} = \sum_{i = 1}^{\lfloor \N \rfloor} \left(\begin{array}{c}\Ll\\ 2i\end{array}\right) \f^{2i}(1-\f)^{\Ll - 2i}, \\
&\mathrm{O}_{\Ll} = \sum_{i = 1}^{\lceil \N \rceil - 1} \left(\begin{array}{c}\Ll\\ 2i+1\end{array}\right) \f^{2i+1}(1-\f)^{\Ll - (2i+1)}. 
\end{align*}
Then,
\begin{align}
P_{\X^*_{1}}(\mathbb{s}) = \frac{n + \mathrm{E}^{\mu(\mathbb{s})}_{\Ll}\mathrm{O}^{\overline{\mu(\mathbb{s})}}_{\Ll} + \mathrm{R}\f^{n_{1}(\mathbb{s})}(1-f)^{\Ll - n_{1}(\mathbb{s})}}{\N}, \nonumber \\
P_{\X^*_{2}}(\mathbb{s}) = \frac{n + \mathrm{E}^{\overline{\mu(\mathbb{s})}}_{\Ll}\mathrm{O}^{\mu(\mathbb{s})}_{\Ll} + \mathrm{R}\f^{n_{1}(\mathbb{s})}(1-f)^{\Ll - n_{1}(\mathbb{s})}}{\N}.
\end{align}
Note that $\mathrm{E}_{\Ll} - \mathrm{O}_{\Ll} = (1-2\f)^{\Ll}$. Hence, for $\zeta = (1 - 2\f)^\Ll$ we have
$$\mathrm{E}_{\Ll} = \frac{1 + \zeta}{2} , \mathrm{O}_{\Ll} = \frac{1 - \zeta}{2},$$
and,
\begin{align}
P_{\X^*_{1}}(\mathbb{s}) = \frac{n + (\frac{1+\zeta}{2})^{\mu(\mathbb{s})} (\frac{1-\zeta}{2})^{\overline{\mu(\mathbb{s})}}+ \mathrm{R}\f^{n_{1}(\mathbb{s})}(1-f)^{\Ll - n_{1}(\ell)}}{\N}, \nonumber \\
P_{\X^*_{2}}(\mathbb{s}) = \frac{n + (\frac{1+\zeta}{2})^{\overline{\mu(\mathbb{s})}}(\frac{1 - \zeta}{2})^{\mu(\mathbb{s})} + \mathrm{R}\f^{n_{1}(\mathbb{s})}(1-f)^{\Ll - n_{1}(\mathbb{s})}}{\N}. \nonumber
\end{align}
$P_{\X^*_{1}}(\mathbb{s})$ and $P_{\X^*_{2}}(\mathbb{s})$ only depend on the number of ones in $\mathbb{s}$. Therefore, by pairing each sequence which has odd number of ones, with a sequence having even number of ones, we observe that the symmetry condition of Lemma \ref{lem:symmetry} is satisfied. Hence, according to Lemma \ref{lem:symmetry},
\begin{align}
&\mathcal{C}(P_{\X^*_{1}} , P_{\X^*_{2}}) = - \! \min_{0 \leq \lambda \leq 1} \log\Big( \! \sum_{\mathbb{s} \in \{ 0 , 1 \} ^{\Ll}} P_{\X^*_{1}}(\mathbb{s})^{\lambda} P_{\X^*_{2}}(\mathbb{s})^{1 - \lambda}\Big) \! \stackrel{ \text{(f)}}{\leq} \nonumber \\ 
& -\min_{0 \leq \lambda \leq 1} \log \Big( \nonumber \\
&\!\!\! \!\!\  \sum_{\mathbb{s} \in \{ 0 , 1 \} ^{\Ll}} \!\! (\frac{n \! + \! (\frac{1 + \zeta}{2})^{\mu(\mathbb{s})}(\frac{1 - \zeta}{2})^{\overline{\mu(\mathbb{s})}}}{\N})^{\lambda}(\frac{n \! + \! (\frac{1 + \zeta}{2})^{\overline{\mu(\mathbb{s})}}(\frac{1 - \zeta}{2})^{\mu(\mathbb{s})}}{\N})^{1 - \lambda}  \nonumber \\
& + \sum_{\mathbb{s} \in \{ 0,1 \}^\Ll}\frac{\mathrm{R}\f^{n_{1}(\mathbb{s})}(1-f)^{\Ll - n_{1}(\mathbb{s})}}{\N} \Big)  = \label{eq:ghool} \\
& -\min_{0 \leq \lambda \leq 1} \log \Big( \nonumber \nonumber \\
&\!\!\! \!\!\  \sum_{\mathbb{s} \in \{ 0 , 1 \} ^{\Ll}} \!\! (\frac{n \! + \! (\frac{1 + \zeta}{2})^{\mu(\mathbb{s})}(\frac{1 - \zeta}{2})^{\overline{\mu(\mathbb{s})}}}{\N})^{\lambda}(\frac{n \! + \! (\frac{1 + \zeta}{2})^{\overline{\mu(\mathbb{s})}}(\frac{1 - \zeta}{2})^{\mu(\mathbb{s})}}{\N})^{1 - \lambda}  \nonumber  \\
& \ \ \ \  \ \ \ \ \ \ + \frac{\mathrm{R}}{\mathrm{N}} \Big),  \label{eq:ghool}  
\end{align}
where inequality $(\mathrm{f})$ follows from Lemma \ref{lem:basic}. Note that the terms $J_{1}(\mathbb{s}) = \frac{n \! + \! (\frac{1 + \zeta}{2})^{\mu(\mathbb{s})}(\frac{1 - \zeta}{2})^{\overline{\mu(\mathbb{s})}}}{\N}$ and $J_{2}(\mathbb{s}) = \frac{n \! + \! (\frac{1 + \zeta}{2})^{\overline{\mu(\mathbb{s})}}(\frac{1 - \zeta}{2})^{\mu(\mathbb{s})}}{\N}$ in \eqref{eq:ghool} only depend on the parity of number of ones in $\mathbb{s}$. Therefore, by pairing each sequence which has odd number of ones, with a sequence having even number of ones, we observe that the symmetry condition of Lemma \ref{lem:symmetry} is satisfied for distributions $J_{1} , J_{2}$. By Lemma \ref{lem:symmetry} we have
\begin{align}
\mathcal{C}(P_{\X^*_{1}} , & P_{\X^*_{2}})   \leq  -\log \Big( \sum_{\mathbb{s} \in \{ 0 , 1 \} ^{\Ll}} \!\! \sqrt{J_{1}(\mathbb{s})J_{2}(\mathbb{s}) } + \frac{\mathrm{R}}{\N}  \Big)  \nonumber \\
& = -\log(  2^{\Ll-1}\sqrt{\frac{2n + (1+\zeta)}{\N}. \frac{2n + (1-\zeta)}{\N}} \ + \ \frac{\mathrm{R}}{\N} )  \nonumber \\
& = -\log(  2^{\Ll-1}\sqrt{\frac{\mathrm{k} + \zeta}{\N}. \frac{\mathrm{k} -\zeta}{\N}} \ + \ \frac{\mathrm{R}}{\N} )  \nonumber \\
& = -\log( \sqrt{(\frac{2^{\Ll-1}.\mathrm{k}}{\N})^2 - \epsilon_{\mathcal{L} , \N}^2} \ + \ \frac{\mathrm{R}}{\N} )  \nonumber \\
&  = -\log( \sqrt{(\frac{\N-\mathrm{R}}{\N})^2 - \epsilon_{\mathcal{L} , \N}^2} \ + \ \frac{\mathrm{R}}{\N} ).\label{ineq:mainupperbound}
\end{align}

If $\Ll >  \lfloor \log \mathrm{N} \rfloor + 1$ was the case, then we define the first $\Ll - \mathcal{L}$ columns of both of $\X^*_{1} , \X^*_{2}$ to have all of their entries equal to zero. This way, according to Lemma \ref{lem:chernoffloss}, removing these columns does not incur any loss in \textsf{CI}. Thus, the above proof works for this case as well.
\end{proof}
Inequality \eqref{ineq:mainupperbound}, among with lower bound $\tau_{2}$, reveals the bounds in part 3 of Theorem \ref{thm:main}.

\begin{proof}[\textbf{Generalized Theorem}]
We don't bring the detailed proof for the generalized verison, as the sketch of proof is almost the same. 

If we name the flip probability of columns which are not removed during the reduction steps as $\{ \f_{\kappa_{i}} \}^{\mathrm{h}}_{i=1}$, then similar to inequality \eqref{eq:l1-distance}, we can drive
$$| p_{\mathrm{br},1} - p_{\mathrm{br},2} | \geq \frac{2^{\mathrm{h}-1}\prod_{i=1}^{\mathrm{h}}(1-2\f_{\kappa_{i}})}{\N},$$
which by the same procedures explained, leads to the lower bound for $\mathcal{C^*}(\N , \Ll , \mathrm{F})$ stated in Theorem \ref{thm:generalized}. In addition, using the necessary and sufficient condition obtained in Lemma \ref{lem:sufficientcondition}, and making use of the same tricks used in the proof of Lemma \ref{lem:nearoptimal}, we can define the pair $(\X^*_{1} , \X^*_{2})$ in such a way to obtain an upper bound on $\mathcal{C}^*(\X_{1} , \X_{2})$, which is very close to our lower bound, and consequently retrieve the near-tight upper bound mentioned in Theorem \ref{lem:nearoptimal} for $\mathcal{C}^*(\N , \Ll , \mathrm{F})$.
\end{proof}


\bibliographystyle{IEEEtran}
\bibliography{completeversion.bib}


\end{document}